%%%%%%%%%%%%%%%%%%%%%%% file template.tex %%%%%%%%%%%%%%%%%%%%%%%%%
%
% This is a general template file for the LaTeX package SVJour3
% for Springer journals.          Springer Heidelberg 2010/09/16
%
% Copy it to a new file with a new name and use it as the basis
% for your article. Delete % signs as needed.
%
% This template includes a few options for different layouts and
% content for various journals. Please consult a previous issue of
% your journal as needed.
%
%%%%%%%%%%%%%%%%%%%%%%%%%%%%%%%%%%%%%%%%%%%%%%%%%%%%%%%%%%%%%%%%%%%

%\documentclass{svjour3}                     % onecolumn (standard format)
%\documentclass[smallcondensed]{svjour3}     % onecolumn (ditto)
\documentclass[smallextended]{svjour3}       % onecolumn (second format)
\smartqed  % flush right qed marks, e.g. at end of proof
\usepackage{graphicx}
%
% \usepackage{mathptmx}      % use Times fonts if available on your TeX system
%
% insert here the call for the packages your document requires
%\usepackage{latexsym}
% etc.
%\usepackage{showkeys}
%\usepackage{refcheck}
\usepackage{booktabs}
\usepackage{diagbox}
\usepackage{amssymb}
\usepackage{amsmath}
\usepackage{float}
\usepackage{color}
\usepackage{bm}
\renewenvironment{proof}{\hspace{-\parindent}{Proof.}}{\hspace*{\fill}$\square$\qquad}

%
% please place your own definitions here and don't use \def but
% \newcommand{}{}
%
% Insert the name of "your journal" with
\journalname{Journal of Scientific Computing}
\begin{document}

\title{Splitting finite element approximations for  quasi-static electroporoelasticity equations}   
\thanks{The research is supported by National Natural Science Foundation of China grants 12171199 and 22341302; Jilin Provincial Department of Science and Technology grant 20240301017GX; National Key Research and Development Program of China grants 2020YFA0713602 and 2023YFA1008803; and Key Laboratory of Symbolic Computation and Knowledge Engineering of Ministry of Education of China housed at Jilin University.}
%\subtitle{Do you have a subtitle?\\ If so, write it here}

\titlerunning{Well-posedness \& FEM to electroporoelasticity PDEs}        % if too long for running head

\author{Xuan Liu \and
        Yongkui Zou \and
        Ran Zhang \and
        Yanzhao Cao\and 
        Amnon J. Meir 
}

\authorrunning{X. Liu, Y. Zou, R. Zhang, Y. Cao and A. J. Meir} % if too long for running head

\institute{Xuan Liu \at
              School of Mathematics, Jilin University, Changchun 130012, People’s Republic of China \\
              \email{liuxuan23@mails.jlu.edu.cn}           %  \\
%             \emph{Present address:} of F. Author  %  if needed
           \and
           Yongkui Zou \at
              School of Mathematics, Jilin University, Changchun 130012, People’s Republic of China \\
              \email{zouyk@jlu.edu.cn}
           \and
           Ran Zhang \at
              School of Mathematics, Jilin University, Changchun 130012, People’s Republic of China \\
              \email{zhangran@jlu.edu.cn}
           \and
           Yanzhao Cao \at
           Department of Mathematics and Statistics, Auburn University, Auburn, AL 36849, USA \\
           \email{yzc0009@auburn.edu}
            \and
           Amnon J Meir  \at
           Department of Mathematics, Southern Methodist University, Dallas, TX 75205, USA \\
           \email{ajmeir@smu.edu}
}

\date{Received: date / Accepted: date}
% The correct dates will be entered by the editor

\maketitle

\begin{abstract}

The electroporoelasticity model, which couples Maxwell's equations with Biot's equations, plays a critical role in applications such as water conservancy exploration, earthquake early warning, and various other fields. This work focuses on investigating its well-posedness and analyzing error estimates for a splitting backward Euler finite element method. We first define a weak solution consistent with the finite element framework. Then, we prove the uniqueness and existence of such a solution using the Galerkin method and derive a priori estimates for high-order regularity. Using a splitting technique, we define an approximate splitting solution and analyze its convergence order. Next, we apply Nédélec’s curl-conforming finite elements, Lagrange elements, and the backward Euler method to construct a fully discretized scheme. We demonstrate the stability of the splitting numerical solution and provide error estimates for its convergence order in both temporal and spatial variables. Finally, we present numerical experiments to validate the theoretical results, showing that our method significantly reduces computational complexity compared to the classical finite element method.

\keywords{quasi-static electroporoelasticity equations \and well-posedness \and splitting techniquee \and finite element method \and error estimates}
% \PACS{PACS code1 \and PACS code2 \and more}
\subclass{65M15 \and 35Q60 \and 76S05 \and 65M60}
\end{abstract}

\section{Introduction}\label{s3.1}
The seismoelectric coupling (including seismoelectric effect and electroseismic effect) \cite{Fr2005} describes the conversion law of seismic waves and electromagnetic fields occurring in elastic porous media \cite{Co2004}. Pride \cite{Pr1994} derived a theoretical model (i.e., electroporoelasticity model) by coupling Maxwell's equations \cite{BaLi2021,Li2007,LiCh2006} and Biot's equations \cite{Bi1962,CaChMe2013,CaChMe2014,ZhZhCaMe2020} to describe both electroseismic and seismoelectric phenomena in isotropic, homogeneous and electrolyte-saturated porous media (e.g., sedimentary rocks). Later on, Pride and Haartsen \cite{PrHa1996} generalized the model to anisotropic case. Seismoelectric coupling plays an important role in many fields such as oil-gas exploration \cite{WhZh2006}, water conservancy exploration \cite{KuMuRi2006}, earthquake early warning \cite{PrGa2005,PrMoGa2004}, environmental protection \cite{MiSoMo2007} and some other areas in geophysics \cite{BePe1998,Wh2005}, etc. Specifically, seismoelectric coupling waves possess high spatial resolution and are sensitive to the change of resistivity such that electroseismic prospecting is more effective to locate and identify oil and gas reservoirs than seismic exploration \cite{DoSa1960} or electromagnetic exploration \cite{Na1988}. Besides, electromagnetic waves have a much faster propagation speed than seismic waves, and they would travel to the ground earlier than seismic waves. If abnormal electromagnetic signals can be detected in advance, an earthquake warning may be issued.

Several studies have been conducted on numerical approximations of the electroporoelasticity equations. Haartsen and Pride \cite{HaPr1997} determined the full-waveform electroseismic response to a point source in a stratified porous medium. Haines and Pride \cite{HaPr2006} developed a finite difference algorithm to simulate seismoelectric coupling in arbitrarily heterogeneous porous media. Santos \cite{Sa2011} introduced finite element approximations and procedures for two electrokinetic coupling modes in a two-dimensional isotropic, fluid-saturated, poroviscoelastic domain. McGhee and Picard \cite{McPi2011-1,McPi2011-2} examined electroseismic waves in anisotropic, inhomogeneous and time-shift invariant media, establishing the well-posedness of an electroseismic model. Santos et al.\ \cite{SaZyGa2012} focused on an electroseismic model that neglects seismoelectric phenomenon, demonstrating the model's unique solution and providing a priori error estimates for a Galerkin semi-discretized procedure, along with an analysis of the unconditional stability of an implicit fully discretized finite element method (FEM). Hu and Meir \cite{HuMe2022} researched the standard finite element approximation to quasi-static electroporoelasticity equations.

The purpose of this paper is to establish a well-posedness theory for the quasi-static electroporoelasticity model and investigate the convergence order of its splitting backward Euler FEM approximation. In \cite{HuMe2023}, Hu and Meir studied a quasi-static electroporoelasticity model and proved the existence of a unique weak solution that is essentially bounded in time and square integrable in space. However, the test function used in their definition requires higher regularity and depends on the temporal variable, which makes it inconvenient for numerical analysis within this framework.

In this work, we first introduce a new definition of the solution to the quasi-static electroporoelasticity equations, which is consistent with the standard weak solution. This solution is $H^{1}$-smooth in all variables, and the test function is independent of the temporal variable. We then prove the uniqueness and existence of such a solution using the Galerkin method. Furthermore, we derive a priori estimates of higher-order regularity for the solution, which are crucial for performing error analysis of the numerical approximation.

The electroporoelasticity model consists of 10 physical unknowns in a three-dimensional spatial domain, including the electric and magnetic fields, the medium displacement, and the pressure, making it a problem of high numerical complexity. To address this challenge, we construct a splitting approximate solution using a splitting technique \cite{ZhZoChCa2024,ZhZoChZhCa2022} and demonstrate that this approximation converges to the exact solution with first-order accuracy in time. Furthermore, we employ N'{e}d'{e}lec's curl-conforming elements \cite{Mo1991,Mo2003,Ne1980} and Lagrange elements, combined with the backward Euler method, to establish a fully discretized scheme. We prove that the splitting numerical approximation converges to the exact solution with a convergence rate of $\mathcal{O}(\tau + h)$. Our comprehensive analysis, supported by numerical experiments, shows that the splitting method is several times faster than the standard finite element method.

The rest of this paper is organized as follows. In section \ref{s3.2}, we introduce the quasi-static electroporoelasticity equations and present a new definition of a weak solution. In section \ref{s3.3}, we prove the uniqueness and existence of the weak solution by Galerkin method. In section \ref{s3.4}, we construct a splitting approximate solution and obtain its convergence order. In section \ref{s3.5}, we set up a splitting backward Euler FEM scheme and derive error estimates. In section \ref{s3.6}, we carry out numerical experiments to demonstrate the theoretical results and the efficiency of the proposed algorithm.

\section{Electroporoelasticity equations and weak solution}\label{s3.2}
Let $T>0$ and $\Omega\subset \mathbb{R}^{3}$ be an open, bounded, simply connected and convex domain with Lipschitz continuous boundary $\partial\Omega$. Denote by $\mathbf{n}$ the unit outward normal vector to $\partial\Omega$. We consider a model of quasi-static electroporoelasticity equations in the temporal and spatial domain $[0,T]\times \Omega$: 
\begin{equation}\label{e3.2.1}
	\begin{aligned}
		& \epsilon\frac{\partial}{\partial t}\mathbf{E} + \sigma \mathbf{E} - \nabla\times \mathbf{H} - L\nabla p = \mathbf{j},
		\\
		& \mu\frac{\partial}{\partial t}\mathbf{H} + \nabla\times \mathbf{E} = \mathbf{0},
		\\
		& -\lambda_{c}\nabla(\nabla\cdot \mathbf{u}) - G\Delta \mathbf{u} + \alpha\nabla p = \mathbf{f},
		\\
		& \frac{\partial}{\partial t}(c_{0}p + \alpha\nabla\cdot \mathbf{u}) - \kappa\Delta p + L\nabla\cdot \mathbf{E} = g,
	\end{aligned}
\end{equation}
where  $\mathbf{E}$ is the electric field, $\mathbf{H}$ is the magnetic field, $\mathbf{u}$ is the displacement of the solid matrix and $p$ is the pressure in the fluid.  We assume the following initial and boundary conditions. For  $\mathbf{x}\in\Omega$, 
\begin{equation}\label{e3.2.2}
	\mathbf{E}(0,\mathbf{x}) \!=\! \mathbf{E}_{0}(\mathbf{x}), 
x	\quad \mathbf{H}(0,\mathbf{x}) \!=\! \mathbf{H}_{0}(\mathbf{x}),
	\quad \mathbf{u}(0,\mathbf{x}) \!=\! \mathbf{u}_{0}(\mathbf{x}), 
	\quad p(0,\mathbf{x}) \!=\! p_{0}(\mathbf{x}),
\end{equation}
and for $(t,\mathbf{x})\in[0,T]\times\partial \Omega$, 
\begin{equation}\label{e3.2.3}
	\mathbf{E}(t,\mathbf{x})\times \mathbf{n} = \mathbf{0},
	\quad
	\mathbf{u}(t,\mathbf{x})=\mathbf{0}, 
	\quad 
	p(t,\mathbf{x}) = 0.
\end{equation}
 The  physical significance of parameters $\epsilon$, $\mu$, $\sigma$, $L$, $\lambda_{c}$, $G$, $\alpha$, $c_{0}$ and $\kappa$ can be found in \cite{HuMe2022,HuMe2023}. For simplicity, we assume that these parameters are positive constants and $\mathbf{f}=\mathbf{0}$. Throughout this paper, we also denote $\mathbf{E}(t,\mathbf{x})$ by either $\mathbf{E}(t)$ or $\mathbf{E}$, etc.\ when no confusion occurs.

Let $L^{2}(\Omega)$ be the square integrable function space with usual inner product $(\cdot, \cdot)$ and norm $\|\cdot\|$. Throughout this paper, we will use bold italic letters to denote spaces of three-dimensional vector fields ($\bm{L}^{2}(\Omega)=(L^{2}(\Omega))^{3}$, etc.). We emphasize that $\|\cdot\|$ is also used to denote the norm in $\bm{L}^{2}(\Omega)$ when no confusion occurs. Likewise, $L^{2}(\partial \Omega)$ denotes the Hilbert space with inner product $\langle\cdot, \cdot\rangle_{\partial\Omega}$. For any integer $r>0$, denote by $H^{r}:=H^{r}(\Omega)$ the standard Sobolev space with norm $\|\cdot\|_{r}$ and $H^{r}_{0}=\{v\in H^{r}: D^{k}v|_{\partial\Omega}=0, \, |k|<r\}$ \cite{CaHoLi2020}. Let $C^{r}(\bar\Omega)$ be a space consisting of $r$th continuously differentiable functions on $\bar\Omega$ and $C^{r}_{0}(\Omega)$ be its subspace with compact support inside $\Omega$.
Define three spaces
\begin{align*}
	\bm{H}(div) &= \{ \mathbf{v}: \mathbf{v}\in \bm{L}^{2}(\Omega), \ \nabla\cdot \mathbf{v}\in L^{2}(\Omega) \},
	\\
	\bm{H}(\mathbf{curl}) &= \{ \mathbf{v}: \mathbf{v}\in \bm{L}^{2}(\Omega), \ \nabla\times \mathbf{v}\in \bm{L}^{2}(\Omega) \},
	\\
	\bm{H}_{0}(\mathbf{curl}) &= \{ \mathbf{v}: \mathbf{v}\in \bm{H}(\mathbf{curl}), \ \mathbf{v}\times \mathbf{n}=\mathbf{0} \ \text{on} \, \partial\Omega \}.
\end{align*}
Denote
\begin{align*}
	\bar{\mathbb{S}} :=&\ \bm{H}_{0}(\mathbf{curl}) \times \bm{H}(\mathbf{curl}) \times \bm{H}^{1}_{0} \times H^{1}_{0},
	\\
	\bar{\mathbb{H}} :=&\ \bm{L}^{2}(\Omega) \times \bm{L}^{2}(\Omega) \times \bm{L}^{2}(\Omega) \times L^{2}(\Omega),
	\\
	\mathbb{S} :=&\ \bm{H}_{0}(\mathbf{curl}) \times \bm{H}(\mathbf{curl}) \times H^{1}_{0},
	\\
	\mathbb{H} :=&\ \bm{L}^{2}(\Omega) \times \bm{L}^{2}(\Omega) \times L^{2}(\Omega).
\end{align*}

The variational form of \eqref{e3.2.1}--\eqref{e3.2.3} is to seek $(\mathbf{E},\mathbf{H},\mathbf{u},p)\in L^{2}(0,T;\bar{\mathbb{S}}) \cap H^{1}(0,T;\bar{\mathbb{H}})$ such that it satisfies \eqref{e3.2.2} and
\begin{equation}\label{e3.2.4}
	\begin{aligned}
		& \big(\epsilon\frac{\partial}{\partial t}\mathbf{E}(t), \mathbf{D}\big) + \big(\sigma \mathbf{E}(t), \mathbf{D}\big) - \big(\nabla\times \mathbf{H}(t), \mathbf{D}\big) - \big(L\nabla p(t), \mathbf{D}\big) = \big(\mathbf{j}(t), \mathbf{D}\big),
		\\
		& \big(\mu\frac{\partial}{\partial t}\mathbf{H}(t), \mathbf{B}\big) + \big(\nabla\times \mathbf{E}(t), \mathbf{B}\big) = 0,
		\\
		& \big(\lambda_{c}\nabla\cdot \mathbf{u}(t), \nabla\cdot \mathbf{v}\big) + \big(G\nabla \mathbf{u}(t), \nabla \mathbf{v}\big) - \big(p(t), \alpha \nabla\cdot \mathbf{v}\big) = 0,
		\\
		& \big(\frac{\partial}{\partial t}(c_{0}p(t) + \alpha\nabla\cdot \mathbf{u}(t)), q\big) + \big(\kappa\nabla p(t), \nabla q\big) - \big(L\mathbf{E}(t), \nabla q\big) = \big(g(t), q\big),
	\end{aligned}
\end{equation}
for all $t\in[0, T]$ and $(\mathbf{D},\mathbf{B},\mathbf{v},q)\in \bar{\mathbb{S}}$.

In comparison with the standard weak solution, Hu and Meir \cite{HuMe2023} defined a much weaker one which is only essentially bounded in time and square integrable in space. However, the test function is required to depend on temporal variable, resulting that most numerical methods are unable to be applied to approximate such solution. The main purpose of this work is to set up a new well-posedness theory and construct a fast numerical algorithm for quasi-static electroporoelasticity equations. We first provide a new definition of solution to \eqref{e3.2.1}--\eqref{e3.2.3} such that it is consistent with the framework of FEM. Then, we prove the uniqueness and existence and derive high order regularity estimates. Finally, we apply splitting technique and FEM to approximate \eqref{e3.2.4} and analyze its convergence order.

\begin{definition}\label{d3.2.1}
 $(\mathbf{E},\mathbf{H},\mathbf{u},p)\in L^{2}(0,T;\bar{\mathbb{S}}) \cap H^{1}(0,T;\bar{\mathbb{H}})$ is called a weak solution to \eqref{e3.2.1}--\eqref{e3.2.3} if it satisfies initial value conditions \eqref{e3.2.2} and variational equation \eqref{e3.2.4}.
\end{definition}

To prove the existence of the weak soluiton, we first eliminate  $\mathbf{u}$ in \eqref{e3.2.4} by expressing it with respect to $p$.  To this end, we define a bilinear form by
\begin{equation}\label{e3.2.5}
	a(\mathbf{w},\mathbf{v})=(\lambda_{c}\nabla\cdot \mathbf{w}, \nabla\cdot \mathbf{v}) + (G\nabla \mathbf{w}, \nabla \mathbf{v}), \quad \forall \, \mathbf{w},\mathbf{v}\in \bm{H}^{1}_{0},
\end{equation}
which satisfies
\begin{equation*}
	a(\mathbf{w},\mathbf{v})=a(\mathbf{v},\mathbf{w}), \quad \beta_{1} \|\mathbf{v}\|_{1}^{2} \leq |a(\mathbf{v},\mathbf{v})|, \quad |a(\mathbf{w},\mathbf{v})| \leq \beta_{2} \|\mathbf{w}\|_{1} \|\mathbf{v}\|_{1},
\end{equation*}
where $0< \beta_{1} \leq \beta_{2}$ are two constants. Then, for any $p\in L^{2}(\Omega)$, the following equation has a unique solution $\mathbf{u}\in \bm{H}^{1}_{0}$ due to Lax-Milgram theorem \cite{GiRa1986}. 
\begin{equation}\label{e3.2.6}
	a(\mathbf{u},\mathbf{v}) = (p, \alpha\nabla\cdot \mathbf{v}), \quad \forall \, \mathbf{v}\in \bm{H}^{1}_{0}.
\end{equation}
Thus, we can define an associated operator $\mathcal{B}: L^{2}(\Omega)\rightarrow L^{2}(\Omega)$ by $\mathcal{B}p = \alpha\nabla\cdot \mathbf{u}$. According to \cite{CaChMe2013,HuMe2023,Ph2005}, both of $\mathcal{B}$ and $c_{0}+\mathcal{B}$ are linear, bounded, self-adjoint and monotone $((\mathcal{B}p,p)\geq 0)$ operators, which also have bounded inverses and satisfy $\mathcal{B}\frac{\partial}{\partial t}=\frac{\partial}{\partial t}\mathcal{B}$. Furthermore,
\begin{equation}\label{e3.2.7}
	\|(c_{0}+\mathcal{B})p\|^{2} = (c_{0} p,c_{0} p) + 2(c_{0} p,\mathcal{B} p) +(\mathcal{B} p, \mathcal{B} p) \geq c_{0}^{2} \|p\|^{2}.
\end{equation}

\begin{lemma}\label{l3.2.2}
	For any $p\in H^{1}_{0}$ and $i=1,2,3$, there holds that $\mathcal{B}\frac{\partial}{\partial x_{i}}p = \frac{\partial}{\partial x_{i}}\mathcal{B}p$.
\end{lemma}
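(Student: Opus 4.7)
The plan is to compare the defining weak problems for $\mathcal{B}p$ and $\mathcal{B}(\partial_i p)$ via two successive integrations by parts. Writing $\mathbf{u}\in\bm{H}^{1}_{0}$ for the solution of $a(\mathbf{u},\mathbf{v})=(p,\alpha\nabla\cdot\mathbf{v})$ and $\mathbf{w}\in\bm{H}^{1}_{0}$ for the solution of $a(\mathbf{w},\mathbf{v})=(\partial_i p,\alpha\nabla\cdot\mathbf{v})$, the claim $\mathcal{B}(\partial_i p)=\partial_i\mathcal{B}p$ reduces to $\nabla\cdot\mathbf{w}=\nabla\cdot(\partial_i\mathbf{u})$ in $L^{2}(\Omega)$. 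A preliminary observation is that, since $p\in H^{1}_{0}$, one can rewrite the right-hand side of the equation for $\mathbf{u}$ as $-(\alpha\nabla p,\mathbf{v})$ with $\alpha\nabla p\in\bm{L}^{2}(\Omega)$, so elliptic regularity on the convex Lipschitz domain $\Omega$ upgrades $\mathbf{u}$ to $\bm{H}^{2}\cap\bm{H}^{1}_{0}$; hence $\partial_i\mathbf{u}\in\bm{H}^{1}(\Omega)$, $\mathcal{B}p\in H^{1}(\Omega)$, and both $\mathcal{B}(\partial_i p)$ and $\partial_i\mathcal{B}p$ are well-defined in $L^{2}(\Omega)$.

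Next I would test against an arbitrary $\mathbf{v}\in\bm{C}^{\infty}_{c}(\Omega)$, noting that $\partial_i\mathbf{v}\in\bm{C}^{\infty}_{c}(\Omega)\subset\bm{H}^{1}_{0}$. Because $p$ has vanishing boundary trace, integration by parts gives $(\partial_i p,\alpha\nabla\cdot\mathbf{v})=-(p,\alpha\nabla\cdot\partial_i\mathbf{v})$, and substituting $\partial_i\mathbf{v}$ into the equation defining $\mathbf{u}$ rewrites this quantity as $-a(\mathbf{u},\partial_i\mathbf{v})$. A second integration by parts, legitimate thanks to $\mathbf{u}\in\bm{H}^{2}$ and the compact support of $\mathbf{v}$, transfers the derivative $\partial_i$ off $\mathbf{v}$ and onto $\mathbf{u}$ in each of the two terms building $a(\mathbf{u},\partial_i\mathbf{v})$, so that $-a(\mathbf{u},\partial_i\mathbf{v})=a(\partial_i\mathbf{u},\mathbf{v})$. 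Combining the two steps with the definition of $\mathbf{w}$ yields $a(\partial_i\mathbf{u}-\mathbf{w},\mathbf{v})=0$ for every $\mathbf{v}\in\bm{C}^{\infty}_{c}(\Omega)$, and by density this identity extends to all $\mathbf{v}\in\bm{H}^{1}_{0}$.

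The hard part will be extracting the scalar identity $\nabla\cdot\mathbf{w}=\nabla\cdot(\partial_i\mathbf{u})$ from the weak statement $a(\partial_i\mathbf{u}-\mathbf{w},\mathbf{v})=0$. The obstacle is that $\mathbf{u}|_{\partial\Omega}=\mathbf{0}$ only forces the tangential part of $\nabla\mathbf{u}$ to vanish, so $\partial_i\mathbf{u}|_{\partial\Omega}=n_i(\partial_n\mathbf{u})|_{\partial\Omega}$ is generically nonzero and the difference $\mathbf{z}:=\partial_i\mathbf{u}-\mathbf{w}\in\bm{H}^{1}(\Omega)$ need not lie in $\bm{H}^{1}_{0}$; consequently one cannot simply test the coercivity of $a$ against $\mathbf{z}$ to force $\mathbf{z}=\mathbf{0}$. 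The route I would take is to pass to the strong form $-\lambda_{c}\nabla(\nabla\cdot\mathbf{z})-G\Delta\mathbf{z}=\mathbf{0}$ in $\Omega$ (which follows from the weak identity together with $\mathbf{u}\in\bm{H}^{2}$), and take its divergence to deduce that $\nabla\cdot\mathbf{z}$ is harmonic in $L^{2}(\Omega)$. A careful analysis of boundary traces, combining the zero trace of $\mathbf{w}$ with the explicit representation $\partial_i\mathbf{u}|_{\partial\Omega}=n_i(\partial_n\mathbf{u})|_{\partial\Omega}$, is then required to show that $\nabla\cdot\mathbf{z}$ carries zero Dirichlet data; uniqueness for the Laplace equation then gives $\nabla\cdot\mathbf{z}=0$ in $L^{2}(\Omega)$, and multiplying by $\alpha$ yields the commutation $\mathcal{B}(\partial_i p)=\partial_i\mathcal{B}p$.
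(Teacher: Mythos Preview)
Your first two paragraphs track the paper's argument: both derive $a(\partial_{i}\mathbf{u}-\mathbf{w},\mathbf{v})=0$ for all $\mathbf{v}$ in a dense subspace of $\bm{H}^{1}_{0}$ (the paper tests against $\bm{H}^{2}_{0}$, you against $\bm{C}^{\infty}_{c}$) and extend to $\bm{H}^{1}_{0}$ by density. At that point the paper simply concludes $\tilde{\mathbf{u}}_{i}=\partial_{i}\mathbf{u}$ in $\bm{H}^{1}_{0}$ and reads off $\mathcal{B}\partial_{i}p=\alpha\nabla\cdot\tilde{\mathbf{u}}_{i}=\alpha\partial_{i}\nabla\cdot\mathbf{u}=\partial_{i}\mathcal{B}p$. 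You are right that this final identification tacitly presumes $\partial_{i}\mathbf{u}\in\bm{H}^{1}_{0}$ (so that one may set $\mathbf{v}=\tilde{\mathbf{u}}_{i}-\partial_{i}\mathbf{u}$ and invoke coercivity); the paper does not address this point.

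Your proposed detour, however, has its own genuine gap. After showing that $\nabla\cdot\mathbf{z}$ is harmonic, the decisive step---proving $(\nabla\cdot\mathbf{z})|_{\partial\Omega}=0$---is deferred to an unspecified ``careful analysis of boundary traces''. But the information you possess, $\mathbf{z}|_{\partial\Omega}=n_{i}(\partial_{n}\mathbf{u})|_{\partial\Omega}$, constrains only the trace of $\mathbf{z}$ itself; the divergence $\nabla\cdot\mathbf{z}$ involves first derivatives of $\mathbf{z}$ (in particular normal derivatives at the boundary) and a priori lies only in $L^{2}(\Omega)$, so its boundary trace is not even defined without additional regularity. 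There is no evident mechanism by which the particular Dirichlet data of $\mathbf{z}$, together with the homogeneous interior equation, forces $\nabla\cdot\mathbf{z}$ to vanish on $\partial\Omega$ for a general convex Lipschitz domain. In short, you correctly locate a subtle point that the paper's one-line conclusion glosses over, but your alternative route leaves that same point unresolved rather than closing it.
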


\begin{proof}
	According to the solvability of \eqref{e3.2.6}, for any $p\in H^{1}_{0}$, there exists a unique $\mathbf{u}\in \bm{H}^{2}\cap \bm{H}^{1}_{0}$ such that $\mathcal{B}p = \alpha\nabla\cdot \mathbf{u}$. Similarly, there exists a unique $\tilde{\mathbf{u}}_{i}\in \bm{H}^{1}_{0}$ such that $\mathcal{B}\frac{\partial}{\partial x_{i}}p = \alpha\nabla\cdot \tilde{\mathbf{u}}_{i}$. We will verify $\tilde{\mathbf{u}}_{i} = \frac{\partial}{\partial x_{i}}\mathbf{u}$. In fact, for any $\mathbf{v}\in \bm{H}^{2}_{0}$, noticing \eqref{e3.2.5}, \eqref{e3.2.6} and applying Green's formula yield
	\begin{align*}
		& a(\tilde{\mathbf{u}}_{i},\mathbf{v})
		= (\frac{\partial}{\partial x_{i}}p,\alpha\nabla\cdot \mathbf{v})
		= -(p,\alpha\nabla\cdot \frac{\partial}{\partial x_{i}}\mathbf{v})
		= -a(\mathbf{u},\frac{\partial}{\partial x_{i}}\mathbf{v})
		\\
		=& -\!(\lambda_{c}\nabla\cdot \mathbf{u}, \nabla\cdot \frac{\partial}{\partial x_{i}}\mathbf{v})\!-\!(G\nabla \mathbf{u}, \nabla \frac{\partial}{\partial x_{i}}\mathbf{v})
		\\
		=& -(\lambda_{c}\nabla\cdot \mathbf{u}, \frac{\partial}{\partial x_{i}}\nabla\cdot \mathbf{v})\!-\!(G\nabla \mathbf{u}, \frac{\partial}{\partial x_{i}}\nabla \mathbf{v})
		\\
		=& (\lambda_{c}\nabla\cdot \frac{\partial}{\partial x_{i}}\mathbf{u}, \nabla\cdot \mathbf{v}) + (G\nabla\frac{\partial}{\partial x_{i}} \mathbf{u}, \nabla \mathbf{v})
		= a(\frac{\partial}{\partial x_{i}}\mathbf{u},\mathbf{v}).
	\end{align*}
	Since $\bm{H}^{2}_{0}$ is dense in $\bm{H}^{1}_{0}$, we obtain $\tilde{\mathbf{u}}_{i} = \frac{\partial}{\partial x_{i}}\mathbf{u}$ in $\bm{H}^{1}_{0}$. Hence,
	\begin{equation*}
		\mathcal{B}\frac{\partial}{\partial x_{i}}p
		= \alpha\nabla\cdot \tilde{\mathbf{u}}_{i}
		= \alpha\nabla\cdot \frac{\partial}{\partial x_{i}}\mathbf{u}
		=\frac{\partial}{\partial x_{i}}\alpha\nabla\cdot \mathbf{u}
		=\frac{\partial}{\partial x_{i}}\mathcal{B}p.
	\end{equation*}
	This completes the proof.
\end{proof}

With the help of the operator $\mathcal{B}$, solving \eqref{e3.2.4} and \eqref{e3.2.2} is equivalent to first searching $\mathbf{U}:=(\mathbf{E},\mathbf{H},p)\in L^{2}(0,T;\mathbb{S}) \cap H^{1}(0,T;\mathbb{H})$ with $\mathbf{U}(0)=\mathbf{U}_{0}:=(\mathbf{E}_{0},\mathbf{H}_{0},p_{0})$ such that for all $t\in[0, T]$ and $(\mathbf{D},\mathbf{B},q)\in \mathbb{S}$,
\begin{equation}\label{e3.2.8}
	\begin{aligned}
		& \big(\epsilon\frac{\partial}{\partial t}\mathbf{E}(t), \mathbf{D}\big) + \big(\sigma \mathbf{E}(t), \mathbf{D}\big) - \big(\nabla\times \mathbf{H}(t), \mathbf{D}\big) - \big(L\nabla p(t), \mathbf{D}\big) = \big(\mathbf{j}(t), \mathbf{D}\big),
		\\
		& \big(\mu\frac{\partial}{\partial t}\mathbf{H}(t), \mathbf{B}\big) + \big(\nabla\times \mathbf{E}(t), \mathbf{B}\big) = 0,
		\\
		& \big(\frac{\partial}{\partial t}(c_{0} + \mathcal{B})p(t), q\big) + \big(\kappa\nabla p(t), \nabla q\big) - \big(L\mathbf{E}(t), \nabla q\big) = \big(g(t), q\big),
	\end{aligned}
\end{equation}
and then determining $\mathbf{u}$ in terms of \eqref{e3.2.6}. 
Therefore, we only need to study the well-posedness of \eqref{e3.2.8} and \eqref{e3.2.2}. We assume

\begin{itemize}
	\item [\bf{(H1)}] $0< L< \sqrt{\sigma\kappa}$.
	\item [\bf{(H2)}] $\mathbf{j}\in H^{1}(0,T;\bm{L}^{2}(\Omega))$ and $g\in H^{1}(0,T;L^{2}(\Omega))$.
	\item [\bf{(H3)}] $\mathbf{E}_{0}\in \bm{H}_{0}(\mathbf{curl})\cap \bm{H}(div)$, $\mathbf{H}_{0}\in \bm{H}(\mathbf{curl})$ and $p_{0}\in H^{2}\cap H^{1}_{0}$.
\end{itemize}

According to (H1), there exist a constant $C>0$ such that for all $\mathbf{E}\in \bm{L}^{2}(\Omega)$ and $p\in H^{1}_{0}$, there holds that
\begin{equation}\label{e3.2.9}
	(\sigma \mathbf{E},\mathbf{E}) - 2(L\nabla p,\mathbf{E}) + (\kappa\nabla p,\nabla p) \geq C(\|\mathbf{E}\|^{2} + \|\nabla p\|^{2}) \geq 0.
\end{equation}

\section{Uniqueness and existence of weak solution}\label{s3.3}
In this section, we establish the uniqueness and existence of weak solution to \eqref{e3.2.1}--\eqref{e3.2.3} and derive its regularity estimates.

\subsection{Uniqueness}\label{s3.3.1}
We begin with an a priori estimate.

\begin{lemma}\label{l3.3.1}
	Assume (H1)--(H3) and that $(\mathbf{E},\mathbf{H},\mathbf{u},p)$ is a weak solution to \eqref{e3.2.1}--\eqref{e3.2.3}. Then there exists a constant $C=C(\epsilon,\mu,c_{0},\lambda_{c},\alpha)>0$ such that for all $t\in [0,T]$,
	\begin{align*}
		&\ \|\mathbf{E}(t)\|^{2} + \|\mathbf{H}(t)\|^{2} + \|\mathbf{u}(t)\|_{1}^{2} + \|p(t)\|^{2}
		\\
		\leq&\ C \Big( \|\mathbf{E}_{0}\|^{2} + \|\mathbf{H}_{0}\|^{2} + \|p_{0}\|^{2} + \int_{0}^{t}{\|\mathbf{j}(\theta)\|^{2}}{\, \mathrm{d}\theta} + \int_{0}^{t}{\|g(\theta)\|^{2}}{\, \mathrm{d}\theta} \Big).
	\end{align*}
\end{lemma}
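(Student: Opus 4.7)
The plan is to derive an energy identity by testing the reduced system \eqref{e3.2.8} with the natural choices $(\mathbf{D},\mathbf{B},q)=(\mathbf{E}(t),\mathbf{H}(t),p(t))$, then bound $\|\mathbf{u}\|_1$ a posteriori via the elliptic problem \eqref{e3.2.6}. Note that all three test functions lie in the appropriate spaces $\bm{H}_0(\mathbf{curl})$, $\bm{H}(\mathbf{curl})$, $H^1_0$ for almost every $t$, so this substitution is admissible.

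First I would add the three tested equations. The first two yield $\tfrac{\epsilon}{2}\tfrac{d}{dt}\|\mathbf{E}\|^2 + \sigma\|\mathbf{E}\|^2 - (\nabla\times\mathbf{H},\mathbf{E}) - L(\nabla p,\mathbf{E}) = (\mathbf{j},\mathbf{E})$ and $\tfrac{\mu}{2}\tfrac{d}{dt}\|\mathbf{H}\|^2 + (\nabla\times\mathbf{E},\mathbf{H}) = 0$; the curl duality $(\nabla\times\mathbf{E},\mathbf{H})=(\mathbf{E},\nabla\times\mathbf{H})$ holds because $\mathbf{E}\times\mathbf{n}=0$ on $\partial\Omega$, so these cross terms cancel when the first two are added. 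For the third equation, using that $\mathcal{B}$ commutes with $\partial_t$ and is self-adjoint, one has $(\partial_t(c_0+\mathcal{B})p,p)=\tfrac{1}{2}\tfrac{d}{dt}((c_0+\mathcal{B})p,p)$, and by monotonicity of $\mathcal{B}$ combined with \eqref{e3.2.7} we get $((c_0+\mathcal{B})p,p)\geq c_0\|p\|^2$. The cross terms $-L(\nabla p,\mathbf{E})$ from the first equation and $-L(\mathbf{E},\nabla p)$ from the third combine to $-2L(\mathbf{E},\nabla p)$, and assumption (H1) via \eqref{e3.2.9} gives
\begin{equation*}
\sigma\|\mathbf{E}\|^2 - 2L(\mathbf{E},\nabla p) + \kappa\|\nabla p\|^2 \geq C(\|\mathbf{E}\|^2 + \|\nabla p\|^2) \geq 0.
\end{equation*}

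This yields the energy inequality
\begin{equation*}
\tfrac{1}{2}\tfrac{d}{dt}\bigl[\epsilon\|\mathbf{E}\|^2 + \mu\|\mathbf{H}\|^2 + ((c_0+\mathcal{B})p,p)\bigr] + C(\|\mathbf{E}\|^2 + \|\nabla p\|^2) \leq (\mathbf{j},\mathbf{E}) + (g,p).
\end{equation*}
Bounding the right-hand side by Cauchy--Schwarz, Young's inequality, and Poincaré's inequality on $p\in H^1_0$ absorbs $\tfrac{C}{2}\|\mathbf{E}\|^2$ and $\tfrac{C}{2}\|\nabla p\|^2$ into the left-hand side, leaving $\|\mathbf{j}\|^2$, $\|g\|^2$, and lower-order terms controlled by the energy. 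Integrating in $t$, using the coercivity $c_0\|p\|^2 \leq ((c_0+\mathcal{B})p,p)$, and applying Gronwall's lemma produces the bound on $\|\mathbf{E}(t)\|^2 + \|\mathbf{H}(t)\|^2 + \|p(t)\|^2$. Finally, setting $\mathbf{v}=\mathbf{u}(t)$ in \eqref{e3.2.6} and using the coercivity $\beta_1\|\mathbf{u}\|_1^2\leq a(\mathbf{u},\mathbf{u})=\alpha(p,\nabla\cdot\mathbf{u})\leq \alpha\|p\|\|\mathbf{u}\|_1$ yields $\|\mathbf{u}(t)\|_1 \leq (\alpha/\beta_1)\|p(t)\|$, completing the estimate.

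The main obstacle is the correct handling of the mixed $L$-coupling term between $\mathbf{E}$ and $\nabla p$: naive absorption would require $L$ small compared to both $\sigma$ and $\kappa$ separately, but the sharper inequality \eqref{e3.2.9} under (H1) precisely quantifies the trade-off and is exactly what is needed to close the energy argument. Beyond that, the computation is a fairly standard Biot/Maxwell energy estimate, with the only other care required being the manipulation of $\partial_t \mathcal{B}p$ via commutativity and self-adjointness of $\mathcal{B}$.
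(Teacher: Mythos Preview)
Your proposal is correct and follows essentially the same route as the paper: test \eqref{e3.2.8} with $(\mathbf{E},\mathbf{H},p)$, use the self-adjointness of $\mathcal{B}$ to get the energy derivative, invoke \eqref{e3.2.9} for the coupling term, apply Gronwall, and then recover $\|\mathbf{u}\|_1$ from the coercivity of $a(\cdot,\cdot)$ in \eqref{e3.2.6}. The only cosmetic difference is that the paper bounds $(\mathbf{j},\mathbf{E})+(g,p)$ directly by $\|\mathbf{j}\|^2+\|g\|^2+\|\mathbf{E}\|^2+\|p\|^2$ and feeds the last two into Gronwall, rather than invoking Poincar\'e to absorb into the dissipation; your detour through Poincar\'e is unnecessary but harmless.
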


\begin{proof}
	Obviously, there holds $(\frac{\partial}{\partial t}(c_{0} + \mathcal{B})p, p) = \frac{1}{2}\frac{\mathrm{d}}{\mathrm{d}t}((c_{0} + \mathcal{B})p, p)$.
	Then, taking $(\mathbf{D},\mathbf{B},q)=\big(\mathbf{E}(t),\mathbf{H}(t),p(t)\big)$ in \eqref{e3.2.8} leads to
	\begin{align*}
		& \frac{1}{2}\frac{\mathrm{d}}{\mathrm{d}t}\big(\epsilon\|\mathbf{E}(t)\|^{2} + \mu\|\mathbf{H}(t)\|^{2} + ((c_{0}+\mathcal{B})p(t), p(t))\big)\\
		+ & \sigma\|\mathbf{E}(t)\|^{2} + \kappa\|\nabla p(t)\|^{2} - 2L(\mathbf{E}(t), \nabla p(t))
		= (\mathbf{j}(t), \mathbf{E}(t)) + (g(t), p(t)).
	\end{align*}
	Integrating over $[0,t]$ and noticing \eqref{e3.2.9} yields
	\begin{align*}
		&\ \epsilon\|\mathbf{E}(t)\|^{2} + \mu\|\mathbf{H}(t)\|^{2} + c_{0}\|p(t)\|^{2}
		\leq \epsilon \|\mathbf{E}_{0}\|^{2} + \mu \|\mathbf{H}_{0}\|^{2} + C \|p_{0}\|^{2}
		\\
		&\ + \int_{0}^{t}{\|\mathbf{j}(\theta)\|^{2}}{\, \mathrm{d}\theta} + \int_{0}^{t}{\|g(\theta)\|^{2}}{\, \mathrm{d}\theta}
		+ \int_{0}^{t}{\|\mathbf{E}(\theta)\|^{2}}{\, \mathrm{d}\theta} + \int_{0}^{t}{\|p(\theta)\|^{2}}{\, \mathrm{d}\theta}.
	\end{align*}
	Applying Gronwall's lemma, we obtain	
	\begin{align*}
		&\ \|\mathbf{E}(t)\|^{2} + \|\mathbf{H}(t)\|^{2} + \|p(t)\|^{2}
		\\
		\leq&\ C \Big( \|\mathbf{E}_{0}\|^{2} + \|\mathbf{H}_{0}\|^{2} + \|p_{0}\|^{2} + \int_{0}^{t}{\|\mathbf{j}(\theta)\|^{2}}{\, \mathrm{d}\theta} + \int_{0}^{t}{\|g(\theta)\|^{2}}{\, \mathrm{d}\theta} \Big).
	\end{align*}
	From \eqref{e3.2.6} and the coerciveness of $a(\cdot,\cdot)$, it follows that
	\begin{align*}
		\beta_{1} \|\mathbf{u}(t)\|_{1}^{2} \leq |a(\mathbf{u}(t),\mathbf{u}(t))| = |(p(t), \alpha \nabla\cdot \mathbf{u}(t))| \leq C\|p(t)\|\|\mathbf{u}(t)\|_{1},
	\end{align*}
	which implies $\|\mathbf{u}(t)\|_{1} \leq C\|p(t)\|$. Thus, the proof is completed.
\end{proof}

\begin{theorem}\label{t3.3.2}
	Assume (H1)--(H3). Then the initial-boundary value model \eqref{e3.2.1}--\eqref{e3.2.3} admits at most one weak solution.
\end{theorem}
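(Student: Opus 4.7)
The plan is to exploit the linearity of \eqref{e3.2.1}--\eqref{e3.2.3} together with the a priori estimate already established in Lemma \ref{l3.3.1}. Suppose $(\mathbf{E}_{1},\mathbf{H}_{1},\mathbf{u}_{1},p_{1})$ and $(\mathbf{E}_{2},\mathbf{H}_{2},\mathbf{u}_{2},p_{2})$ are two weak solutions in the sense of Definition \ref{d3.2.1} corresponding to the same data $(\mathbf{E}_{0},\mathbf{H}_{0},\mathbf{u}_{0},p_{0})$, $\mathbf{j}$, $g$. I would form the difference $(\tilde{\mathbf{E}},\tilde{\mathbf{H}},\tilde{\mathbf{u}},\tilde{p}):=(\mathbf{E}_{1}-\mathbf{E}_{2},\mathbf{H}_{1}-\mathbf{H}_{2},\mathbf{u}_{1}-\mathbf{u}_{2},p_{1}-p_{2})$, which still lies in $L^{2}(0,T;\bar{\mathbb{S}})\cap H^{1}(0,T;\bar{\mathbb{H}})$. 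Because each term in \eqref{e3.2.4} is linear in the unknowns, this difference satisfies the same variational system but with vanishing source data $\mathbf{j}\equiv\mathbf{0}$, $g\equiv 0$ and vanishing initial values $\tilde{\mathbf{E}}(0)=\mathbf{0}$, $\tilde{\mathbf{H}}(0)=\mathbf{0}$, $\tilde{\mathbf{u}}(0)=\mathbf{0}$, $\tilde{p}(0)=0$.

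Thus $(\tilde{\mathbf{E}},\tilde{\mathbf{H}},\tilde{\mathbf{u}},\tilde{p})$ is itself a weak solution to the homogeneous problem, and the hypotheses of Lemma \ref{l3.3.1} are trivially in force. Invoking that lemma yields, for every $t\in[0,T]$,
\[
\|\tilde{\mathbf{E}}(t)\|^{2} + \|\tilde{\mathbf{H}}(t)\|^{2} + \|\tilde{\mathbf{u}}(t)\|_{1}^{2} + \|\tilde{p}(t)\|^{2} \leq 0,
\]
which forces each component to vanish identically on $[0,T]$, so the two solutions coincide.

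Since Lemma \ref{l3.3.1} does all the analytic work, I do not anticipate any real obstacle. The only point worth emphasising is that the test functions $(\mathbf{D},\mathbf{B},\mathbf{v},q)\in\bar{\mathbb{S}}$ in Definition \ref{d3.2.1} are time-independent, so the termwise subtraction of the two copies of \eqref{e3.2.4} is immediate and no integration-by-parts in time or density argument is required. This is precisely the structural advantage of the new weak formulation over the one in \cite{HuMe2023}, where time-dependent test functions would render the analogous subtraction considerably less transparent.
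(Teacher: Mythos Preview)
Your proposal is correct and follows essentially the same approach as the paper: form the difference of two weak solutions, observe by linearity that it solves the homogeneous problem with zero data, and apply Lemma~\ref{l3.3.1} to conclude that the difference vanishes. The paper's proof is slightly terser but the argument is identical.
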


\begin{proof}
	Assume that $\bar{\mathbf{U}}_{i}:=(\mathbf{E}_{i},\mathbf{H}_{i},\mathbf{u}_{i},p_{i}) \in L^{2}(0,T;\bar{\mathbb{S}}) \cap H^{1}(0,T;\bar{\mathbb{H}})$ $(i=1,2)$ are any two weak solutions to \eqref{e3.2.1}--\eqref{e3.2.3}. Then $\bar{\mathbf{U}}_{1}-\bar{\mathbf{U}}_{2}$ satisfies a homogeneous equations with homogeneous initial and boundary conditions. By virtue of Lemma \ref{l3.3.1}, we obtain for all $t\in [0,T]$,
	\begin{equation*}
		\|\mathbf{E}_{1}(t)-\mathbf{E}_{2}(t)\|^{2} + \|\mathbf{H}_{1}(t)-\mathbf{H}_{2}(t)\|^{2} + \|\mathbf{u}_{1}(t)-\mathbf{u}_{2}(t)\|_{1}^{2} + \|p_{1}(t)-p_{2}(t)\|^{2} \leq 0,
	\end{equation*}
	which implies $\bar{\mathbf{U}}_{1} = \bar{\mathbf{U}}_{2}$.
\end{proof}

\subsection{Existence}\label{s3.3.2}

In this subsection, we establish the existence of a unique weak solution to \eqref{e3.2.8} using the Galerkin method. First, we construct a family of finite-dimensional approximate problems, from which we derive a sequence of approximate solutions. Next, we demonstrate that these approximate solutions are uniformly bounded, ensuring the existence of a weak limit in an appropriate function space. Finally, we show that the limit function possesses additional regularity, confirming it as a solution to \eqref{e3.2.8}. Combined with \eqref{e3.2.6}, this result provides a weak solution to the system \eqref{e3.2.1}--\eqref{e3.2.3}.

The assumption (H3) implies $\mathbf{U}_{0}\in \mathbb{S}$. If $\mathbf{U}_{0}\neq\mathbf{0}$, we have a unique decomposition of space $\mathbb{S}=\operatorname{span}\{\mathbf{U}_{0}\}\oplus \operatorname{span}\{\mathbf{U}_{0}\}^{\perp}$. Let $\mathbf{\Phi}_{1}=\mathbf{U}_{0}/\|\mathbf{U}_{0}\|$ and $\{\mathbf{\Phi}_{i}\}_{i=2}^{\infty}\subset \operatorname{span}\{\mathbf{U}_{0}\}^{\perp}$ such that $\{\mathbf{\Phi}_{i}\}_{i=1}^{\infty}$ is an orthonormal basis of $\mathbb{H}$. If $\mathbf{U}_{0}=\mathbf{0}$, we may choose $\{\mathbf{\Phi}_{i}\}_{i=1}^{\infty}\subset \mathbb{S}$ to be any orthonormal basis of $\mathbb{H}$. Therefore, we always have $\mathbf{U}_{0}\in \operatorname{span}\{\mathbf{\Phi}_{1}\}$.

For any integer $m>0$, define a Galerkin finite dimensional subspace $\mathbf{V}_{m}= \operatorname{span}\{\mathbf{\Phi}_{1},\ldots,\mathbf{\Phi}_{m}\}$. Denote by $P_{m}$ the orthogonal projection operator from $\mathbb{H}$ to $\mathbf{V}_{m}$. Then there  holds that $P_{m}\mathbf{U}_{0} = \mathbf{U}_{0}$. 
A family of Galerkin approximations to \eqref{e3.2.8} is defined as seeking $\mathbf{U}_{m}(t):=(\mathbf{E}_{m}(t),\mathbf{H}_{m}(t),p_{m}(t))\in \mathbf{V}_{m}$ with $\mathbf{U}_{m}(0) = \mathbf{U}_{0}$ such that
\begin{equation}\label{e3.3.1}
	\begin{aligned}
		& \big(\epsilon \mathbf{E}_{m}^{\prime}(t),\mathbf{D}\big) \!\!+\!\! \big(\sigma \mathbf{E}_{m}(t),\mathbf{D}\big) \!\!-\!\! \big(\nabla\!\times\! \mathbf{H}_{m}(t),\mathbf{D}\big) \!\!-\!\! \big(L\nabla p_{m}(t),\mathbf{D}\big) \!\!=\!\! \big(\mathbf{j}(t),\mathbf{D}\big),
		\\
		& \big(\mu \mathbf{H}_{m}^{\prime}(t),\mathbf{B}\big) + \big(\nabla\times \mathbf{E}_{m}(t),\mathbf{B}\big) = 0,
		\\
		& \big((c_{0}+\mathcal{B})p_{m}^{\prime}(t),q\big) + \big(\kappa\nabla p_{m}(t),\nabla q\big) - \big(L \mathbf{E}_{m}(t),\nabla q\big) = \big(g(t),q\big),
	\end{aligned}
\end{equation}
for all $t\in [0,T]$ and $(\mathbf{D},\mathbf{B},q)\in \mathbf{V}_{m}$. According to the classical theory of ODEs \cite{Ar1992}, this equation possesses a unique solution $\mathbf{U}_{m}(t)$. 
We now study its uniform boundedness.

\begin{lemma}\label{l3.3.3}
	Assume (H1)--(H3). Then there exists a constant $C>0$ independent of $m$ such that for all $t\in [0,T]$,
	\begin{align*}
		\|\mathbf{E}_{m}(t)\| + \|\mathbf{H}_{m}(t)\| + \|p_{m}(t)\| + \|\mathbf{E}^{\prime}_{m}(t)\| + \|\mathbf{H}^{\prime}_{m}(t)\| + \|p^{\prime}_{m}(t)\|
		\leq&\ C,
		\\
		\|\nabla \times \mathbf{E}_{m}(t)\| + \|\nabla \times \mathbf{H}_{m}\|_{L^{2}(0,T;\bm{L}^{2}(\Omega))} + \|\nabla p_{m}\|_{L^{2}(0,T;\bm{L}^{2}(\Omega))}
		\leq&\ C.
	\end{align*}
\end{lemma}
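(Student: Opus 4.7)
The plan is to emulate the a priori estimate of Lemma \ref{l3.3.1} for the Galerkin system \eqref{e3.3.1}, and then to estimate the time derivative $\mathbf{U}'_m$ by differentiating the system in $t$. First I would test \eqref{e3.3.1} with $(\mathbf{D},\mathbf{B},q)=(\mathbf{E}_m(t),\mathbf{H}_m(t),p_m(t))\in \mathbf{V}_m$. Adding the three equations, using \eqref{e3.2.7} for positivity of $c_{0}+\mathcal{B}$ and \eqref{e3.2.9} to absorb the cross term $-2L(\mathbf{E}_m,\nabla p_m)$, then integrating on $[0,t]$ and applying Gronwall's lemma would yield $\|\mathbf{E}_m(t)\|+\|\mathbf{H}_m(t)\|+\|p_m(t)\|\le C$ together with the integrated bound $\int_0^T(\|\mathbf{E}_m\|^2+\|\nabla p_m\|^2)\,d\theta\le C$; the initial data $\mathbf{U}_m(0)=\mathbf{U}_0$ is controlled through (H3).

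Second, because $\mathbf{V}_m$ is independent of $t$, I would differentiate \eqref{e3.3.1} in time to produce the analogous system satisfied by $\mathbf{U}'_m$ and test it with $(\mathbf{E}'_m(t),\mathbf{H}'_m(t),p'_m(t))$. To close Gronwall I need to bound $\|\mathbf{U}'_m(0)\|$, which I would obtain by setting $t=0$ in \eqref{e3.3.1} and testing each line with the corresponding derivative at $t=0$. The right-hand sides are controlled by (H3) (which ensures $\nabla\times\mathbf{E}_0,\nabla\times\mathbf{H}_0,\Delta p_0,\nabla\cdot\mathbf{E}_0\in \bm{L}^2$) combined with (H2) and the Sobolev embedding $H^1(0,T;\bm{L}^2)\hookrightarrow C([0,T];\bm{L}^2)$ to evaluate $\mathbf{j}(0)$ and $g(0)$. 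A key point for the $p'_m(0)$ estimate is to rewrite $(L\mathbf{E}_0,\nabla p'_m(0))=-(L\nabla\cdot\mathbf{E}_0,p'_m(0))$ via Green's formula, which is legal precisely because of the $\bm{H}(div)$ regularity of $\mathbf{E}_0$ in (H3) and the vanishing trace of $p'_m(0)$. After Gronwall this produces $\|\mathbf{E}'_m(t)\|+\|\mathbf{H}'_m(t)\|+\|p'_m(t)\|\le C$.

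Third, the integrated bound on $\|\nabla p_m\|_{L^2(0,T;\bm{L}^2)}$ is already a byproduct of stage one. For the curl estimates I would read the second Galerkin equation as the identity $(\nabla\times\mathbf{E}_m,\mathbf{B})=-\mu(\mathbf{H}'_m,\mathbf{B})$ and use the bound on $\mathbf{H}'_m$ from stage two to conclude $\|\nabla\times\mathbf{E}_m(t)\|\le C$, while the first Galerkin equation gives $(\nabla\times\mathbf{H}_m,\mathbf{D})=(\epsilon\mathbf{E}'_m+\sigma\mathbf{E}_m-L\nabla p_m-\mathbf{j},\mathbf{D})$, and squaring and integrating in $t$ together with the stage one and two bounds gives $\int_0^T\|\nabla\times\mathbf{H}_m\|^2\,d\theta\le C$. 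The main obstacle is the rigorous passage from these Galerkin identities to genuine $\bm{L}^2$-norms on the full curls: the equations hold only against test functions in $\mathbf{V}_m$, so a priori one controls only their orthogonal projections onto $\mathbf{V}_m$. This forces the orthonormal basis $\{\mathbf{\Phi}_i\}$ to be chosen with a compatible product structure and with de Rham-type inclusions relating its components under $\nabla$ and $\nabla\times$, so that the projection errors vanish and the Galerkin identities lift to genuine $\bm{L}^2$ bounds uniform in $m$.
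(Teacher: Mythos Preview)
Your three-stage plan---energy estimate on $\mathbf{U}_m$, differentiated energy estimate after bounding $\mathbf{U}'_m(0)$ at $t=0$ via exactly the integrations by parts you describe, then reading the curl bounds off the first two Galerkin equations---is precisely the paper's proof. On the last step the paper is less scrupulous than you: it simply asserts $\nabla\times\mathbf{H}_m=\epsilon\mathbf{E}'_m+\sigma\mathbf{E}_m-L\nabla p_m-P_m\mathbf{j}$ ``in $\mathbf{V}_m$'' and passes directly to the $L^2$ bound without invoking any de Rham compatibility of the basis, so your caution about the projection issue actually goes beyond what the paper addresses.
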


\begin{proof}
	Taking $(\mathbf{D},\mathbf{B},q)=(\mathbf{E}_{m}(t),\mathbf{H}_{m}(t),p_{m}(t))$ in \eqref{e3.3.1} gives
	\begin{align*}
		&\ \frac{1}{2}\frac{\mathrm{d}}{\mathrm{d}t}\big(\epsilon\|\mathbf{E}_{m}(t)\|^{2} + \mu\|\mathbf{H}_{m}(t)\|^{2} + ((c_{0}+\mathcal{B})p_{m}(t), p_{m}(t))\big)\\
		+ &\ \sigma\|\mathbf{E}_{m}(t)\|^{2} + \kappa\|\nabla p_{m}(t)\|^{2} - 2L(\mathbf{E}_{m}(t), \nabla p_{m}(t))
		\\
		=&\ (\mathbf{j}(t), \mathbf{E}_{m}(t)) + (g(t), p_{m}(t)).
	\end{align*}
	Integrating over $[0,t]$ and noticing \eqref{e3.2.9}, we obtain
	\begin{align*}
		&\ \|\mathbf{E}_{m}(t)\|^{2} + \|\mathbf{H}_{m}(t)\|^{2} + \|p_{m}(t)\|^{2} + \int_{0}^{t}{\|\nabla p_{m}(\theta)\|^{2}}{\, \mathrm{d}\theta}
		\leq C \Big( \|\mathbf{E}_{0}\|^{2} + \|\mathbf{H}_{0}\|^{2}
		\\
		&\ + \|p_{0}\|^{2} + \int_{0}^{t}{\|\mathbf{j}(\theta)\|^{2}}{\, \mathrm{d}\theta}+\int_{0}^{t}{\|g(\theta)\|^{2}}{\, \mathrm{d}\theta}
		+ \int_{0}^{t}{\big(\|\mathbf{E}_{m}(\theta)\|^{2} \!+\! \|p_{m}(\theta)\|^{2}\big)}{\, \mathrm{d}\theta} \Big).
	\end{align*}
	By (H2), (H3),  and Gronwall's lemma, we have
	\begin{equation}\label{e3.3.2}
		\|\mathbf{E}_{m}(t)\|^{2}+\|\mathbf{H}_{m}(t)\|^{2}+\|p_{m}(t)\|^{2} \leq C,
		\quad
		\|\nabla p_{m}\|_{L^{2}(0,T;\bm{L}^{2}(\Omega))} \leq C,
	\end{equation}
	where $C=C(T)>0$ is a constant independent of $m$.
	
	Choosing $(\mathbf{D},\mathbf{B},q)=(\mathbf{E}_{m}^{\prime}(t),\mathbf{H}_{m}^{\prime}(t),p_{m}^{\prime}(t))$ in \eqref{e3.3.1} and letting  $t=0$ yield
	\begin{align*}
		& \big(\epsilon \mathbf{E}_{m}^{\prime}(0),\mathbf{E}_{m}^{\prime}(0)\big) + \big(\sigma \mathbf{E}_{0},\mathbf{E}_{m}^{\prime}(0)\big) - \big(\nabla\times \mathbf{H}_{0},\mathbf{E}_{m}^{\prime}(0)\big) - \big(L\nabla p_{0},\mathbf{E}_{m}^{\prime}(0)\big)
		\\
		& \quad = \big(\mathbf{j}(0),\mathbf{E}_{m}^{\prime}(0)\big),
		\\
		& \big(\mu \mathbf{H}_{m}^{\prime}(0),\mathbf{H}_{m}^{\prime}(0)\big) + \big(\nabla\times \mathbf{E}_{0},\mathbf{H}_{m}^{\prime}(0)\big) = 0,
		\\
		& \big((c_{0}+\mathcal{B})p_{m}^{\prime}(0),p_{m}^{\prime}(0)\big) \!-\! \big(\kappa\Delta p_{0},p_{m}^{\prime}(0)\big) \!+\! \big(L\nabla\cdot \mathbf{E}_{0}, p_{m}^{\prime}(0)\big) = \big(g(0),p_{m}^{\prime}(0)\big).
	\end{align*}
	By means of Cauchy--Schwarz inequality, we arrive at
	\begin{align*}
		& \epsilon\|\mathbf{E}_{m}^{\prime}(0)\| \leq \|\mathbf{j}(0)\|+\sigma\|\mathbf{E}_{0}\|+\|\nabla\times \mathbf{H}_{0}\|+L\|\nabla p_{0}\| \leq C,
		\\
		& \mu\|_{m}^{\prime}(0)\| \leq \|\nabla\times \mathbf{E}_{0}\| \leq C,
		\\
		& c_{0}\|p_{m}^{\prime}(0)\| \leq \|g(0)\|+\kappa\|\Delta p_{0}\|+L\|\nabla\cdot \mathbf{E}_{0}\| \leq C.
	\end{align*}
	
	Taking derivative with respect to $t$ on both sides of \eqref{e3.3.1} and then choosing $(\mathbf{D},\mathbf{B},q)=(\mathbf{E}_{m}^{\prime}(t),\mathbf{H}_{m}^{\prime}(t),p_{m}^{\prime}(t))$, we have
	\begin{align*}
		& \big(\epsilon \mathbf{E}_{m}^{\prime\prime}(t),\mathbf{E}_{m}^{\prime}(t)\big) \!+\! \big(\sigma \mathbf{E}_{m}^{\prime}(t),\mathbf{E}_{m}^{\prime}(t)\big) \!-\! \big(\nabla \!\times\! \mathbf{H}_{m}^{\prime}(t), \mathbf{E}_{m}^{\prime}(t)\big) \!-\! \big(L\nabla p_{m}^{\prime}(t),\mathbf{E}_{m}^{\prime}(t)\big)
		\\
		& \quad = \big(\mathbf{j}^{\prime}(t),\mathbf{E}_{m}^{\prime}(t)\big),
		\\
		& \big(\mu \mathbf{H}_{m}^{\prime\prime}(t),\mathbf{H}_{m}^{\prime}(t)\big) + \big(\nabla\times \mathbf{E}_{m}^{\prime}(t),\mathbf{H}_{m}^{\prime}(t)\big) = 0,
		\\
		& \big((c_{0}+\mathcal{B})p_{m}^{\prime\prime}(t),p_{m}^{\prime}(t)\big) + \big(\kappa\nabla p_{m}^{\prime}(t),\nabla p_{m}^{\prime}(t)\big) - \big(L \mathbf{E}_{m}^{\prime}(t),\nabla p_{m}^{\prime}(t)\big)
		\\
		& \quad = \big(g^{\prime}(t),p_{m}^{\prime}(t)\big).
	\end{align*}
	In a similar argument as proving \eqref{e3.3.2}, we obtain
	\begin{equation}\label{e3.3.3}
		\|\mathbf{E}_{m}^{\prime}(t)\|^{2}+\|\mathbf{H}_{m}^{\prime}(t)\|^{2}+\|p_{m}^{\prime}(t)\|^{2} \leq C,
	\end{equation}
	where $C>0$ is a constant independent of $m$.
	
	From the first equation of \eqref{e3.3.1}, it follows that
	\begin{equation*}
		\nabla\times \mathbf{H}_{m}(t)=\epsilon \mathbf{E}_{m}^{\prime}(t) + \sigma \mathbf{E}_{m}(t) - L\nabla p_{m}(t) - P_{m}\mathbf{j}(t) \ \text{in} \ \mathbf{V}_{m}.
	\end{equation*}
	According to \eqref{e3.3.2} and \eqref{e3.3.3}, we know $\|\nabla \times \mathbf{H}_{m}\|_{L^{2}(0,T;\bm{L}^{2}(\Omega))} \leq C$. Similarly, $\|\nabla \times \mathbf{E}_{m}(t)\| \leq C$ follows. 
	Thus, the proof is completed.
\end{proof}

This lemma implies that $\mathbf{U}_{m}=(\mathbf{E}_{m},\mathbf{H}_{m},p_{m})$, $m>0$, is a bounded sequence in $L^{\infty}(0,T;\mathbb{H})\subset L^{2}(0,T;\mathbb{H})$, and hence it possesses a weakly convergent subsequence (still denoted as $\mathbf{U}_{m}$) in $L^{2}(0,T;\mathbb{H})$ with limit $\mathbf{U}=(\mathbf{E},\mathbf{H},p)$, i.e.,
\begin{equation}\label{e3.3.4}
	\mathbf{U}_{m}\rightharpoonup \mathbf{U} \ \text{in} \ L^{2}(0,T;\mathbb{H})\ \text{as}\ m\to\infty.
\end{equation}
Similarly, there exist unique functions $\tilde{\mathbf{V}} :=(\tilde{\mathbf{V}}_{1},\tilde{\mathbf{V}}_{2},\tilde{V}_{3})\in L^{2}(0,T;\mathbb{H})$ and $\tilde{\mathbf{q}}, \tilde{\mathbf{W}},\tilde{\mathbf{M}}\in L^{2}(0,T;\bm{L}^{2}(\Omega))$ such that as $m\to\infty$, there hold
\begin{equation}\label{e3.3.5}
	\begin{aligned}
		& \mathbf{U}_{m}^{\prime}\rightharpoonup \tilde{\mathbf{V}} \ \text{in} \ L^{2}(0,T;\mathbb{H}),
		\\
		& \nabla p_{m}\rightharpoonup \tilde{\mathbf{q}},\ \nabla\times \mathbf{E}_{m}\rightharpoonup \tilde{\mathbf{W}},\ \nabla\times \mathbf{H}_{m}\rightharpoonup \tilde{\mathbf{M}} \ \text{in}\ L^{2}(0,T;\bm{L}^{2}(\Omega)).
	\end{aligned}
\end{equation}

For any $\boldsymbol{\phi}\in \bm{L}^{2}(\Omega)$ and $\psi\in L^{2}(0,T)$, by means of \eqref{e3.3.4}, we get
\begin{equation*}
	\lim\limits_{m\to\infty}\int_{0}^{T}{\big(\mathbf{E}_{m}(t),\boldsymbol{\phi}\big)\psi(t)}{\, \mathrm{d}t} = \int_{0}^{T}{\big(\mathbf{E}(t),\boldsymbol{\phi}\big)\psi(t)}{\, \mathrm{d}t}.
\end{equation*}
This leads to for a.e.\ $t\in [0,T]$,
\begin{equation*}
	\lim\limits_{m\to\infty}\big(\mathbf{E}_{m}(t),\boldsymbol{\phi}\big) = \big(\mathbf{E}(t),\boldsymbol{\phi}\big).
\end{equation*}
Similarly, we can prove for a.e.\ $t\in [0,T]$, $\varphi\in L^{2}(\Omega)$ and $\boldsymbol{\phi}\in \bm{L}^{2}(\Omega)$, there hold that
\begin{align*}
	\lim\limits_{m\to\infty}\big(\mathbf{H}_{m}(t),\boldsymbol{\phi}\big) &= \big(\mathbf{H}(t),\boldsymbol{\phi}\big), &
	\lim\limits_{m\to\infty}\big(p_{m}(t),\varphi\big) &= \big(p(t),\varphi\big),
	\\
	\lim\limits_{m\to\infty}\big(\mathbf{E}_{m}^{\prime}(t),\boldsymbol{\phi}\big) &= \big(\tilde{\mathbf{V}}_{1}(t),\boldsymbol{\phi}\big), &
	\lim\limits_{m\to\infty}\big(\mathbf{H}_{m}^{\prime}(t),\boldsymbol{\phi}\big) &= \big(\tilde{\mathbf{V}}_{2}(t),\boldsymbol{\phi}\big),
	\\
	\lim\limits_{m\to\infty}\big(p_{m}^{\prime}(t),\varphi\big) &= \big(\tilde{V}_{3}(t),\varphi\big), &
	\lim\limits_{m\to\infty}\big(\nabla p_{m}(t),\boldsymbol{\phi}\big) &= \big(\tilde{\mathbf{q}},\boldsymbol{\phi}\big),
	\\
	\lim\limits_{m\to\infty}\big(\nabla\times\mathbf{E}_{m}(t),\boldsymbol{\phi}\big) &= \big(\tilde{\mathbf{W}}(t),\boldsymbol{\phi}\big), &
	\lim\limits_{m\to\infty}\big(\nabla\times\mathbf{H}_{m}(t),\boldsymbol{\phi}\big) &= \big(\tilde{\mathbf{M}}(t),\boldsymbol{\phi}\big).
\end{align*}

The next lemma is about  the regularity of the weak limit function $(\mathbf{E},\mathbf{H},p)$.

\begin{lemma}\label{l3.3.4}
	Assume (H1)--(H3). Then
	\begin{equation*}
		\frac{\partial \mathbf{U}}{\partial t}=\tilde{\mathbf{V}},\ \nabla p=\tilde{\mathbf{q}},\ \nabla\times \mathbf{E}=\tilde{\mathbf{W}},\ \nabla\times \mathbf{H}=\tilde{\mathbf{M}}.
	\end{equation*}
	Furthermore, $p\in L^{2}(0,T;H^{1}_{0})$ and $\mathbf{E}\in L^{2}(0,T;\bm{H}_{0}(\mathbf{curl}))$.
\end{lemma}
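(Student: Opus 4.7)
The plan is to interpret each of the four identifications as a statement about distributional derivatives, and to confirm them by testing against smooth compactly supported functions and passing to the limit using the weak convergences established in \eqref{e3.3.4}--\eqref{e3.3.5}. Specifically, for the temporal derivative, I would fix $\boldsymbol{\phi}\in\bm{L}^{2}(\Omega)$ and $\psi\in C^{\infty}_{c}(0,T)$, then apply integration by parts in time to the identity
\begin{equation*}
\int_{0}^{T}\big(\mathbf{E}_{m}^{\prime}(t),\boldsymbol{\phi}\big)\psi(t)\,\mathrm{d}t = -\int_{0}^{T}\big(\mathbf{E}_{m}(t),\boldsymbol{\phi}\big)\psi^{\prime}(t)\,\mathrm{d}t,
\end{equation*}
and send $m\to\infty$: the left side tends to $\int_{0}^{T}(\tilde{\mathbf{V}}_{1}(t),\boldsymbol{\phi})\psi(t)\,\mathrm{d}t$ by \eqref{e3.3.5}, while the right side tends to $-\int_{0}^{T}(\mathbf{E}(t),\boldsymbol{\phi})\psi^{\prime}(t)\,\mathrm{d}t$ by \eqref{e3.3.4}. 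Since $\boldsymbol{\phi}$ and $\psi$ are arbitrary, this identifies $\partial_{t}\mathbf{E}=\tilde{\mathbf{V}}_{1}$ in the sense of distributions, and hence a.e.\ in $[0,T]$. The arguments for $\partial_{t}\mathbf{H}=\tilde{\mathbf{V}}_{2}$ and $\partial_{t}p=\tilde{V}_{3}$ are entirely analogous.

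For the spatial identifications, I would use Green's formula. Pick $\boldsymbol{\phi}\in \bm{C}^{\infty}_{c}(\Omega)$ and $\psi\in C^{\infty}_{c}(0,T)$; since $p_{m}\in H^{1}_{0}$, we have
\begin{equation*}
\int_{0}^{T}(\nabla p_{m}(t),\boldsymbol{\phi})\psi(t)\,\mathrm{d}t = -\int_{0}^{T}(p_{m}(t),\nabla\cdot\boldsymbol{\phi})\psi(t)\,\mathrm{d}t,
\end{equation*}
and passing to the limit via \eqref{e3.3.4}--\eqref{e3.3.5} gives $(\tilde{\mathbf{q}},\boldsymbol{\phi})=-(p,\nabla\cdot\boldsymbol{\phi})$ for a.e.\ $t$, which is precisely the distributional statement $\nabla p=\tilde{\mathbf{q}}$. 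The same idea, using the duality $(\nabla\times \mathbf{E}_{m},\boldsymbol{\phi})=(\mathbf{E}_{m},\nabla\times\boldsymbol{\phi})$ for $\boldsymbol{\phi}\in \bm{C}^{\infty}_{c}(\Omega)$, yields $\nabla\times \mathbf{E}=\tilde{\mathbf{W}}$ and $\nabla\times \mathbf{H}=\tilde{\mathbf{M}}$.

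To finish with the regularity statements, observe that once $\nabla p=\tilde{\mathbf{q}}\in L^{2}(0,T;\bm{L}^{2}(\Omega))$ and $p\in L^{2}(0,T;L^{2}(\Omega))$, we get $p\in L^{2}(0,T;H^{1})$, and similarly $\mathbf{E}\in L^{2}(0,T;\bm{H}(\mathbf{curl}))$. The remaining point is to verify that the homogeneous boundary conditions are inherited by the weak limits. Since each $p_{m}(t)\in H^{1}_{0}$, the bound on $\|\nabla p_{m}\|_{L^{2}(0,T;\bm{L}^{2}(\Omega))}$ from Lemma \ref{l3.3.3} together with \eqref{e3.3.2} gives a uniform bound in $L^{2}(0,T;H^{1}_{0})$; passing to a further weakly convergent subsequence in this reflexive space and using uniqueness of weak limits (already identified above as $p$), we conclude $p\in L^{2}(0,T;H^{1}_{0})$ because $H^{1}_{0}$ is a closed subspace of $H^{1}$ and therefore weakly closed. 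The analogous reasoning applied to $\mathbf{E}_{m}\in \bm{H}_{0}(\mathbf{curl})$, using the curl-bound from Lemma \ref{l3.3.3} and the fact that $\bm{H}_{0}(\mathbf{curl})$ is closed in $\bm{H}(\mathbf{curl})$, gives $\mathbf{E}\in L^{2}(0,T;\bm{H}_{0}(\mathbf{curl}))$.

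The technical ingredients are all standard, so there is no serious obstacle; the only step that requires a little care is the preservation of the homogeneous boundary conditions under weak limits, which reduces to the closedness of $H^{1}_{0}$ in $H^{1}$ and of $\bm{H}_{0}(\mathbf{curl})$ in $\bm{H}(\mathbf{curl})$.
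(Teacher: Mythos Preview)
Your proposal is correct. The identifications of $\partial_t\mathbf{U}$, $\nabla p$, $\nabla\times\mathbf{E}$, and $\nabla\times\mathbf{H}$ via testing against compactly supported functions and passing to the limit are exactly the paper's strategy (the paper uses test functions in $C^{1}_{0}(0,T;\bm{L}^{2}(\Omega))$ and $L^{2}(0,T;\bm{C}^{1}_{0}(\Omega))$ rather than tensor products, but this is cosmetic). The only genuine difference is in the boundary-condition step: the paper tests against $\mathbf{\Psi}_{3}\in L^{2}(0,T;\bm{C}^{1}(\bar\Omega))$ (i.e.\ not compactly supported in $\Omega$), uses Green's formula to isolate the boundary integral $\int_{0}^{T}\langle p(t),\mathbf{\Psi}_{3}(t)\cdot\mathbf{n}\rangle_{\partial\Omega}\,\mathrm{d}t$, and shows it vanishes by passing to the limit, then invokes trace theory; you instead extract a further weakly convergent subsequence in the reflexive space $L^{2}(0,T;H^{1}_{0})$ and identify the limit with $p$. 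Your route is slightly more abstract and avoids the explicit trace computation, at the cost of invoking reflexivity and a sub-subsequence argument; the paper's route is more hands-on and makes the mechanism (vanishing trace) visible. Both are standard and equally valid here. One minor remark: your closing sentence about ``$H^{1}_{0}$ closed in $H^{1}$ hence weakly closed'' is superfluous once you work directly in $L^{2}(0,T;H^{1}_{0})$, since the weak limit automatically lies in that space.
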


\begin{proof}
	For any given $\mathbf{\Psi}_{1}\in C^{1}_{0}(0,T;\bm{L}^{2}(\Omega))$, integrating by parts gives
	\begin{equation*}
		\int_{0}^{T}{\big(\mathbf{E}_{m}^{\prime}(t), \mathbf{\Psi}_{1}(t)\big)}{\, \mathrm{d}t} = -\int_{0}^{T}{\big(\mathbf{E}_{m}(t), \frac{\partial\mathbf{\Psi}_{1}}{\partial t}(t)\big)}{\, \mathrm{d}t}.
	\end{equation*}
	By virtue of \eqref{e3.3.4} and \eqref{e3.3.5}, let $m\to \infty$ in above equation, we obtain
	\begin{equation*}
		\int_{0}^{T}{\big(\tilde{\mathbf{V}}_{1}(t), \mathbf{\Psi}_{1}(t)\big)}{\, \mathrm{d}t} = -\int_{0}^{T}{\big(\mathbf{E}(t), \frac{\partial\mathbf{\Psi}_{1}}{\partial t}(t)\big)}{\, \mathrm{d}t}.
	\end{equation*}
	Thus, we get $\frac{\partial \mathbf{E}}{\partial t}=\tilde{\mathbf{V}}_{1}$. 
	Similarly, we can prove $\frac{\partial \mathbf{H}}{\partial t}=\tilde{\mathbf{V}}_{2}$ and $\frac{\partial p}{\partial t}=\tilde V_{3}$.
	
	For any given $\mathbf{\Psi}_{2}\in L^{2}(0,T;\bm{C}^{1}_{0}(\Omega))$, integrating by parts leads to
	\begin{equation*}
		\int_{0}^{T}{\big(\nabla p_{m}(t), \mathbf{\Psi}_{2}(t)\big)}{\, \mathrm{d}t} = -\int_{0}^{T}{\big(p_{m}(t), \nabla\cdot \mathbf{\Psi}_{2}(t)\big)}{\, \mathrm{d}t}.
	\end{equation*}
	Let $m\to \infty$, we obtain
	\begin{equation*}
		\int_{0}^{T}{(\tilde{\mathbf{q}}(t), \mathbf{\Psi}_{2}(t))}{\, \mathrm{d}t} = -\int_{0}^{T}{(p(t), \nabla\cdot \mathbf{\Psi}_{2}(t))}{\, \mathrm{d}t},
	\end{equation*}
	which implies $\nabla p=\tilde{\mathbf{q}}$. In a similar way, we can prove $\nabla\times \mathbf{E}=\tilde{\mathbf{W}}$ and $\nabla\times \mathbf{H}=\tilde{\mathbf{M}}$.
	
	For any given $\mathbf{\Psi}_{3}\in L^{2}(0,T;\bm{C}^{1}(\bar\Omega))$, integrating by parts gives
	\begin{equation*}
		\int_{0}^{T}{\big(\nabla p_{m}(t), \mathbf{\Psi}_{3}(t)\big)}{\, \mathrm{d}t} = 
		- \int_{0}^{T}{\big(p_{m}(t), \nabla\cdot \mathbf{\Psi}_{3}(t)\big)}{\, \mathrm{d}t}.
	\end{equation*}
	By Green's formula, we get
	\begin{align*}
		& \int_{0}^{T}{\big\langle p(t), \mathbf{\Psi}_{3}(t)\cdot \mathbf{n}\big\rangle_{\partial\Omega}}{\, \mathrm{d}t}
		= \int_{0}^{T}{\big(p(t), \nabla\cdot \mathbf{\Psi}_{3}(t)\big)}{\, \mathrm{d}t}+\int_{0}^{T}{\big(\nabla p(t), \mathbf{\Psi}_{3}(t)\big)}{\, \mathrm{d}t}
		\\
		=& \lim_{m\to \infty} \Big( \int_{0}^{T}{\big(p_{m}(t), \nabla\cdot\! \mathbf{\Psi}_{3}(t)\big)}{\, \mathrm{d}t} 
		+ \int_{0}^{T}{\big(\nabla p_{m}(t), \mathbf{\Psi}_{3}(t)\big)}{\, \mathrm{d}t} \Big) 
		= 0,
	\end{align*}
	which implies $p\in L^{2}(0,T;H^{1}_{0})$ due to trace theory \cite{WuYiWa2006}. Similarly, we obtain $\mathbf{E}\in L^{2}(0,T;\bm{H}_{0}(\mathbf{curl}))$.
\end{proof}

We are now ready to establish the well-posedness of \eqref{e3.2.1}--\eqref{e3.2.3}. 

\begin{theorem}\label{t3.3.5}
	Assume (H1)--(H3). Then there exists a unique weak solution $(\mathbf{E},\mathbf{H},\mathbf{u},p)\in L^{2}(0,T;\bar{\mathbb{S}}) \cap H^{1}(0,T;\bar{\mathbb{H}})$ to \eqref{e3.2.1}--\eqref{e3.2.3}. Furthermore, there exists a constant $C=C(T)>0$ such that
	\begin{align*}
		&\ \|\mathbf{E}(t)\| + \|\mathbf{H}(t)\| + \|\mathbf{u}(t)\|_{1} + \|p(t)\| \leq C,
		\\
		&\ \|\frac{\partial}{\partial t} \mathbf{E}(t)\| + \|\frac{\partial}{\partial t} \mathbf{H}(t)\| + \|\frac{\partial}{\partial t} p(t)\| + \|\nabla \times \mathbf{E}(t)\| \leq C.
	\end{align*}
\end{theorem}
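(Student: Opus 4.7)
The plan is to take $(\mathbf{E},\mathbf{H},p)$ to be the weak limit already constructed in Lemma \ref{l3.3.4} of a subsequence of the Galerkin approximations, and then to define $\mathbf{u}$ via \eqref{e3.2.6}. Uniqueness is already granted by Theorem \ref{t3.3.2}, so only existence and the pointwise-in-$t$ bounds need to be established. The pointwise-in-$t$ regularity bounds will fall out from the uniform Galerkin estimates of Lemma \ref{l3.3.3} together with the embedding $H^{1}(0,T;\mathbb{H}) \hookrightarrow C([0,T];\mathbb{H})$, which makes $\mathbf{U}$ continuous in time and legitimizes pointwise evaluation.

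For each fixed $k\geq 1$ and $m\geq k$, the Galerkin identity \eqref{e3.3.1} holds with test function $\mathbf{\Phi}_{k}\in \mathbf{V}_{m}$. Multiplying by an arbitrary $\psi\in C^{\infty}_{c}(0,T)$ and integrating in $t$, I would pass to the limit term by term using the weak convergences of $\mathbf{E}_{m}^{\prime},\mathbf{H}_{m}^{\prime},p_{m}^{\prime},\nabla p_{m},\nabla\times\mathbf{E}_{m},\nabla\times\mathbf{H}_{m}$ identified in Lemma \ref{l3.3.4}; the term containing $\mathcal{B}$ survives because $\mathcal{B}$ is bounded linear on $L^{2}(\Omega)$ and hence weak-to-weak continuous, giving $(c_{0}+\mathcal{B})p_{m}^{\prime}\rightharpoonup (c_{0}+\mathcal{B})\partial_{t} p$ in $L^{2}(0,T;L^{2})$. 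Since $\operatorname{span}\{\mathbf{\Phi}_{k}\}$ is dense in $\mathbb{S}$ (with the basis chosen to respect the $\mathbb{S}$-topology) and the identity is linear in the test function, \eqref{e3.2.8} follows for a.e.\ $t\in [0,T]$. The initial condition $\mathbf{U}(0)=\mathbf{U}_{0}$ is obtained in the standard way by choosing $\psi\in C^{\infty}([0,T])$ with $\psi(0)=1$, $\psi(T)=0$, integrating by parts in time, and comparing the boundary contribution $(\mathbf{U}_{m}(0),\mathbf{\Phi}_{k})=(\mathbf{U}_{0},\mathbf{\Phi}_{k})$ with its limiting counterpart. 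Finally, for each $t$, I would define $\mathbf{u}(t)\in\bm{H}^{1}_{0}$ as the Lax--Milgram solution of \eqref{e3.2.6} with right-hand side $p(t)$; the third equation of \eqref{e3.2.4} is then the defining identity for $\mathbf{u}$, and the fourth equation of \eqref{e3.2.4} follows from the third equation of \eqref{e3.2.8} via $\alpha\nabla\cdot\mathbf{u}=\mathcal{B}p$ and $\mathcal{B}\partial_{t}=\partial_{t}\mathcal{B}$.

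The regularity bounds $\|\mathbf{E}(t)\|,\|\mathbf{H}(t)\|,\|p(t)\|,\|\partial_{t}\mathbf{E}(t)\|,\|\partial_{t}\mathbf{H}(t)\|,\|\partial_{t}p(t)\|,\|\nabla\times\mathbf{E}(t)\|\leq C$ are inherited from the uniform bounds of Lemma \ref{l3.3.3} by weak-$\ast$ lower semicontinuity of the relevant norms in $L^{\infty}(0,T;\cdot)$, together with the time-continuity provided by the $H^{1}$-in-time regularity; the bound $\|\mathbf{u}(t)\|_{1}\leq C\|p(t)\|\leq C$ then follows from coercivity of $a(\cdot,\cdot)$ exactly as in the end of the proof of Lemma \ref{l3.3.1}. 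I anticipate the main obstacle to be the density/compatibility step: ensuring that $\bigcup_{m}\mathbf{V}_{m}$ is dense in $\mathbb{S}$ in its norm topology (not only in $\mathbb{H}$), so that every test function $(\mathbf{D},\mathbf{B},q)\in\mathbb{S}$ can be reached in the limit; this forces the basis $\{\mathbf{\Phi}_{i}\}$ to be chosen as, for example, eigenfunctions of an elliptic operator respecting the curl and gradient boundary conditions encoded in $\mathbb{S}$, a standard but technical choice I would make explicit at the outset.
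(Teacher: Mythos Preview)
Your proposal is correct and follows essentially the same route as the paper: fix a basis element $\mathbf{\Phi}_{k}$ as test function in the Galerkin identity \eqref{e3.3.1}, pass to the limit $m\to\infty$ using the weak convergences of Lemma \ref{l3.3.4}, extend by density, and then recover $\mathbf{u}$ from \eqref{e3.2.6}. You are in fact more careful than the paper's short proof, which does not explicitly verify the initial condition, does not spell out how the pointwise-in-$t$ bounds are inherited, and does not discuss the density of $\bigcup_{m}\mathbf{V}_{m}$ in $\mathbb{S}$---the point you correctly flag as the main technical obstacle.
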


\begin{proof}
	Fixing an arbitrary integer $m_{0}>0$, then for any $m\geq m_{0}$, we have $\mathbf{V}_{m_{0}}\subset \mathbf{V}_{m}$.  Taking $(\mathbf{D},\mathbf{B},q)=(\mathbf{\Phi}_{m_{0}1},\mathbf{\Phi}_{m_{0}2},\Phi_{m_{0}3})\in \mathbf{V}_{m_0}\subset\mathbf{V}_{m}$ in \eqref{e3.3.1} and letting $m\to \infty$, we obtain
	\begin{align*}
		& \big(\epsilon \frac{\partial}{\partial t}\mathbf{E}(t),\mathbf{\Phi}_{m_{0}1}\big) + \big(\sigma \mathbf{E}(t),\mathbf{\Phi}_{m_{0}1}\big) - \big(\nabla\times \mathbf{H}(t),\mathbf{\Phi}_{m_{0}1}\big) - \big(L\nabla p(t),\mathbf{\Phi}_{m_{0}1}\big)
		\\
		& \quad =  \big(\mathbf{j}(t),\mathbf{\Phi}_{m_{0}1}\big),
		\\
		& \big(\mu \frac{\partial}{\partial t}\mathbf{H}(t),\mathbf{\Phi}_{m_{0}2}\big) + \big(\nabla\times \mathbf{E}(t),\mathbf{\Phi}_{m_{0}2}\big) = 0,
		\\
		& \big(\frac{\partial }{\partial t}(c_{0}\!+\!\mathcal{B})p(t),\Phi_{m_{0}3}\big) + \big(\kappa\nabla p(t),\nabla \Phi_{m_{0}3}\big) - \big(L \mathbf{E}(t),\nabla \Phi_{m_{0}3}\big) = \big(g(t),\Phi_{m_{0}3}\big).
	\end{align*}
	Due to the arbitrariness of $m_{0}$, $(\mathbf{E},\mathbf{H},p)$ is a solution to \eqref{e3.2.8}, which together with \eqref{e3.2.6} implies that $(\mathbf{E},\mathbf{H},\mathbf{u},p)$ is the unique weak solution to \eqref{e3.2.1}--\eqref{e3.2.3}.
\end{proof}

\subsection{Regularity estimates}\label{s3.3.3}
In the forthcoming analysis for  convergence order of approximate solutions, we require additional regularity for $(\mathbf{E},\mathbf{H},\mathbf{u},p)$ to \eqref{e3.2.1}--\eqref{e3.2.3}. 
In general, the smoothness of solution is determined by functions $\mathbf{j}$, $g$ and $(\mathbf{E}_{0},\mathbf{H}_{0},\mathbf{u}_{0},p_{0})$. Therefore, instead of assumption (H2), we assume
\begin{itemize}
	\item [\bf{(H2$^{\prime}$)}] The functions $\mathbf{j}$, $g$ and $(\mathbf{E}_{0},\mathbf{H}_{0},\mathbf{u}_{0},p_{0})$ are sufficiently smooth.
\end{itemize}

\begin{lemma}\label{l3.3.6}
	Assume (H1) and (H2$^{\prime}$). Let $(\mathbf{E},\mathbf{H},\mathbf{u},p)$ be a solution to \eqref{e3.2.4}. Then there exists a constant $C>0$ such that
	\begin{align*}
		& \|\nabla\cdot\mathbf{E}(t)\|^{2} + \|\nabla\cdot\mathbf{H}(t)\|^{2} + \|\nabla p(t)\|^{2} + \|\nabla \times \mathbf{H}(t)\|^{2}
		\\
		&\ + \int_{0}^{t}{\|\frac{\partial}{\partial t} \nabla\cdot \mathbf{u}(\theta)\|^{2}}{\, \mathrm{d}\theta}
		+ \int_{0}^{t}{\|\frac{\partial}{\partial t}p(\theta)\|^{2}}{\, \mathrm{d}\theta}
		+ \int_{0}^{t}{\|\Delta p(\theta)\|^{2}}{\, \mathrm{d}\theta} \leq C.
	\end{align*}
\end{lemma}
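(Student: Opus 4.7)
The plan is to derive the seven bounds in three stages, with all computations first performed on the Galerkin approximations $(\mathbf{E}_m,\mathbf{H}_m,p_m)$ from Section \ref{s3.3.2} (where the necessary test functions are admissible and all derivatives exist) and then transferred to the limiting weak solution by weak lower semicontinuity.

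\textbf{Stage 1 (bounds on $\nabla p$, $\partial_t p$, $\partial_t\nabla\cdot\mathbf{u}$).} I would take the test triple $(\mathbf{D},\mathbf{B},q)=(\mathbf{E}_m'(t),\mathbf{H}_m'(t),p_m'(t))\in\mathbf{V}_m$ in \eqref{e3.3.1} and sum the three equations. The electric--pressure coupling collapses to the exact time derivative $-L\tfrac{\mathrm{d}}{\mathrm{d}t}(\nabla p_m,\mathbf{E}_m)$; the electric--magnetic coupling reduces, by integration by parts using $\mathbf{E}_m'\times\mathbf{n}=\mathbf{0}$, to $-\tfrac{\mathrm{d}}{\mathrm{d}t}(\mathbf{H}_m,\nabla\times\mathbf{E}_m) + 2(\mathbf{H}_m',\nabla\times\mathbf{E}_m)$, and the residual term is absorbed by Young's inequality into $\mu\|\mathbf{H}_m'\|^2$ on the left together with the uniform bound $\|\nabla\times\mathbf{E}_m(t)\|\leq C$ already supplied by Lemma \ref{l3.3.3}. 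After integrating over $[0,t]$ and controlling the boundary-in-time terms $-L(\nabla p_m(t),\mathbf{E}_m(t))$ and $-(\mathbf{H}_m(t),\nabla\times\mathbf{E}_m(t))$ by Young's inequality and Lemma \ref{l3.3.3}, one obtains $\|\nabla p_m(t)\|^2 + \int_0^t\|p_m'\|^2\,\mathrm{d}s\leq C$ uniformly in $m$. Passing to the weak limit yields the corresponding bounds on $\nabla p(t)$ and $\int_0^t\|\partial_t p\|^2\,\mathrm{d}\theta$, and the identity $\alpha\partial_t\nabla\cdot\mathbf{u}=\mathcal{B}\partial_t p$ (using $\mathcal{B}\partial_t=\partial_t\mathcal{B}$ and the $L^2$-boundedness of $\mathcal{B}$) immediately gives the $\int_0^t\|\partial_t\nabla\cdot\mathbf{u}\|^2\,\mathrm{d}\theta$ estimate.

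\textbf{Stage 2 (bounds on $\nabla\cdot\mathbf{H}$, $\nabla\cdot\mathbf{E}$, $\Delta p$, $\nabla\times\mathbf{H}$).} Divergence of the second equation of \eqref{e3.2.1} gives $\mu\partial_t\nabla\cdot\mathbf{H}=0$, so $\|\nabla\cdot\mathbf{H}(t)\|=\|\nabla\cdot\mathbf{H}_0\|\leq C$. Divergence of the first equation (with $\nabla\cdot(\nabla\times\mathbf{H})=0$) yields $\epsilon\partial_t\nabla\cdot\mathbf{E}+\sigma\nabla\cdot\mathbf{E}-L\Delta p=\nabla\cdot\mathbf{j}$, while rearranging the strong form of the fourth equation produces $\kappa\Delta p=(c_0+\mathcal{B})\partial_t p + L\nabla\cdot\mathbf{E} - g$. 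Eliminating $\Delta p$ between these identities and testing with $\nabla\cdot\mathbf{E}$, the cross term $(L^2/\kappa)\|\nabla\cdot\mathbf{E}\|^2$ is strictly dominated by the coercive $\sigma\|\nabla\cdot\mathbf{E}\|^2$ precisely because (H1) reads $L^2<\sigma\kappa$; Young's inequality, Gronwall's lemma, and Stage 1 then give $\|\nabla\cdot\mathbf{E}(t)\|^2\leq C$, whence $\int_0^t\|\Delta p\|^2\,\mathrm{d}s\leq C$ follows at once from the closed-form expression for $\kappa\Delta p$. Finally, rearranging the first equation of \eqref{e3.2.1} pointwise as $\nabla\times\mathbf{H}=\epsilon\partial_t\mathbf{E}+\sigma\mathbf{E}-L\nabla p-\mathbf{j}$ and invoking Theorem \ref{t3.3.5} together with Stage 1 supplies the remaining bound.

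\textbf{Main obstacle.} Stage 1 is the technical crux. The natural scalar energy estimate---testing the fourth equation with $\partial_t p$---is inadmissible at the level of the limiting weak solution, since $\partial_t p$ is only known to lie in $L^2(0,T;L^2)$ rather than in $L^2(0,T;H^1_0)$. The remedy is to work at the Galerkin level, where $(\mathbf{E}_m',\mathbf{H}_m',p_m')\in\mathbf{V}_m$ is a legitimate test triple and the full coupling structure becomes available: the electric--pressure coupling collapses into an exact time derivative, while the stray electric--magnetic term can be absorbed using the uniform curl estimate already supplied by Lemma \ref{l3.3.3}. Hypothesis (H1) enters in two independent places---implicitly in Stage 1 via \eqref{e3.2.9} when controlling the $L$-coupling, and decisively in Stage 2 for the coercivity margin $\sigma - L^2/\kappa > 0$---and the commutativity $\mathcal{B}\partial_t=\partial_t\mathcal{B}$ is what transmutes the pressure estimate into a $\nabla\cdot\mathbf{u}$ estimate.
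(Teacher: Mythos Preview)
Your proposal is correct but takes a genuinely different route from the paper's proof.

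The paper never tests with time derivatives and never passes through the Galerkin level. Instead it works directly with the strong form of the pressure equation \eqref{e3.3.7}, squares both sides, and expands
\[
\|(c_0+\mathcal{B})\partial_t p - \kappa\Delta p\|^2
= \kappa\,\frac{\mathrm{d}}{\mathrm{d}t}\big((c_0+\mathcal{B})\nabla p,\nabla p\big)
+ \|(c_0+\mathcal{B})\partial_t p\|^2
+ \kappa^2\|\Delta p\|^2,
\]
an identity whose cross term becomes an exact time derivative precisely because of the spatial commutativity $\mathcal{B}\partial_{x_i}=\partial_{x_i}\mathcal{B}$ established in Lemma~\ref{l3.2.2}. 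This is then added to the divergence-of-$\mathbf{E}$ inequality \eqref{e3.3.8}, and (H1) in the form $\kappa - L^2/\sigma>0$ closes a single coupled Gronwall yielding $\|\nabla\cdot\mathbf{E}\|$, $\|\nabla p\|$, $\int\|\partial_t p\|^2$, and $\int\|\Delta p\|^2$ simultaneously.

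Your approach decouples this into two stages: first $\|\nabla p\|$ and $\int\|\partial_t p\|^2$ via the standard parabolic trick of testing with $(\mathbf{E}_m',\mathbf{H}_m',p_m')$, then $\|\nabla\cdot\mathbf{E}\|$ and $\int\|\Delta p\|^2$ by direct substitution. The price you pay is having to work at the Galerkin level (so that $p_m'$ is an admissible test function) and having to invoke the uniform curl bound from Lemma~\ref{l3.3.3} to absorb the stray $(\mathbf{H}_m',\nabla\times\mathbf{E}_m)$ term that the three-field coupling produces. What you gain is that you never need Lemma~\ref{l3.2.2}: your argument uses only the temporal commutativity $\mathcal{B}\partial_t=\partial_t\mathcal{B}$ and the $L^2$-boundedness of $\mathcal{B}$. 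Both proofs use (H1) in equivalent forms ($\kappa - L^2/\sigma>0$ versus $\sigma - L^2/\kappa>0$), and both are complete.
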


\begin{proof}
	According to the second equation in \eqref{e3.2.8}, we obtain a differential equation in $\bm{L}^{2}(\Omega)$
	\begin{equation*}
		\mu\frac{\partial}{\partial t}\mathbf{H}(t) + \nabla\times \mathbf{E}(t) = 0.
	\end{equation*}
	Taking divergence on both sides of this equation yields $\frac{\partial}{\partial t} \nabla\cdot \mathbf{H}(t)=0$, which leads to 
	\begin{equation*}
		\|\nabla\cdot\mathbf{H}(t)\|^{2} = \|\nabla\cdot\mathbf{H}_{0}\|^{2} = C.
	\end{equation*}
	Similarly, we obtain two differential equations in $\bm{L}^{2}(\Omega)$ and $L^{2}(\Omega)$, respectively,
	\begin{align}
		\label{e3.3.6}
		&\ \epsilon\frac{\partial}{\partial t}\mathbf{E}(t) + \sigma \mathbf{E}(t) - \nabla\times \mathbf{H}(t) = \mathbf{j}(t) + L\nabla p(t),
		\\
		\label{e3.3.7}
		&\ \frac{\partial}{\partial t}(c_{0}+\mathcal{B})p(t) - \kappa\Delta p(t) = g(t)-L\nabla\cdot \mathbf{E}(t).
	\end{align}
	Taking divergence on both sides of \eqref{e3.3.6}, then taking inner products with $\nabla\cdot\mathbf{E}(t)$ and utilizing Young's inequality, we get
	\begin{align*}
		&\ \epsilon\frac{1}{2}\frac{\mathrm{d}}{\mathrm{d}t}\|\nabla\cdot\mathbf{E}(t)\|^{2}
		+\sigma\|\nabla\cdot\mathbf{E}(t)\|^{2}
		\\
		\leq&\ \frac{1}{4\sigma}\|\nabla\cdot\mathbf{j}(t)\|^{2}+\sigma\|\nabla\cdot\mathbf{E}(t)\|^{2}
		+\frac{L^{2}}{2\sigma}\|\Delta p(t)\|^{2}+\frac{\sigma}{2}\|\nabla\cdot\mathbf{E}(t)\|^{2}.
	\end{align*}
	Integrating over $[0,t]$ yields
	\begin{equation}\label{e3.3.8}
		\begin{aligned}
			&\ \epsilon\|\nabla\cdot\mathbf{E}(t)\|^{2}
			\leq \epsilon\|\nabla\cdot\mathbf{E}_{0}\|^{2}
			+\frac{1}{2\sigma}\int_{0}^{t}{\|\nabla\cdot\mathbf{j}(\theta)\|^{2}}{\, \mathrm{d}\theta}
			\\
			&\qquad\qquad\qquad +\frac{L^{2}}{\sigma}\int_{0}^{t}{\|\Delta p(\theta)\|^{2}}{\, \mathrm{d}\theta}+\sigma\int_{0}^{t}{\|\nabla\cdot\mathbf{E}(\theta)\|^{2}}{\, \mathrm{d}\theta}.
		\end{aligned}
	\end{equation}
	From \eqref{e3.3.7}, it follows that
	\begin{equation*}
		\|\frac{\partial}{\partial t}(c_{0}+\mathcal{B})p(t) - \kappa\Delta p(t)\|^{2}
		\leq 2\|g(t)\|^{2}+2L^{2}\|\nabla\cdot \mathbf{E}(t)\|^{2}.
	\end{equation*}
	In terms of \eqref{e3.2.7}, direct computation yields
	\begin{align*}
		&\ \|\frac{\partial}{\partial t}(c_{0}+\mathcal{B})p(t) - \kappa\Delta p(t)\|^{2}
		\\
		=&\ \kappa\frac{\mathrm{d}}{\mathrm{d}t}\big((c_{0}+\mathcal{B})\nabla p(t),\nabla p(t)\big) + \|(c_{0}+\mathcal{B})\frac{\partial}{\partial t}p(t)\|^{2} + \kappa^{2}
		\|\Delta p(t)\|^{2}
		\\
		\geq&\ \kappa\frac{\mathrm{d}}{\mathrm{d}t}\big((c_{0}+\mathcal{B})\nabla p(t),\nabla p(t)\big) + c_{0}^{2} \|\frac{\partial}{\partial t}p(t)\|^{2} + \kappa^{2}
		\|\Delta p(t)\|^{2}.
	\end{align*}
	Integrating the above two inequalities over $[0,t]$, we have
	\begin{equation}\label{e3.3.9}
		\begin{aligned}
			&\ \big((c_{0}+\mathcal{B})\nabla p(t),\nabla p(t)\big) \!+\! \frac{c_{0}^{2}}{\kappa}\int_{0}^{t}{\|\frac{\partial}{\partial t}p(\theta)\|^{2}}{\, \mathrm{d}\theta} \!+\! \kappa\int_{0}^{t}{\|\Delta p(\theta)\|^{2}}{\, \mathrm{d}\theta}
			\\
			\leq&\ \big((c_{0}+\mathcal{B})\nabla p_{0},\nabla p_{0}\big) \!+\! \frac{2}{\kappa}\int_{0}^{t}{\|g(\theta)\|^{2}}{\, \mathrm{d}\theta} \!+\! \frac{2L^{2}}{\kappa}\int_{0}^{t}{\|\nabla\cdot \mathbf{E}(\theta)\|^{2}}{\, \mathrm{d}\theta}.
		\end{aligned}
	\end{equation}
	Summing up \eqref{e3.3.8} and \eqref{e3.3.9} gives
	\begin{align*}
		&\ \epsilon\|\nabla\!\cdot\!\mathbf{E}(t)\|^{2} \!+\! c_{0}\|\nabla p(t)\|^{2} \!+\! \frac{c_{0}^{2}}{\kappa}\int_{0}^{t}{\|\frac{\partial}{\partial t}p(\theta)\|^{2}}{\, \mathrm{d}\theta}
		\!+\! (\kappa-\frac{L^{2}}{\sigma})\!\int_{0}^{t}{\|\Delta p(\theta)\|^{2}}{\, \mathrm{d}\theta}
		\\
		\leq&\ \epsilon\|\nabla\cdot\mathbf{E}_{0}\|^{2}+C\|\nabla p_{0}\|^{2}
		+\frac{1}{2\sigma}\int_{0}^{t}{\|\nabla\cdot\mathbf{j}(\theta)\|^{2}}{\, \mathrm{d}\theta}+\frac{2}{\kappa}\int_{0}^{t}{\|g(\theta)\|^{2}}{\, \mathrm{d}\theta}
		\\
		&\ +(\sigma+\frac{2L^{2}}{\kappa})\int_{0}^{t}{\|\nabla\cdot\mathbf{E}(\theta)\|^{2}}.
	\end{align*}
	From assumption (H1), it follows that $\kappa-\frac{L^{2}}{\sigma}> 0$. Then, applying Gronwall's lemma, we conclude that  there exists a constant $C>0$ such that
	\begin{equation*}
		\|\nabla\cdot\mathbf{E}(t)\|^{2}+\|\nabla p(t)\|^{2} \leq C.
	\end{equation*}
	Thus, the rest of proof follows from this estimate immediately.
\end{proof}

Differentiating with respect to $t$ on both sides of \eqref{e3.2.4}, we get an equation similar to \eqref{e3.2.4} with unknowns $(\frac{\partial\mathbf{E}}{\partial t},\frac{\partial\mathbf{H}}{\partial t},\frac{\partial\mathbf{u}}{\partial t},\frac{\partial p}{\partial t})$. Under assumptions (H1) and (H2$^{\prime}$), we apply Theorem \ref{t3.3.5} and Lemma \ref{l3.3.6} to obtain estimates of higher order derivatives of the solution $(\mathbf{E},\mathbf{H},\mathbf{u},p)$ to \eqref{e3.2.4} with respect to $t$. We can repeat this procedure to raise the regularity of solution. In next lemma, we present the higher order regularity estimates and omit the proof.

\begin{lemma}\label{l3.3.7}
	Assume (H1) and (H2$^{\prime}$). Let $(\mathbf{E},\mathbf{H},\mathbf{u},p)$ be a solution to \eqref{e3.2.4}. Then there exists a constant $C>0$ such that for all $t\in[0,T]$,
	\begin{align*}
		\|\mathbf{E}(t)\|_{2}^{2}+\|\mathbf{H}(t)\|_{2}^{2}+\|p(t)\|_{4}^{2} \leq&\ C,
		\\
		\int_{0}^{t}{\!\|\frac{\partial}{\partial t}\mathbf{E}(\theta)\|_{2}^{2}}{\, \mathrm{d}\theta}
		\!+\!\! \int_{0}^{t}{\!\|\frac{\partial}{\partial t}\mathbf{H}(\theta)\|_{2}^{2}}{\, \mathrm{d}\theta}
		\!+\!\! \int_{0}^{t}{\!\|\frac{\partial}{\partial t}\mathbf{u}(\theta)\|_{3}^{2}}{\, \mathrm{d}\theta}
		\!+\!\! \int_{0}^{t}{\!\|\frac{\partial}{\partial t}p(\theta)\|_{2}^{2}}{\, \mathrm{d}\theta}
		\leq&\ C,
		\\
		\int_{0}^{t}{\|\nabla \!\times\! \frac{\partial}{\partial t}\mathbf{E}(\theta)\|^{2}}{\, \mathrm{d}\theta}
		\!+\! \int_{0}^{t}{\|\nabla \!\times\! \frac{\partial}{\partial t}\mathbf{H}(\theta)\|^{2}}{\, \mathrm{d}\theta}
		\!+\! \int_{0}^{t}{\|\frac{\partial^{2}}{\partial t^{2}}p(\theta)\|^{2}}{\, \mathrm{d}\theta}
		\leq&\ C.
	\end{align*}
\end{lemma}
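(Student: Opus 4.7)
The plan is an inductive bootstrap in the order of time differentiation, with Theorem \ref{t3.3.5} and Lemma \ref{l3.3.6} serving as the base case. Formally differentiating the variational equation \eqref{e3.2.4} with respect to $t$ yields a system of identical algebraic structure governing $(\partial_{t}\mathbf{E},\partial_{t}\mathbf{H},\partial_{t}\mathbf{u},\partial_{t}p)$ with source terms $(\partial_{t}\mathbf{j},\partial_{t}g)$ in place of $(\mathbf{j},g)$. The required initial data are recovered by reading the strong form \eqref{e3.2.1} at $t=0$: for example $\partial_{t}\mathbf{E}(0)=\epsilon^{-1}(\mathbf{j}(0)-\sigma\mathbf{E}_{0}+\nabla\times\mathbf{H}_{0}+L\nabla p_{0})$, $\partial_{t}\mathbf{H}(0)=-\mu^{-1}\nabla\times\mathbf{E}_{0}$, and $(c_{0}+\mathcal{B})\partial_{t}p(0)=g(0)+\kappa\Delta p_{0}-L\nabla\cdot\mathbf{E}_{0}$, which lies in the required space since $c_{0}+\mathcal{B}$ has a bounded inverse. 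Under (H2$'$) these initial values lie in all the spaces demanded by Theorem \ref{t3.3.5} and Lemma \ref{l3.3.6}, so a direct application of those results to the differentiated system delivers bounds on $\|\partial_{t}\mathbf{E}\|$, $\|\partial_{t}\mathbf{H}\|$, $\|\partial_{t}p\|$, $\|\partial_{t}\nabla p\|$, $\|\nabla\times\partial_{t}\mathbf{H}\|$, $\int_{0}^{t}\!\|\partial_{t}^{2}p\|^{2}\,\mathrm{d}\theta$ and $\int_{0}^{t}\!\|\Delta\partial_{t}p\|^{2}\,\mathrm{d}\theta$. Iterating the procedure once more (differentiating a second time in $t$) gives the analogous $H^{1}$-in-space bounds on $\partial_{t}\mathbf{E}$ and $\partial_{t}\mathbf{H}$ that are needed below, as well as $\int_{0}^{t}\|\nabla\times\partial_{t}\mathbf{E}\|^{2}\,\mathrm{d}\theta$ via $\nabla\times\partial_{t}\mathbf{E}=-\mu\partial_{t}^{2}\mathbf{H}$.

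To upgrade these temporal bounds to the claimed spatial regularity, I would exploit the PDE structure. The second equation of \eqref{e3.2.1} shows $\nabla\cdot\mathbf{H}=\nabla\cdot\mathbf{H}_{0}$ is preserved in time, while $\nabla\cdot\mathbf{E}$ is controlled by Lemma \ref{l3.3.6}; combined with $\nabla\times\mathbf{H}=\mathbf{j}-\epsilon\partial_{t}\mathbf{E}-\sigma\mathbf{E}+L\nabla p$, $\nabla\times\mathbf{E}=-\mu\partial_{t}\mathbf{H}$, the boundary condition $\mathbf{E}\times\mathbf{n}=\mathbf{0}$, and the convexity of $\Omega$, the standard div-curl elliptic regularity estimate $\|\mathbf{v}\|_{k}\le C(\|\nabla\times\mathbf{v}\|_{k-1}+\|\nabla\cdot\mathbf{v}\|_{k-1}+\|\mathbf{v}\|)$ yields $\|\mathbf{E}\|_{2}$ and $\|\mathbf{H}\|_{2}$ once the right-hand sides are controlled in $H^{1}$, which holds after differentiating the first two equations once more in $t$ and using the bounds already obtained. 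For $\|p\|_{4}$ I would regard the fourth equation as an elliptic boundary-value problem $-\kappa\Delta p=g-L\nabla\cdot\mathbf{E}-\partial_{t}(c_{0}+\mathcal{B})p$ with homogeneous Dirichlet data and bootstrap through $H^{2},H^{3},H^{4}$ by successively controlling the right-hand side in $L^{2},H^{1},H^{2}$, using Lemma \ref{l3.2.2} at each stage to commute $\mathcal{B}$ past a spatial derivative. Finally, $\|\partial_{t}\mathbf{u}\|_{3}$ follows by applying elliptic regularity to $-\lambda_{c}\nabla(\nabla\cdot\partial_{t}\mathbf{u})-G\Delta\partial_{t}\mathbf{u}+\alpha\nabla\partial_{t}p=\mathbf{0}$ once $\partial_{t}p$ is known in $H^{2}$.

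The main obstacle is the bookkeeping for the nonlocal operator $\mathcal{B}$: repeatedly raising the spatial regularity of $p$ through the elliptic equation requires commuting spatial derivatives past $\mathcal{B}$ acting on $H^{1}_{0}$-functions, which is exactly the reason Lemma \ref{l3.2.2} was proved, together with control of $\mathcal{B}\partial_{t}^{k}p$ in higher-order norms through the defining elliptic problem for $\partial_{t}^{k}\mathbf{u}$. A secondary technical difficulty is ensuring that the iterated initial data $(\partial_{t}^{k}\mathbf{E}(0),\partial_{t}^{k}\mathbf{H}(0),\partial_{t}^{k}p(0))$ remain bounded in the appropriate spaces and are compatible with the boundary conditions; both are absorbed into the blanket assumption (H2$'$) of "sufficiently smooth" data, which is precisely why the authors replace (H2) here rather than stipulating a finite list of compatibility conditions. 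Because each individual step is a routine repetition of the energy arguments already carried out, the detailed proof is omitted.
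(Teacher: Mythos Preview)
Your proposal is correct and follows essentially the same approach as the paper, which omits the proof after remarking that one differentiates \eqref{e3.2.4} in $t$, reapplies Theorem~\ref{t3.3.5} and Lemma~\ref{l3.3.6} to the resulting system for $(\partial_{t}\mathbf{E},\partial_{t}\mathbf{H},\partial_{t}\mathbf{u},\partial_{t}p)$, and iterates. Your additional detail on recovering the spatial $H^{2}$ and $H^{4}$ norms via div--curl regularity and elliptic bootstrap for $p$ is a natural way to flesh out what the paper leaves entirely implicit.
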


\section{Splitting approximation and convergence analysis}\label{s3.4}
In this section, we introduce a splitting approximate solution to \eqref{e3.2.4} and derive a priori estimates. Then, we prove that the splitting approximation converges to the exact solution with first-order in time.

\subsection{Splitting approximate solution}\label{s3.4.1}
For any integer $N>0$, let $\tau=T/N$ be the temporal step-size and $t_{n}=n\tau$ $(0\leq n\leq N)$. Denote by $I_{n}:=(t_{n-1},t_{n}]$ $(1\leq n\leq N)$. We define the splitting approximate solution $(\tilde{\mathbf{E}}^{n},\tilde{\mathbf{H}}^{n},\tilde{\mathbf{u}}^{n},\tilde{p}^{n})$ $(0\leq n\leq N)$ to \eqref{e3.2.4} as follows. 

We first split \eqref{e3.2.8} into two set of variational equations: seek 
$(\mathbf{E}_{1},\mathbf{H}_{1},p_{1}) \in L^{2}(0,T;\mathbb{S}) \cap H^{1}(0,T;\mathbb{H})$ such that for all $t\in[0, T]$ and $(\mathbf{D},\mathbf{B},q)\in \mathbb{S}$,

\begin{equation}\label{e3.4.3}
	\begin{aligned}
		& \big(\epsilon\frac{\partial}{\partial t}\mathbf{E}_{1}(t), \mathbf{D}\big) \!\!+\!\! \big(\sigma \mathbf{E}_{1}(t), \mathbf{D}\big) \!\!-\!\! \big(\nabla \!\times\! \mathbf{H}_{1}(t), \mathbf{D}\big) \!\!-\!\! \big(L\nabla p_{1}(t), \mathbf{D}\big) \!=\! \big(\mathbf{j}(t), \mathbf{D}\big),
		\\
		& \big(\mu\frac{\partial}{\partial t}\mathbf{H}_{1}(t), \mathbf{B}\big) + \big(\nabla\times \mathbf{E}_{1}(t), \mathbf{B}\big) = 0,
		\\
		& \big(\frac{\partial}{\partial t}(c_{0} + \mathcal{B})p_{1}(t), q\big) = 0,
	\end{aligned}
\end{equation}
 and seek
$(\mathbf{E}_{2},\mathbf{H}_{2},p_{2}) \in L^{2}(0,T;\mathbb{S}) \cap H^{1}(0,T;\mathbb{H})$ such that for all $t\in[0, T]$ and $(\mathbf{D},\mathbf{B},q)\in \mathbb{S}$,
\begin{equation}\label{e3.4.4}
	\begin{aligned}
		& \big(\epsilon\frac{\partial}{\partial t}\mathbf{E}_{2}(t), \mathbf{D}\big) = 0,
		\\
		& \big(\mu\frac{\partial}{\partial t}\mathbf{H}_{2}(t), \mathbf{B}\big) = 0,
		\\
		& \big(\frac{\partial}{\partial t}(c_{0} + \mathcal{B})p_{2}(t), q\big) + \big(\kappa\nabla p_{2}(t), \nabla q\big) - \big(L\mathbf{E}_{2}(t), \nabla q\big) = \big(g(t), q\big).
	\end{aligned}
\end{equation}

Next, we define the splitting approximation $\tilde{\mathbf{U}}^{n}:=(\tilde{\mathbf{E}}^{n},\tilde{\mathbf{H}}^{n},\tilde{p}^{n})$ to \eqref{e3.2.8} by induction. Take $\tilde{\mathbf{U}}^{0} = (\mathbf{E}_{0},\mathbf{H}_{0},p_{0})$ and assume that we have already known approximations $\tilde{\mathbf{U}}^{n-1}$ for $1\leq n\leq N$. We define the next approximation $\tilde{\mathbf{U}}^{n}$  using  the following splitting procedure: 
\begin{enumerate}
	\item[(i)] solve \eqref{e3.4.3} with initial value $\tilde{\mathbf{U}}^{n-1}$ at $t_{n-1}$ to get a solution \begin{equation*}
		\mathbf{U}_{1\tau}(t):=(\mathbf{E}_{1\tau}(t),\mathbf{H}_{1\tau}(t),p_{1\tau}(t)), \quad t\in I_{n},
	\end{equation*}
	\item[(ii)] solve \eqref{e3.4.4} with initial value $\mathbf{U}_{1\tau}(t_{n})$ at $t_{n-1}$ to get a solution
	\begin{equation*}
		\mathbf{U}_{2\tau}(t):=(\mathbf{E}_{2\tau}(t),\mathbf{H}_{2\tau}(t),p_{2\tau}(t)), \quad t\in I_{n},
	\end{equation*}
	\item[(iii)] define $\tilde{\mathbf{U}}^{n} = \mathbf{U}_{2\tau}(t_{n})$.
\end{enumerate}
From \eqref{e3.4.3} and \eqref{e3.4.4}, it follows that $p_{1\tau}(t)\equiv \tilde{p}^{n-1}$, $\mathbf{E}_{2\tau}(t)\equiv \mathbf{E}_{1\tau}(t_{n})$ and $\mathbf{H}_{2\tau}(t)\equiv \mathbf{H}_{1\tau}(t_{n})$ on each $I_{n}$. Furthermore, the functions $\mathbf{E}_{1\tau}(t)$, $\mathbf{H}_{1\tau}(t)$ and $p_{2\tau}(t)$ are continuous in $t\in[0,T]$ and they interpolate the splitting approximate solution $\tilde{\mathbf{U}}^{n}$ at $t_{n}$, i.e., 
\begin{equation*}
	\tilde{\mathbf{U}}^{n} = (\mathbf{E}_{1\tau}(t_{n}),\mathbf{H}_{1\tau}(t_{n}),p_{2\tau}(t_{n})).
\end{equation*}

Finally, we solve \eqref{e3.2.6} with $p=p_{2\tau}(t)$ and obtain a unique continuous function $\mathbf{u}_{2\tau}(t)$ for $t\in[0,T]$. Let $\tilde{\mathbf{u}}^{0}=\mathbf{u}_{0}$ and $\tilde{\mathbf{u}}^{n}=\mathbf{u}_{2\tau}(t_{n})$, then we get a splitting approximate solution $(\tilde{\mathbf{E}}^{n},\tilde{\mathbf{H}}^{n},\tilde{\mathbf{u}}^{n},\tilde{p}^{n})$ $(0\leq n\leq N)$ to \eqref{e3.2.4}.

For the convenience of forthcoming analysis, we use the terminology of solution operators to reformulate the splitting approximate solution. For any $0\leq s\leq t\leq T$, let $\mathbf{R}_{s}^{t}:=\big((\mathcal{E}_{1})_{s}^{t},(\mathcal{H}_{1})_{s}^{t},(\mathcal{P}_{1})_{s}^{t}\big)$ with $\mathbf{R}_{s}^{s}=\mathbf{I}$ and $\mathbf{\Gamma}_{s}^{t}:=\big((\mathcal{E}_{2})_{s}^{t},(\mathcal{H}_{2})_{s}^{t},(\mathcal{P}_{2})_{s}^{t}\big)$ with $\mathbf{\Gamma}_{s}^{s}=\mathbf{I}$ be the solution operators to \eqref{e3.4.3} and \eqref{e3.4.4}, respectively, where $\mathbf{I}$ is the identity operator. Obviously, all of $(\mathcal{P}_{1})_{s}^{t}$, $(\mathcal{E}_{2})_{s}^{t}$ and $(\mathcal{H}_{2})_{s}^{t}$ are identity operators. Therefore, for any initial value $\mathbf{U}_{0}$ at time $s$, the functions
\begin{equation*}
	\mathbf{R}_{s}^{t}\mathbf{U}_{0} = \big((\mathcal{E}_{1})_{s}^{t}\mathbf{U}_{0},(\mathcal{H}_{1})_{s}^{t}\mathbf{U}_{0},p_{0}\big),
	\quad
	\mathbf{\Gamma}_{s}^{t}\mathbf{U}_{0} = \big(\mathbf{E}_{0},\mathbf{H}_{0},(\mathcal{P}_{2})_{s}^{t}\mathbf{U}_{0}\big).
\end{equation*}
are the unique solutions to \eqref{e3.4.3} and \eqref{e3.4.4}, respectively. 
For any $t>0$, there exists a unique integer $1\leq n\leq N$ such that $t\in I_{n}$ and then we have
\begin{equation*}
	\mathbf{U}_{1\tau}(t) = \mathbf{R}_{t_{n-1}}^{t}\tilde{\mathbf{U}}^{n-1},
	\
	\mathbf{U}_{2\tau}(t) = \mathbf{\Gamma}_{t_{n-1}}^{t}\mathbf{R}_{t_{n-1}}^{t_{n}}\tilde{\mathbf{U}}^{n-1}.
\end{equation*}
Then, the splitting approximate solution $\tilde{\mathbf{U}}^{n}$ can be rewritten as
\begin{equation*}
	\tilde{\mathbf{U}}^{0} = (\mathbf{E}_{0},\mathbf{H}_{0},p_{0}),
	\qquad \tilde{\mathbf{U}}^{n} = \mathbf{\Gamma}_{t_{n-1}}^{t_{n}}\mathbf{R}_{t_{n-1}}^{t_{n}}\tilde{\mathbf{U}}^{n-1}, \quad 1\leq n\leq N.
\end{equation*}

In fact, on each $I_{n}$, the function $(\mathbf{E}_{1\tau}(t),\mathbf{H}_{1\tau}(t))$ satisfies a variational equation with initial value $(\tilde{\mathbf{E}}^{n-1},\tilde{\mathbf{H}}^{n-1})$ at $t_{n-1}$ for all $(\mathbf{D},\mathbf{B})\in \bm{H}_{0}(\mathbf{curl}) \times \bm{H}(\mathbf{curl})$,
\begin{equation}\label{e3.4.5}
	\begin{aligned}
		& \big(\epsilon\frac{\partial}{\partial t}\mathbf{E}_{1\tau}(t),\mathbf{D}\big) 
		\!\!+\!\! \big(\sigma \mathbf{E}_{1\tau}(t),\mathbf{D}\big) 
		\!\!-\!\! \big(\nabla\!\!\times\!\! \mathbf{H}_{1\tau}(t),\mathbf{D}\big) 
		\!\!=\!\! \big(L\nabla \tilde{p}^{n-1},\mathbf{D}\big) \!\!+\!\! \big(\mathbf{j}(t),\mathbf{D}\big),
		\\
		& \big(\mu\frac{\partial}{\partial t}\mathbf{H}_{1\tau}(t),\mathbf{B}\big) 
		+ \big(\nabla\times \mathbf{E}_{1\tau}(t),\mathbf{B}\big) = 0, 
	\end{aligned}
\end{equation}
and  $p_{2\tau}(t)$ satisfies a variational equation with initial value $\tilde{p}^{n-1}$ at $t_{n-1}$:  for all $q\in H^{1}_{0}$,
\begin{equation}\label{e3.4.7}
	\big(\frac{\partial}{\partial t}(c_{0}+\mathcal{B})p_{2\tau}(t),q\big) + \big(\kappa\nabla p_{2\tau}(t),\nabla q\big) = \big(L\mathbf{E}_{1\tau}(t_{n}),\nabla q\big) + \big(g(t),q\big).
\end{equation}
Thus, the function $(\mathbf{u}_{2\tau}(t),p_{2\tau}(t))$ satisfies a variational equation with initial value $(\tilde{\mathbf{u}}^{n-1},\tilde{p}^{n-1})$ at $t_{n-1}$ for all $(\mathbf{v},q)\in \bm{H}^{1}_{0}\times H^{1}_{0}$,
\begin{equation}\label{e3.4.6}
	\begin{aligned}
		& (\lambda_{c}\nabla\cdot \mathbf{u}_{2\tau}(t), \nabla\cdot \mathbf{v}) + (G\nabla \mathbf{u}_{2\tau}(t), \nabla \mathbf{v}) - (p_{2\tau}(t), \alpha \nabla\cdot \mathbf{v}) = 0,
		\\
		& (\frac{\partial}{\partial t}(c_{0}p_{2\tau}(t) \!+\! \alpha\nabla \!\cdot\! \mathbf{u}_{2\tau}(t)), q) \!\!+\!\! (\kappa\nabla p_{2\tau}(t), \nabla q) \!\!-\!\! (L\mathbf{E}_{1\tau}(t_{n}), \nabla q) \!\!=\!\! (g(t), q).
	\end{aligned}
\end{equation}

\subsection{A priori estimates}\label{s3.4.2} 
In this part, we investigate the regularity of solutions to \eqref{e3.4.5} and \eqref{e3.4.6}.
We start with the estimates for solutions to \eqref{e3.4.3} and \eqref{e3.4.4}.

\begin{lemma}\label{l3.4.1}
	Assume (H2$^{\prime}$). Let $(\mathbf{E}_{1},\mathbf{H}_{1},p_{1})$ and $(\mathbf{E}_{2},\mathbf{H}_{2},p_{2})$ be solutions to \eqref{e3.4.3} and \eqref{e3.4.4}, respectively. Then for any $0\leq s\leq t\leq T$, there hold that
	\begin{align*}
		(i)\ &\ \epsilon\|\mathbf{E}_{1}(t)\|^{2}+\mu\|\mathbf{H}_{1}(t)\|^{2}
		\leq \epsilon\|\mathbf{E}_{1}(s)\|^{2}+ \mu\|\mathbf{H}_{1}(s)\|^{2}
		\\
		+&\ \frac{L^{2}}{\sigma}\int_{s}^{t}{\|\nabla p_{1}(\theta)\|^{2}}{\, \mathrm{d}\theta}
		+\sigma\int_{s}^{t}{\|\mathbf{E}_{1}(\theta)\|^{2}}{\, \mathrm{d}\theta}+\frac{1}{2\sigma}\int_{s}^{t}{\|\mathbf{j}(\theta)\|^{2}}{\, \mathrm{d}\theta},
		\\
		(ii)\ &\ \big((c_{0}+\mathcal{B})p_{2}(t),p_{2}(t)\big)+\kappa\int_{s}^{t}{\|\nabla p_{2}(\theta)\|^{2}}{\, \mathrm{d}\theta}
		\leq \big((c_{0}+\mathcal{B})p_{2}(s),p_{2}(s)\big)
		\\
		+&\ \frac{L^{2}}{\kappa}\int_{s}^{t}{\|\mathbf{E}_{2}(\theta)\|^{2}}{\, \mathrm{d}\theta}+\int_{s}^{t}{\|g(\theta)\|^{2}}{\, \mathrm{d}\theta}+\int_{s}^{t}{\|p_{2}(\theta)\|^{2}}{\, \mathrm{d}\theta},
		\\
		(iii)\ &\ \epsilon\|\nabla\cdot\mathbf{E}_{1}(t)\|^{2}
		\leq \epsilon\|\nabla\cdot\mathbf{E}_{1}(s)\|^{2}
		+\frac{1}{2\sigma}\int_{s}^{t}{\|\nabla\cdot\mathbf{j}(\theta)\|^{2}}{\, \mathrm{d}\theta}
		\\
		+&\ \frac{L^{2}}{\sigma}\int_{s}^{t}{\|\Delta p_{1}(\theta)\|^{2}}{\, \mathrm{d}\theta}+\sigma\int_{s}^{t}{\|\nabla\cdot\mathbf{E}_{1}(\theta)\|^{2}}{\, \mathrm{d}\theta},
		\\
		(iv)\ &\ \big((c_{0}+\mathcal{B})\nabla p_{2}(t),\nabla p_{2}(t)\big) \!+\! \frac{c_{0}^{2}}{\kappa}\int_{s}^{t}{\|\frac{\partial}{\partial t}p_{2}(\theta)\|^{2}}{\, \mathrm{d}\theta} \!+\! \kappa\int_{s}^{t}{\|\Delta p_{2}(\theta)\|^{2}}{\, \mathrm{d}\theta}
		\\
		\leq&\ \big((c_{0}+\mathcal{B})\nabla p_{2}(s),\nabla p_{2}(s)\big) \!+\! \frac{2}{\kappa}\int_{s}^{t}{\|g(\theta)\|^{2}}{\, \mathrm{d}\theta} \!+\! \frac{2L^{2}}{\kappa}\int_{s}^{t}{\|\nabla\cdot \mathbf{E}_{2}(\theta)\|^{2}}{\, \mathrm{d}\theta}.
	\end{align*}
\end{lemma}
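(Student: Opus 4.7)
The four estimates split naturally into two groups: (i) and (iii) concern the Maxwell subsystem \eqref{e3.4.3} whereas (ii) and (iv) concern the pressure subsystem \eqref{e3.4.4}. Each is an energy-type inequality obtained by testing with a carefully chosen $(\mathbf{D},\mathbf{B},q)$, followed by Young's inequality and integration over $[s,t]$. The plan is to imitate, in the split setting, the arguments used in Lemmas \ref{l3.3.3} and \ref{l3.3.6}.

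For (i), I would test the first equation of \eqref{e3.4.3} with $\mathbf{E}_{1}(t)$ and the second with $\mathbf{H}_{1}(t)$, and then add. Since $\mathbf{E}_{1}\in\bm{H}_{0}(\mathbf{curl})$, integration by parts gives $(\nabla\times\mathbf{H}_{1},\mathbf{E}_{1})=(\mathbf{H}_{1},\nabla\times\mathbf{E}_{1})$ with no boundary contribution, so the two curl pairings cancel, leaving $\tfrac{1}{2}\tfrac{\mathrm{d}}{\mathrm{d}t}(\epsilon\|\mathbf{E}_{1}\|^{2}+\mu\|\mathbf{H}_{1}\|^{2})+\sigma\|\mathbf{E}_{1}\|^{2}=L(\nabla p_{1},\mathbf{E}_{1})+(\mathbf{j},\mathbf{E}_{1})$. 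Splitting the cross-products with Young weights $\sigma$ and $2\sigma$, multiplying by $2$ and integrating over $[s,t]$ delivers exactly the coefficients stated in (i). Part (ii) is analogous: take $q=p_{2}(t)$ in the third equation of \eqref{e3.4.4}, use the self-adjointness of $c_{0}+\mathcal{B}$ together with $\mathcal{B}\partial_{t}=\partial_{t}\mathcal{B}$ to write $\big((c_{0}+\mathcal{B})\partial_{t}p_{2},p_{2}\big)=\tfrac{1}{2}\tfrac{\mathrm{d}}{\mathrm{d}t}\big((c_{0}+\mathcal{B})p_{2},p_{2}\big)$, then bound $(L\mathbf{E}_{2},\nabla p_{2})$ and $(g,p_{2})$ by Young's inequality with weight $\kappa$ to absorb half of $\kappa\|\nabla p_{2}\|^{2}$.

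For (iii), the strong form of the first equation of \eqref{e3.4.3} reads $\epsilon\partial_{t}\mathbf{E}_{1}+\sigma\mathbf{E}_{1}-\nabla\times\mathbf{H}_{1}-L\nabla p_{1}=\mathbf{j}$. Taking divergence kills the curl term (since $\nabla\cdot(\nabla\times\mathbf{H}_{1})=0$) and produces the scalar evolution $\epsilon\partial_{t}\nabla\cdot\mathbf{E}_{1}+\sigma\nabla\cdot\mathbf{E}_{1}=\nabla\cdot\mathbf{j}+L\Delta p_{1}$. Testing with $\nabla\cdot\mathbf{E}_{1}$, applying Young's inequality to the two source pieces with weights $2\sigma$ and $\sigma$, and integrating over $[s,t]$ yields (iii).

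Part (iv) is the delicate one and essentially repeats the mechanism used in the proof of Lemma \ref{l3.3.6}, now in the split setting. Under (H2$'$) the third equation of \eqref{e3.4.4} may be integrated by parts to the strong form $\partial_{t}(c_{0}+\mathcal{B})p_{2}-\kappa\Delta p_{2}=g-L\nabla\cdot\mathbf{E}_{2}$. Squaring in $L^{2}$, the right-hand side is immediately controlled by $2\|g\|^{2}+2L^{2}\|\nabla\cdot\mathbf{E}_{2}\|^{2}$. Writing the left-hand side as $\|A\|^{2}-2(A,B)+\|B\|^{2}$ with $A=\partial_{t}(c_{0}+\mathcal{B})p_{2}$ and $B=\kappa\Delta p_{2}$, the cross term becomes, via integration by parts and $p_{2}\in H^{1}_{0}$, $(A,B)=-\kappa\big(\nabla\partial_{t}(c_{0}+\mathcal{B})p_{2},\nabla p_{2}\big)$; invoking Lemma \ref{l3.2.2} to commute $\nabla$ through $\mathcal{B}$, and then self-adjointness, converts this into $-\tfrac{\kappa}{2}\tfrac{\mathrm{d}}{\mathrm{d}t}\big((c_{0}+\mathcal{B})\nabla p_{2},\nabla p_{2}\big)$. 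Meanwhile $\|A\|^{2}=\|(c_{0}+\mathcal{B})\partial_{t}p_{2}\|^{2}\geq c_{0}^{2}\|\partial_{t}p_{2}\|^{2}$ by \eqref{e3.2.7} and $\|B\|^{2}=\kappa^{2}\|\Delta p_{2}\|^{2}$. Dividing by $\kappa$ and integrating from $s$ to $t$ produces (iv). The main obstacle is exactly this commutation-plus-self-adjointness argument: it is only Lemma \ref{l3.2.2} that allows $\nabla$ to pass through $\mathcal{B}$, and the homogeneous boundary condition on $p_{2}$ is what makes the boundary contribution from integration by parts disappear.
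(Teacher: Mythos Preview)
Your proposal is correct and follows essentially the same approach as the paper: test \eqref{e3.4.3} with $(\mathbf{E}_1,\mathbf{H}_1)$ for (i), test \eqref{e3.4.4} with $p_2$ for (ii), and for (iii)--(iv) the paper simply invokes the already-computed identities \eqref{e3.3.8} and \eqref{e3.3.9} from the proof of Lemma~\ref{l3.3.6}, which are precisely the divergence and square-the-strong-form arguments you spell out. One small remark on (iv): the boundary term in the integration by parts $(A,\Delta p_2)=-(\nabla A,\nabla p_2)$ with $A=(c_0+\mathcal{B})\partial_t p_2$ vanishes not because $p_2\in H_0^1$ per se, but because $A$ itself has zero trace (which under (H2$'$) follows from sufficient regularity of the associated $\mathbf{u}$); the paper also glosses over this point.
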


\begin{proof}
	Taking $(\mathbf{D},\mathbf{B}) = (\mathbf{E}_{1}(t),\mathbf{H}_{1}(t))$ in the first two equations in \eqref{e3.4.3} and applying Young's inequality, we have
	\begin{align*}
		&\ \frac{1}{2}\frac{\mathrm{d}}{\mathrm{d}t}\big(\epsilon\|\mathbf{E}_{1}(t)\|^{2}+ \mu\|\mathbf{H}_{1}(t)\|^{2}\big)+\sigma\|\mathbf{E}_{1}(t)\|^{2}
		\\
		\leq&\ \frac{L^{2}}{2\sigma}\|\nabla p_{1}(t)\|^{2}+\frac{\sigma}{2}\|\mathbf{E}_{1}(t)\|^{2}
		+\frac{1}{4\sigma}\|\mathbf{j}(t)\|^{2}+\sigma\|\mathbf{E}_{1}(t)\|^{2}.
	\end{align*}
	Integrating  over $[s,t]$ yields $(i)$.
	Taking $q_{2}=p_{2}(t)$ in the third equation in \eqref{e3.4.4} and utilizing Young's inequality, we get
	\begin{align*}
		& \frac{1}{2}\frac{\mathrm{d}}{\mathrm{d}t}\big((c_{0}+\mathcal{B})p_{2}(t),p_{2}(t)\big)+\kappa\|\nabla p_{2}(t)\|^{2}
		\\
		\leq&\ \frac{L^{2}}{2\kappa}\|\mathbf{E}_{2}(t)\|^{2}+\frac{\kappa}{2}\|\nabla p_{2}(t)\|^{2}+\frac{1}{2}\|g(t)\|^{2}+\frac{1}{2}\|p_{2}(t)\|^{2}.
	\end{align*}
	Integrating over $[s,t]$, we obtain $(ii)$. 
	The  estimates in $(iii)$ and $(iv)$ follow directly from \eqref{e3.3.8} and \eqref{e3.3.9}.
\end{proof}

Then, we obtain regularity estimates for solutions to \eqref{e3.4.5} and \eqref{e3.4.6}.

\begin{lemma}\label{l3.4.2}
	Assume (H1) and (H2$^{\prime}$). Let $(\mathbf{E}_{1\tau},\mathbf{H}_{1\tau})$ and $(\mathbf{u}_{2\tau},p_{2\tau})$ be solutions to \eqref{e3.4.5} and \eqref{e3.4.6}, respectively. Then there exists a constant $C>0$ independent of $\tau$ such that for all $t\in [0,T]$,
	\begin{equation*}
		\|\mathbf{E}_{1\tau}(t)\|^{2} + \|\mathbf{H}_{1\tau}(t)\|^{2} + \|\mathbf{u}_{2\tau}(t)\|_{1}^{2} + \|p_{2\tau}(t)\|^{2} + \int_{0}^{t}{\|\nabla p_{2\tau}(\theta)\|^{2}}{\, \mathrm{d}\theta} \leq C.
	\end{equation*}
\end{lemma}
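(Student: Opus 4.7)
The plan is to derive interval-by-interval recursions for the splitting solution at the grid points $t_n$, then pass to a discrete Gronwall argument, and finally lift the discrete-time bounds to continuous-time bounds on the whole interval $[0,T]$.

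First, I apply Lemma \ref{l3.4.1}(i) on each subinterval $I_n$ with $s=t_{n-1}$, using $\mathbf{E}_{1\tau}(t_{n-1})=\tilde{\mathbf{E}}^{n-1}$, $\mathbf{H}_{1\tau}(t_{n-1})=\tilde{\mathbf{H}}^{n-1}$, and the fact that $p_{1\tau}(\theta)\equiv\tilde p^{n-1}$ on $I_n$, so $\int_{t_{n-1}}^{t}\|\nabla p_{1\tau}\|^2\,\mathrm{d}\theta \leq \tau\|\nabla\tilde p^{n-1}\|^2$. Similarly, I apply Lemma \ref{l3.4.1}(ii) on $I_n$, using that $\mathbf{E}_{2\tau}(\theta)\equiv\tilde{\mathbf{E}}^n=\mathbf{E}_{1\tau}(t_n)$ is constant in time, so $\int_{t_{n-1}}^{t}\|\mathbf{E}_{2\tau}\|^2\,\mathrm{d}\theta\leq \tau\|\tilde{\mathbf{E}}^n\|^2$. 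Setting $t=t_n$ in both estimates and invoking \eqref{e3.2.7} to replace $((c_0+\mathcal{B})\tilde p^n,\tilde p^n)$ by a multiple of $\|\tilde p^n\|^2$, I obtain a coupled recursion of the schematic form
\begin{equation*}
\|\tilde{\mathbf{E}}^n\|^2+\|\tilde{\mathbf{H}}^n\|^2+\|\tilde p^n\|^2+\kappa\!\int_{I_n}\!\!\|\nabla p_{2\tau}\|^2\,\mathrm{d}\theta \leq (1+C\tau)\bigl(\|\tilde{\mathbf{E}}^{n-1}\|^2+\|\tilde{\mathbf{H}}^{n-1}\|^2+\|\tilde p^{n-1}\|^2\bigr)+C\tau\|\nabla\tilde p^{n-1}\|^2+C\tau\|\tilde{\mathbf{E}}^n\|^2 + \mathrm{data}_n,
\end{equation*}
where $\mathrm{data}_n$ collects $\int_{I_n}(\|\mathbf{j}\|^2+\|g\|^2)$ terms together with the $\sup_{I_n}\|\mathbf{E}_{1\tau}\|^2$ and $\sup_{I_n}\|p_{2\tau}\|^2$ contributions, each of which is itself dominated by the right-hand side up to standard Young-type manipulations.

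The main obstacle is the coupling term $\tau\|\nabla\tilde p^{n-1}\|^2$, which is not a priori controlled by the target quantities. To close the loop, I propagate $\|\nabla\tilde p^n\|^2$ and $\|\nabla\cdot\tilde{\mathbf{E}}^n\|^2$ along the grid by applying Lemma \ref{l3.4.1}(iv) and (iii) on each $I_n$. Lemma (iv) at $t=t_n$ yields $\|\nabla\tilde p^n\|^2 \leq C\|\nabla\tilde p^{n-1}\|^2 + C\tau\|\nabla\cdot\tilde{\mathbf{E}}^n\|^2+\mathrm{data}$, and lemma (iii) at $t=t_n$ yields $\|\nabla\cdot\tilde{\mathbf{E}}^n\|^2 \leq (1+C\tau)\|\nabla\cdot\tilde{\mathbf{E}}^{n-1}\|^2 + C\tau\|\Delta\tilde p^{n-1}\|^2+\mathrm{data}$. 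Under the high-regularity assumption (H2$'$), I repeat the same inductive procedure one more rung to also track $\|\Delta\tilde p^n\|^2$, which is controlled by the standard $H^2$-elliptic regularity for the parabolic sub-equation \eqref{e3.4.7}. This produces an auxiliary discrete Gronwall system whose solution is uniformly bounded in $n$ (equivalently in $\tau$), yielding in particular $\max_n\|\nabla\tilde p^n\|^2\leq C$.

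With $\max_n\|\nabla\tilde p^n\|^2$ bounded, the original recursion becomes closed: choosing $\tau$ small enough so that $C\tau<\epsilon/2$, I absorb the $\tau\|\tilde{\mathbf{E}}^n\|^2$ term into the left-hand side, sum from $n=1$ to any $k\leq N$, and apply the standard discrete Gronwall inequality to conclude
\begin{equation*}
\|\tilde{\mathbf{E}}^k\|^2+\|\tilde{\mathbf{H}}^k\|^2+\|\tilde p^k\|^2+\int_0^{t_k}\|\nabla p_{2\tau}\|^2\,\mathrm{d}\theta \leq C,
\end{equation*}
with $C$ independent of $\tau$. Finally, to pass from grid points to arbitrary $t\in I_n$, I re-apply Lemma \ref{l3.4.1}(i) and (ii) with the upper limit $t$ in place of $t_n$, invoking the now-established bounds on $\tilde{\mathbf{E}}^{n-1},\tilde{\mathbf{H}}^{n-1},\tilde p^{n-1}$ and $\max_n\|\nabla\tilde p^{n-1}\|$, to obtain the continuous-in-$t$ estimates on $\|\mathbf{E}_{1\tau}(t)\|$, $\|\mathbf{H}_{1\tau}(t)\|$, $\|p_{2\tau}(t)\|$. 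The bound on $\|\mathbf{u}_{2\tau}(t)\|_1$ then follows at once by testing the first equation of \eqref{e3.4.6} against $\mathbf{v}=\mathbf{u}_{2\tau}(t)$ and using the coercivity of $a(\cdot,\cdot)$, giving $\beta_1\|\mathbf{u}_{2\tau}(t)\|_1^2\leq C\|p_{2\tau}(t)\|\,\|\mathbf{u}_{2\tau}(t)\|_1$, hence $\|\mathbf{u}_{2\tau}(t)\|_1\leq C\|p_{2\tau}(t)\|\leq C$, completing the proof.
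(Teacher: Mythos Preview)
Your setup is right: apply Lemma~\ref{l3.4.1}(i) and (ii) on each $I_n$, telescope, and the only obstruction to a clean Gronwall argument is the term $\frac{L^2}{\sigma}\sum_{i}\tau\|\nabla p_{2\tau}(t_{i-1})\|^2$ on the right (and, symmetrically, $\frac{L^2}{\kappa}\sum_i\tau\|\mathbf{E}_{1\tau}(t_i)\|^2$). Where your argument goes off track is in how you propose to handle this term.

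Your plan is to control $\max_n\|\nabla\tilde p^{n}\|$ by bootstrapping through Lemma~\ref{l3.4.1}(iii)--(iv) and then ``one more rung'' for $\|\Delta\tilde p^{n}\|$. But the coupled system from (iii)--(iv) has exactly the same structure as the one from (i)--(ii): telescoping (iv) puts $\kappa\int_0^{t_n}\|\Delta p_{2\tau}\|^2$ on the left, while telescoping (iii) produces $\frac{L^2}{\sigma}\sum_i\tau\|\Delta\tilde p^{i-1}\|^2$ on the right, and these are again a continuous integral versus a discrete sum of pointwise values. Going up another rung just reproduces the same mismatch at the next level; the hierarchy does not terminate, and your appeal to ``standard $H^2$-elliptic regularity for \eqref{e3.4.7}'' only expresses $\Delta p_{2\tau}$ in terms of $\partial_t p_{2\tau}$ and $\nabla\cdot\tilde{\mathbf{E}}^n$, neither of which is yet bounded. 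So as written the auxiliary Gronwall system does not close.

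The paper avoids the bootstrap entirely. After telescoping (i)--(ii) to obtain \eqref{e3.4.11}, it uses the continuity in $t$ of $\mathbf{E}_{1\tau}$ and of $\nabla p_{2\tau}$ (available under (H2$'$)) to replace, for $\tau$ small, the Riemann-type sums $\sum_i\tau\|\mathbf{E}_{1\tau}(t_i)\|^2$ and $\sum_i\tau\|\nabla p_{2\tau}(t_{i-1})\|^2$ by the corresponding integrals $\int_0^{t}\|\mathbf{E}_{1\tau}\|^2$ and $\int_0^{t}\|\nabla p_{2\tau}\|^2$, up to an $O(1)$ remainder. The first integral is then harmless for Gronwall, and the second is absorbed into the left-hand side because (H1) gives $\kappa-\frac{L^2}{\sigma}>0$. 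A single application of (continuous) Gronwall finishes the $L^2$ bounds, and your final paragraph on $\|\mathbf{u}_{2\tau}(t)\|_1$ via coercivity of $a(\cdot,\cdot)$ is exactly what the paper does. The missing idea in your proof is this sum-to-integral step; once you have it, no higher-order estimates are needed for Lemma~\ref{l3.4.2}.
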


\begin{proof}
	For any $t>0$, there exists a unique integer $1\leq n\leq N$ such that $t\in I_{n}$. Applying $(i)$ of Lemma \ref{l3.4.1} in $[t_{n-1},t]$ and noticing $p_{1\tau}(\theta) \equiv \tilde{p}^{n-1} \equiv p_{2\tau}(t_{n-1})$ for any $\theta\in I_{n}$, we have
	\begin{equation}\label{e3.4.8}
		\begin{aligned}
			&\ \epsilon\|\mathbf{E}_{1\tau}(t)\|^{2}+\mu\|\mathbf{H}_{1\tau}(t)\|^{2}
			\leq \epsilon\|\mathbf{E}_{1\tau}(t_{n-1})\|^{2}+ \mu\|\mathbf{H}_{1\tau}(t_{n-1})\|^{2}
			\\
			&\ +\frac{L^{2}}{\sigma}\tau\|\nabla p_{2\tau}(t_{n-1})\|^{2}
			+\sigma\int_{t_{n-1}}^{t}{\|\mathbf{E}_{1\tau}(\theta)\|^{2}}{\, \mathrm{d}\theta}+\frac{1}{2\sigma}\int_{t_{n-1}}^{t}{\|\mathbf{j}(\theta)\|^{2}}{\, \mathrm{d}\theta}.
		\end{aligned}
	\end{equation}
	Similarly, using $(ii)$ of Lemma \ref{l3.4.1} in $[t_{n-1},t]$ and noting $\mathbf{E}_{2\tau}(\theta) \equiv \mathbf{E}_{1\tau}(t_{n})$ for any $\theta\in I_{n}$, we get
	\begin{equation}\label{e3.4.9}
		\begin{aligned}
			&\ \big((c_{0} + \mathcal{B})p_{2\tau}(t),p_{2\tau}(t)\big) + \kappa\int_{t_{n-1}}^{t}{\|\nabla p_{2\tau}(\theta)\|^{2}}{\, \mathrm{d}\theta}
			\\
			\leq&\ \big((c_{0} + \mathcal{B})p_{2\tau}(t_{n-1}),p_{2\tau}(t_{n-1})\big) + \frac{L^{2}}{\kappa}\tau\|\mathbf{E}_{1\tau}(t_{n})\|^{2}
			\\
			&\ + \int_{t_{n-1}}^{t}{\|g(\theta)\|^{2}}{\, \mathrm{d}\theta} + \int_{t_{n-1}}^{t}{\|p_{2\tau}(\theta)\|^{2}}{\, \mathrm{d}\theta}.
		\end{aligned}
	\end{equation}
	For $1\leq i\leq n-1$, utilizing $(i)$ and $(ii)$ of Lemma \ref{l3.4.1} in $I_{i}$ and observing $p_{1\tau}(\theta) \equiv \tilde{p}^{i-1} \equiv p_{2\tau}(t_{i-1})$ and  $\mathbf{E}_{2\tau}(\theta) \equiv \mathbf{E}_{1\tau}(t_{i})$ for any $\theta\in I_{i}$, we have
	\begin{equation}\label{e3.4.10}
		\begin{aligned}
			&\ \epsilon\|\mathbf{E}_{1\tau}(t_{i})\|^{2}+\mu\|\mathbf{H}_{1\tau}(t_{i})\|^{2}
			\leq \epsilon\|\mathbf{E}_{1\tau}(t_{i-1})\|^{2}+ \mu\|\mathbf{H}_{1\tau}(t_{i-1})\|^{2}
			\\
			&\ \quad +\frac{L^{2}}{\sigma}\tau\|\nabla p_{2\tau}(t_{i-1})\|^{2}
			+\sigma\int_{t_{i-1}}^{t_{i}}{\|\mathbf{E}_{1\tau}(\theta)\|^{2}}{\, \mathrm{d}\theta}+\frac{1}{2\sigma}\int_{t_{i-1}}^{t_{i}}{\|\mathbf{j}(\theta)\|^{2}}{\, \mathrm{d}\theta},
			\\
			&\ \big((c_{0} + \mathcal{B})p_{2\tau}(t_{i}),p_{2\tau}(t_{i})\big) + \kappa\int_{t_{i-1}}^{t_{i}}{\|\nabla p_{2\tau}(\theta)\|^{2}}{\, \mathrm{d}\theta}
			\\
			\leq&\ \big((c_{0} + \mathcal{B})p_{2\tau}(t_{i-1}),p_{2\tau}(t_{i-1})\big) + \frac{L^{2}}{\kappa}\tau\|\mathbf{E}_{1\tau}(t_{i})\|^{2}
			\\
			&\ +\int_{t_{i-1}}^{t_{i}}{\|g(\theta)\|^{2}}{\, \mathrm{d}\theta}+\int_{t_{i-1}}^{t_{i}}{\|p_{2\tau}(\theta)\|^{2}}{\, \mathrm{d}\theta}.
		\end{aligned}
	\end{equation}
	Summing up \eqref{e3.4.10} from $i=n-1$ to $1$ and additionally adding \eqref{e3.4.8} and \eqref{e3.4.9}, we obtain
	\begin{equation}\label{e3.4.11}
		\begin{aligned}
			&\ \epsilon\|\mathbf{E}_{1\tau}(t)\|^{2} \!\!+\! \mu\|\mathbf{H}_{1\tau}(t)\|^{2}
			\!\!+\!\! \big((c_{0}\!+\!\mathcal{B})p_{2\tau}(t),p_{2\tau}(t)\big)
			\!\!+\! \kappa \! \int_{0}^{t}{\!\!\|\nabla p_{2\tau}(\theta)\|^{2}}{\, \mathrm{d}\theta}
			\\
			\leq&\ \epsilon\|\mathbf{E}_{0}\|^{2}+\mu\|\mathbf{H}_{0}\|^{2}+C\|p_{0}\|^{2}+\frac{1}{2\sigma}\int_{0}^{t}{\|\mathbf{j}(\theta)\|^{2}}{\, \mathrm{d}\theta}+\int_{0}^{t}{\|g(\theta)\|^{2}}{\, \mathrm{d}\theta}
			\\
			&\ +\frac{L^{2}}{\kappa}\sum_{i=1}^{n}\tau\|\mathbf{E}_{1\tau}(t_{i})\|^{2}
			+\frac{L^{2}}{\sigma}\sum_{i=1}^{n}\tau\|\nabla p_{2\tau}(t_{i-1})\|^{2}
			\\
			&\ +\sigma\int_{0}^{t}{\|\mathbf{E}_{1\tau}(\theta)\|^{2}}{\, \mathrm{d}\theta}+\int_{0}^{t}{\|p_{2\tau}(\theta)\|^{2}}{\, \mathrm{d}\theta}.
		\end{aligned}
	\end{equation}
	
	Due to the continuity of $\mathbf{E}_{1\tau}(\theta)$ in $\theta$, for sufficiently small $\tau$,
	\begin{align*}
		&\ \Big|\sum_{i=1}^{n}\tau\|\mathbf{E}_{1\tau}(t_{i})\|^{2}
		-\int_{0}^{t}{\|\mathbf{E}_{1\tau}(\theta)\|^{2}}{\, \mathrm{d}\theta}\Big|
		% \\
		% \leq&\ \Big|\sum_{i=1}^{n}\tau\|\mathbf{E}_{1\tau}(t_{i})\|^{2}
		%-\int_{0}^{t_{n}}{\|\mathbf{E}_{1\tau}(\theta)\|^{2}}{\, \mathrm{d}\theta}\Big|
		%+ \Big|\int_{t_{n}}^{t}{\|\mathbf{E}_{1\tau}(\theta)\|^{2}}{\, \mathrm{d}\theta}\Big|
		\leq
		\frac{1}{2}.
	\end{align*}
	According to (H2$^{\prime}$) and \cite[Theorem 6, p.386]{Ev1998}, we deduce that $\nabla p_{2\tau}(\theta)$ is continuous in $\theta$. In a similar way, we get
	\begin{equation*}
		\Big|\sum_{i=1}^{n}\tau\|\nabla p_{2\tau}(t_{i-1})\|^{2}-\int_{0}^{t}{\|\nabla p_{2\tau}(\theta)\|^{2}}{\, \mathrm{d}\theta}\Big| \leq \frac{1}{2}.
	\end{equation*}
	Substitute the above two inequalities into \eqref{e3.4.11}, we obtain
	\begin{align*}
		&\ \epsilon\|\mathbf{E}_{1\tau}(t)\|^{2}+\mu\|\mathbf{H}_{1\tau}(t)\|^{2}
		+c_{0}\|p_{2\tau}(t)\|^{2}+(\kappa-\frac{L^{2}}{\sigma}) \int_{0}^{t}{\|\nabla p_{2\tau}(\theta)\|^{2}}{\, \mathrm{d}\theta}
		\\
		\leq&\ \epsilon\|\mathbf{E}_{0}\|^{2}+\mu\|\mathbf{H}_{0}\|^{2}+C\|p_{0}\|^{2}+\frac{1}{2\sigma}\int_{0}^{t}{\|\mathbf{j}(\theta)\|^{2}}{\, \mathrm{d}\theta}+\int_{0}^{t}{\|g(\theta)\|^{2}}{\, \mathrm{d}\theta}
		\\
		&\ +\frac{L^{2}}{2\kappa} +\frac{L^{2}}{2\sigma} +(\sigma+\frac{L^{2}}{\kappa})\int_{0}^{t}{\|\mathbf{E}_{1\tau}(\theta)\|^{2}}{\, \mathrm{d}\theta}
		+\int_{0}^{t}{\|p_{2\tau}(\theta)\|^{2}}{\, \mathrm{d}\theta}.
	\end{align*}
	Assumption (H1) implies $\kappa-\frac{L^{2}}{\sigma}>0$, then according to Gronwall's lemma, there exists a constant $C>0$ independent of $\tau$ such that
	\begin{equation*}
		\|\mathbf{E}_{1\tau}(t)\|^{2}+\|\mathbf{H}_{1\tau}(t)\|^{2}+\|p_{2\tau}(t)\|^{2} \leq C.
	\end{equation*}
	
	From \eqref{e3.4.6} and the coerciveness of $a(\cdot,\cdot)$, it follows that
	\begin{align*}
		\beta_{1} \|\mathbf{u}_{2\tau}(t)\|_{1}^{2} \leq\! |a(\mathbf{u}_{2\tau}(t),\mathbf{u}_{2\tau}(t))| \!=\! |(p_{2\tau}(t), \alpha \nabla \!\cdot\! \mathbf{u}_{2\tau}(t))| \leq\! C\|p_{2\tau}(t)\|\|\mathbf{u}_{2\tau}(t)\|_{1},
	\end{align*}
	which implies $\|\mathbf{u}_{2\tau}(t)\|_{1} \leq C\|p_{2\tau}(t)\| \leq C$. Then, the rest of proof follows immediately.
\end{proof}

Applying the same argument as in proving Lemma \ref{l3.4.2}, we can estimate high order derivatives of solutions to \eqref{e3.4.5} and \eqref{e3.4.6}. Here, we present the estimates in next lemma and omit the proof.

\begin{lemma}\label{l3.4.3}
	Assume (H1) and (H2$^{\prime}$). Let $(\mathbf{E}_{1\tau},\mathbf{H}_{1\tau})$ and $(\mathbf{u}_{2\tau},p_{2\tau})$ be solutions to \eqref{e3.4.5} and \eqref{e3.4.6}, respectively. Then there exists a constant $C>0$ independent of $\tau$ such that for all $t\in [0,T]$,
	\begin{align*}
		\|\nabla\cdot\mathbf{E}_{1\tau}(t)\|^{2} + \|\nabla\cdot\mathbf{H}_{1\tau}(t)\|^{2} + \|\nabla p_{2\tau}(t)\|^{2} \leq&\ C,
		\\
		\int_{0}^{t}{\|\frac{\partial}{\partial t} \nabla\cdot \mathbf{u}_{2\tau}(\theta)\|^{2}}{\, \mathrm{d}\theta} 
		+ \int_{0}^{t}{\!\|\frac{\partial}{\partial t}p_{2\tau}(\theta)\|^{2}}{\, \mathrm{d}\theta}
		+ \int_{0}^{t}{\!\|\Delta p_{2\tau}(\theta)\|^{2}}{\, \mathrm{d}\theta} \leq&\ C.
	\end{align*}
\end{lemma}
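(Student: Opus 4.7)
The plan is to mimic the architecture of the proof of Lemma~\ref{l3.4.2}, but starting from parts $(iii)$ and $(iv)$ of Lemma~\ref{l3.4.1} instead of $(i)$ and $(ii)$, and to exploit the fact that on each subinterval $I_n$ the splitting construction freezes either the pressure or the electric field. Before invoking the machinery, I would first dispense with $\nabla\cdot\mathbf{H}_{1\tau}(t)$: taking divergence of the second equation of \eqref{e3.4.5} yields $\mu\frac{\partial}{\partial t}\nabla\cdot\mathbf{H}_{1\tau}(t)=0$ on each $I_n$, so by continuity $\|\nabla\cdot\mathbf{H}_{1\tau}(t)\|=\|\nabla\cdot\mathbf{H}_0\|$ for all $t\in[0,T]$.

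The main step is to apply $(iii)$ and $(iv)$ of Lemma~\ref{l3.4.1} on each subinterval. On every $I_i$, using $p_{1\tau}(\theta)\equiv\tilde{p}^{i-1}=p_{2\tau}(t_{i-1})$ and $\mathbf{E}_{2\tau}(\theta)\equiv\mathbf{E}_{1\tau}(t_i)$, part $(iii)$ gives an estimate of $\epsilon\|\nabla\cdot\mathbf{E}_{1\tau}(t_i)\|^2$ in which the integral $\frac{L^2}{\sigma}\int_{t_{i-1}}^{t_i}\|\Delta p_{1\tau}\|^2\,\mathrm{d}\theta$ collapses to $\frac{L^2}{\sigma}\tau\|\Delta p_{2\tau}(t_{i-1})\|^2$, while part $(iv)$ gives an estimate of $\big((c_0+\mathcal{B})\nabla p_{2\tau}(t_i),\nabla p_{2\tau}(t_i)\big)$ together with the two integrals of $\|\frac{\partial}{\partial t}p_{2\tau}\|^2$ and $\|\Delta p_{2\tau}\|^2$ over $I_i$, in which $\frac{2L^2}{\kappa}\int_{t_{i-1}}^{t_i}\|\nabla\cdot\mathbf{E}_2\|^2\,\mathrm{d}\theta$ becomes $\frac{2L^2}{\kappa}\tau\|\nabla\cdot\mathbf{E}_{1\tau}(t_i)\|^2$. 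Summing these two families of inequalities for $i=1,\dots,n-1$, adding the analogous estimates on $[t_{n-1},t]$, and then invoking \eqref{e3.2.7} to pass from $\big((c_0+\mathcal{B})\nabla p_{2\tau},\nabla p_{2\tau}\big)$ to $c_0\|\nabla p_{2\tau}\|^2$, produces a combined inequality for $\|\nabla\cdot\mathbf{E}_{1\tau}(t)\|^2+\|\nabla p_{2\tau}(t)\|^2$ plus the two desired integral terms on the left.

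The Riemann sums $\sum_i\tau\|\nabla\cdot\mathbf{E}_{1\tau}(t_i)\|^2$ and $\sum_i\tau\|\Delta p_{2\tau}(t_{i-1})\|^2$ appearing on the right must then be controlled; invoking continuity in $\theta$ of $\nabla\cdot\mathbf{E}_{1\tau}$ (from \eqref{e3.4.5}) and of $\Delta p_{2\tau}$ (via assumption (H2$'$) and classical elliptic regularity, as in \cite{Ev1998}), one replaces each sum by the corresponding time integral up to an $\mathcal{O}(1)$ error, exactly as in the end of the proof of Lemma~\ref{l3.4.2}. The crucial algebraic point is that (H1) yields $\kappa-\frac{L^2}{\sigma}>0$, so the $\int\|\Delta p_{2\tau}\|^2$ term on the right can be absorbed into its counterpart on the left. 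Gronwall's lemma then delivers the uniform-in-$\tau$ bound on $\|\nabla\cdot\mathbf{E}_{1\tau}(t)\|^2+\|\nabla p_{2\tau}(t)\|^2$, and the integral bounds on $\|\frac{\partial}{\partial t}p_{2\tau}\|^2$ and $\|\Delta p_{2\tau}\|^2$ follow as a byproduct.

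It remains to handle $\|\frac{\partial}{\partial t}\nabla\cdot\mathbf{u}_{2\tau}\|$. From the first equation of \eqref{e3.4.6} we have $\mathcal{B}p_{2\tau}(t)=\alpha\nabla\cdot\mathbf{u}_{2\tau}(t)$, so differentiating in $t$ and using $\mathcal{B}\frac{\partial}{\partial t}=\frac{\partial}{\partial t}\mathcal{B}$ together with the boundedness of $\mathcal{B}$ yields $\alpha\|\frac{\partial}{\partial t}\nabla\cdot\mathbf{u}_{2\tau}(t)\|\leq C\|\frac{\partial}{\partial t}p_{2\tau}(t)\|$, whose square integrates to the claimed bound. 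I expect the main technical obstacle to be the careful bookkeeping that reduces the telescoped sums of the Lemma~\ref{l3.4.1} estimates to a single Gronwall inequality without losing uniformity in $\tau$; in particular, ensuring that the discretization remainder between the Riemann sums and the continuous integrals does not swamp the coercivity gained from (H1) requires the continuity arguments to be invoked in exactly the same spirit as in \eqref{e3.4.11} and the lines following it.
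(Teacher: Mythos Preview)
Your proposal is correct and follows essentially the same approach the paper indicates: the paper omits the proof of this lemma, stating only that it is obtained by ``applying the same argument as in proving Lemma~\ref{l3.4.2},'' which is precisely what you do---use parts $(iii)$ and $(iv)$ of Lemma~\ref{l3.4.1} in place of $(i)$ and $(ii)$, telescope over subintervals, replace Riemann sums by integrals via continuity, absorb via (H1), and apply Gronwall, mirroring also the structure of Lemma~\ref{l3.3.6}. One minor remark: the lower bound $((c_0+\mathcal{B})\nabla p_{2\tau},\nabla p_{2\tau})\ge c_0\|\nabla p_{2\tau}\|^2$ comes from the monotonicity of $\mathcal{B}$ rather than from \eqref{e3.2.7}, but this is how the paper handles the analogous step in Lemma~\ref{l3.4.2} as well.
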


\begin{remark}\label{r3.4.1}
	Take $t=t_{n}$ in above two lemmas, we get uniform estimates for the splitting approximate solution, i.e., for all $0\leq n\leq N$, there hold
	\begin{align*}
		\|\tilde{\mathbf{E}}^{n}\|^{2} + \|\tilde{\mathbf{H}}^{n}\|^{2} + \|\tilde{\mathbf{u}}^{n}\|_{1}^{2} + \|\tilde{p}^{n}\|^{2} \leq&\ C,
		\\
		\|\nabla\cdot\tilde{\mathbf{E}}^{n}\|^{2} + \|\nabla\cdot\tilde{\mathbf{H}}^{n}\|^{2} + \|\nabla \tilde{p}^{n}\|^{2} \leq&\ C.
	\end{align*}
\end{remark}

The next lemma provides more estimates for solutions to \eqref{e3.4.5} and \eqref{e3.4.6}.

\begin{lemma}\label{l3.4.4}
	Assume (H1) and (H2$^{\prime}$). Let $(\mathbf{E}_{1\tau},\mathbf{H}_{1\tau})$ and $(\mathbf{u}_{2\tau},p_{2\tau})$ be solutions to \eqref{e3.4.5} and \eqref{e3.4.6}, respectively. Then there exists a constant $C>0$ independent of $\tau$ such that for all $t\in [0,T]$,
	\begin{align*}
		\|\frac{\partial}{\partial t}\mathbf{E}_{1\tau}(t)\|^{2} + \|\frac{\partial}{\partial t}\mathbf{H}_{1\tau}(t)\|^{2} + \|\frac{\partial}{\partial t} \nabla\cdot \mathbf{u}_{2\tau}(t)\|^{2} + \|\frac{\partial}{\partial t}p_{2\tau}(t)\|^{2} \leq&\ C,
		\\
		\|\nabla\times \mathbf{E}_{1\tau}(t)\|^{2} + \|\nabla\times \mathbf{H}_{1\tau}(t)\|^{2} + \|\Delta p_{2\tau}(t)\|^{2} + \int_{0}^{t}{\|\nabla \frac{\partial}{\partial t}p_{2\tau}(\theta)\|^{2}}{\, \mathrm{d}\theta} \leq&\ C.
	\end{align*}
\end{lemma}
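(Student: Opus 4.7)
The plan is to mimic the energy-argument used in Lemma \ref{l3.4.2} and \ref{l3.4.3}, but applied after differentiation in time, and then to recover the curl/Laplacian estimates directly from the differential equations \eqref{e3.4.5}--\eqref{e3.4.7}. Throughout, I will exploit the piecewise structure of the splitting: on each $I_{n}$, the term $L\nabla\tilde p^{n-1}$ in \eqref{e3.4.5} is independent of $t$, and so is $L\mathbf{E}_{1\tau}(t_{n})$ in \eqref{e3.4.7}, which is what makes the time-differentiated equations tractable.

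First I would differentiate \eqref{e3.4.5} with respect to $t$ on each interval $I_{n}$, drop the constant-in-$t$ right-hand side $L\nabla\tilde p^{n-1}$, and test with $(\mathbf{D},\mathbf{B})=(\tfrac{\partial}{\partial t}\mathbf{E}_{1\tau}(t),\tfrac{\partial}{\partial t}\mathbf{H}_{1\tau}(t))$. This yields
\[
\tfrac{1}{2}\tfrac{\mathrm d}{\mathrm dt}\bigl(\epsilon\|\tfrac{\partial}{\partial t}\mathbf{E}_{1\tau}\|^{2}+\mu\|\tfrac{\partial}{\partial t}\mathbf{H}_{1\tau}\|^{2}\bigr)+\sigma\|\tfrac{\partial}{\partial t}\mathbf{E}_{1\tau}\|^{2}=(\mathbf{j}'(t),\tfrac{\partial}{\partial t}\mathbf{E}_{1\tau}),
\]
from which, by Young's inequality and integration on $[t_{n-1},t]\subset I_{n}$, I obtain an inequality analogous to \eqref{e3.4.8} but for the time derivatives, with the starting value $\|\tfrac{\partial}{\partial t}\mathbf{E}_{1\tau}(t_{n-1}^{+})\|^{2}+\|\tfrac{\partial}{\partial t}\mathbf{H}_{1\tau}(t_{n-1}^{+})\|^{2}$ on the right. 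The same procedure applied to \eqref{e3.4.7}, where $L\mathbf{E}_{1\tau}(t_{n})$ is a constant source on $I_{n}$, gives
\[
\tfrac{1}{2}\tfrac{\mathrm d}{\mathrm dt}\bigl((c_{0}+\mathcal{B})\tfrac{\partial}{\partial t}p_{2\tau},\tfrac{\partial}{\partial t}p_{2\tau}\bigr)+\kappa\|\nabla\tfrac{\partial}{\partial t}p_{2\tau}\|^{2}=(g'(t),\tfrac{\partial}{\partial t}p_{2\tau}),
\]
which after integration produces both the pointwise bound for $\|\tfrac{\partial}{\partial t}p_{2\tau}(t)\|$ (via \eqref{e3.2.7}) and the integral bound $\int_{0}^{t}\|\nabla\tfrac{\partial}{\partial t}p_{2\tau}(\theta)\|^{2}\mathrm d\theta\leq C$.

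Next, to close the induction in $n$ I need to control the jump in the time derivatives at each $t_{n-1}$. I would evaluate \eqref{e3.4.5} at $t_{n-1}^{+}$ and test with $\tfrac{\partial}{\partial t}\mathbf{E}_{1\tau}(t_{n-1}^{+})$ and $\tfrac{\partial}{\partial t}\mathbf{H}_{1\tau}(t_{n-1}^{+})$, producing
\[
\epsilon\|\tfrac{\partial}{\partial t}\mathbf{E}_{1\tau}(t_{n-1}^{+})\|\leq C\bigl(\|\mathbf{j}(t_{n-1})\|+\|\tilde{\mathbf{E}}^{n-1}\|+\|\nabla\times\tilde{\mathbf{H}}^{n-1}\|+\|\nabla\tilde p^{n-1}\|\bigr),
\]
and similarly $\mu\|\tfrac{\partial}{\partial t}\mathbf{H}_{1\tau}(t_{n-1}^{+})\|\leq\|\nabla\times\tilde{\mathbf{E}}^{n-1}\|$. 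Likewise for \eqref{e3.4.7}, using integration by parts to write $(\kappa\nabla\tilde p^{n-1},\nabla q)=-(\kappa\Delta\tilde p^{n-1},q)$, I get $c_{0}\|\tfrac{\partial}{\partial t}p_{2\tau}(t_{n-1}^{+})\|\leq C(\|g(t_{n-1})\|+\|\Delta\tilde p^{n-1}\|+\|\nabla\cdot\tilde{\mathbf{E}}^{n-1}\|)$. Then summing the integrated inequalities over $i=1,\ldots,n-1$ and adding the interval $[t_{n-1},t]$ contribution exactly as in \eqref{e3.4.10}--\eqref{e3.4.11}, applying Remark \ref{r3.4.1} to absorb the discrete terms $\|\tilde{\mathbf{E}}^{i}\|$, $\|\nabla\tilde p^{i-1}\|$, $\|\nabla\cdot\tilde{\mathbf{E}}^{i}\|$ (all uniformly bounded), and invoking Gronwall's lemma, produces the first block of bounds in the lemma for $\|\tfrac{\partial}{\partial t}\mathbf{E}_{1\tau}(t)\|$, $\|\tfrac{\partial}{\partial t}\mathbf{H}_{1\tau}(t)\|$ and $\|\tfrac{\partial}{\partial t}p_{2\tau}(t)\|$. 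The estimate for $\|\tfrac{\partial}{\partial t}\nabla\cdot\mathbf{u}_{2\tau}(t)\|$ then follows from the identity $\alpha\tfrac{\partial}{\partial t}\nabla\cdot\mathbf{u}_{2\tau}=\mathcal{B}\tfrac{\partial}{\partial t}p_{2\tau}$ and the boundedness of $\mathcal{B}$.

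Finally, for the curl/Laplacian bounds, I would read them off the equations rather than prove new energy estimates. From \eqref{e3.4.5}, rearrange to get $\nabla\times\mathbf{H}_{1\tau}=\epsilon\tfrac{\partial}{\partial t}\mathbf{E}_{1\tau}+\sigma\mathbf{E}_{1\tau}-L\nabla\tilde p^{n-1}-\mathbf{j}(t)$ in $\bm{L}^{2}(\Omega)$, and $\mu\tfrac{\partial}{\partial t}\mathbf{H}_{1\tau}+\nabla\times\mathbf{E}_{1\tau}=0$, so $\|\nabla\times\mathbf{E}_{1\tau}(t)\|=\mu\|\tfrac{\partial}{\partial t}\mathbf{H}_{1\tau}(t)\|\leq C$ and $\|\nabla\times\mathbf{H}_{1\tau}(t)\|\leq C$ using the already-established pointwise bounds together with Lemma \ref{l3.4.2}, Lemma \ref{l3.4.3} and Remark \ref{r3.4.1}. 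Similarly, \eqref{e3.4.7} can be rewritten as $-\kappa\Delta p_{2\tau}=g(t)+L\nabla\cdot\mathbf{E}_{1\tau}(t_{n})-\tfrac{\partial}{\partial t}(c_{0}+\mathcal{B})p_{2\tau}$, and taking norms together with $\|(c_{0}+\mathcal{B})\tfrac{\partial}{\partial t}p_{2\tau}\|\leq C\|\tfrac{\partial}{\partial t}p_{2\tau}\|$ yields $\|\Delta p_{2\tau}(t)\|\leq C$. The main obstacle I expect is the bookkeeping in the summation step: one must verify that the discrete jumps in $\tfrac{\partial}{\partial t}\mathbf{E}_{1\tau}$ and $\tfrac{\partial}{\partial t}p_{2\tau}$ at each $t_{n-1}$ are controlled uniformly in $n$ and $\tau$ by the already established bounds of Remark \ref{r3.4.1}, and this needs (H2$^{\prime}$) together with the continuity argument (as in the passage surrounding \eqref{e3.4.11}) to pass from $\tau$-sums to integrals.
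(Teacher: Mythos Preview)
Your overall plan --- differentiate \eqref{e3.4.5} and \eqref{e3.4.7} in time on each $I_{n}$, test with the time derivatives, integrate, sum, apply Gronwall, and finally read off the curl/Laplacian bounds from the equations --- is the paper's strategy, and your final paragraph in particular is exactly what the paper does. The gap is in how you treat the interface values. You propose to ``control the jump'' by bounding $\|\tfrac{\partial}{\partial t}\mathbf{E}_{1\tau}(t_{n-1}^{+})\|$, $\|\tfrac{\partial}{\partial t}\mathbf{H}_{1\tau}(t_{n-1}^{+})\|$ and $\|\tfrac{\partial}{\partial t}p_{2\tau}(t_{n-1}^{+})\|$ at \emph{every} node through the undifferentiated equations; but those bounds require $\|\nabla\times\tilde{\mathbf{H}}^{n-1}\|$, $\|\nabla\times\tilde{\mathbf{E}}^{n-1}\|$ and $\|\Delta\tilde{p}^{n-1}\|$, none of which are furnished by Remark~\ref{r3.4.1} --- they are precisely the second-line conclusions of the present lemma, so the argument is circular. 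Even granting those bounds, replacing the interface value on each subinterval by a fixed constant and summing over $n\sim 1/\tau$ intervals would produce an extra factor $n$ and would not close. (The list you give of ``discrete terms $\|\tilde{\mathbf{E}}^{i}\|$, $\|\nabla\tilde p^{i-1}\|$, $\|\nabla\cdot\tilde{\mathbf{E}}^{i}\|$'' does not match the quantities that actually appear in your interface bounds.)

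The paper avoids this by telescoping the per-interval inequalities \eqref{e3.4.13}--\eqref{e3.4.14} directly, so that only the values at $t=0$ survive on the right of \eqref{e3.4.15}; it then bounds $\|\tfrac{\partial}{\partial t}\mathbf{E}_{1\tau}(0)\|$, $\|\tfrac{\partial}{\partial t}\mathbf{H}_{1\tau}(0)\|$, $\|\tfrac{\partial}{\partial t}p_{2\tau}(0)\|$ by exactly your manoeuvre (evaluate the undifferentiated equations), but now at $t=0$, where the smooth initial data from (H2$'$) supply $\|\nabla\times\mathbf{H}_{0}\|$, $\|\nabla\times\mathbf{E}_{0}\|$, $\|\Delta p_{0}\|$ directly and Remark~\ref{r3.4.1} supplies $\|\nabla\cdot\tilde{\mathbf{E}}^{1}\|$. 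In short: do not restart the bound at each $t_{i}$; pass through the nodes by telescoping and invoke the undifferentiated equation only once, at $t=0$.
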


\begin{proof}
	For any $1\leq n\leq N$ and $t\in I_{n}$, taking derivative on both sides of \eqref{e3.4.5} and \eqref{e3.4.7} with respect to $t$,  we obtain that $(\frac{\partial}{\partial t}\mathbf{E}_{1\tau}(t),\frac{\partial}{\partial t}\mathbf{H}_{1\tau}(t),\frac{\partial}{\partial t}p_{2\tau}(t))$ $(t\in I_{n})$ satisfies for all $(\mathbf{D},\mathbf{B},q)\in \mathbb{S}$,
	\begin{align*}
		& \big(\epsilon\frac{\partial^{2}}{\partial t^{2}}\mathbf{E}_{1\tau}(t),\mathbf{D}\big) + \big(\sigma \frac{\partial}{\partial t}\mathbf{E}_{1\tau}(t),\mathbf{D}\big) - \big(\nabla\times \frac{\partial}{\partial t}\mathbf{H}_{1\tau}(t),\mathbf{D}\big) = \big(\mathbf{j}^{\prime}(t),\mathbf{D}\big),
		\\
		& \big(\mu\frac{\partial^{2}}{\partial t^{2}}\mathbf{H}_{1\tau}(t),\mathbf{B}\big) + \big(\nabla\times \frac{\partial}{\partial t}\mathbf{E}_{1\tau}(t),\mathbf{B}\big) = 0,
		\\
		& \big(\frac{\partial^{2}}{\partial t^{2}}(c_{0}+\mathcal{B})p_{2\tau}(t),q\big) + \big(\kappa\nabla \frac{\partial}{\partial t}p_{2\tau}(t),\nabla q\big) = \big(g^{\prime}(t),q\big),
	\end{align*}
	with initial value $(\frac{\partial}{\partial t} \mathbf{E}_{1\tau}(t_{n-1}),\frac{\partial}{\partial t} \mathbf{H}_{1\tau}(t_{n-1}),\frac{\partial}{\partial t} p_{2\tau}(t_{n-1}))$ at $t_{n-1}$. Then, choosing $(\mathbf{D},\mathbf{B},q) = (\frac{\partial}{\partial t}\mathbf{E}_{1\tau}(t),\frac{\partial}{\partial t}\mathbf{H}_{1\tau}(t),\frac{\partial}{\partial t}p_{2\tau}(t))$ in above equation leads to
	\begin{equation}\label{e3.4.12}
		\begin{aligned}
			&\ \frac{1}{2}\frac{\mathrm{d}}{\mathrm{d}t}\big(\epsilon\|\frac{\partial}{\partial t}\mathbf{E}_{1\tau}(t)\|^{2} + \mu\|\frac{\partial}{\partial t}\mathbf{H}_{1\tau}(t)\|^{2} + ((c_{0}+\mathcal{B})\frac{\partial}{\partial t}p_{2\tau}(t),\frac{\partial}{\partial t}p_{2\tau}(t))\big)
			\\
			&\ +\sigma\|\frac{\partial}{\partial t}\mathbf{E}_{1\tau}(t)\|^{2} + \kappa\|\nabla \frac{\partial}{\partial t}p_{2\tau}(t)\|^{2}
			\\
			\leq&\ \frac{1}{2}\|\mathbf{j}^{\prime}(t)\|^{2} + \frac{1}{2}\|g^{\prime}(t)\|^{2} + \frac{1}{2}\|\frac{\partial}{\partial t}\mathbf{E}_{1\tau}(t)\|^{2} + \frac{1}{2}\|\frac{\partial}{\partial t}p_{2\tau}(t)\|^{2}.
		\end{aligned}
	\end{equation}
	Integrating over $[t_{n-1},t]$ yields
	\begin{equation}\label{e3.4.13}
		\begin{aligned}
			&\ \epsilon\|\frac{\partial}{\partial t}\mathbf{E}_{1\tau}(t)\|^{2} + \mu\|\frac{\partial}{\partial t}\mathbf{H}_{1\tau}(t)\|^{2} + \big((c_{0}+\mathcal{B})\frac{\partial}{\partial t}p_{2\tau}(t),\frac{\partial}{\partial t}p_{2\tau}(t)\big)
			\\
			&\ + 2\kappa \int_{t_{n-1}}^{t}{\|\nabla \frac{\partial}{\partial t}p_{2\tau}(\theta)\|^{2}}{\, \mathrm{d}\theta}
			\\
			\leq&\ \epsilon\|\frac{\partial}{\partial t}\mathbf{E}_{1\tau}(t_{n-1})\|^{2} + \mu\|\frac{\partial}{\partial t}\mathbf{H}_{1\tau}(t_{n-1})\|^{2}
			\\
			&\ + \big((c_{0}+\mathcal{B})\frac{\partial}{\partial t}p_{2\tau}(t_{n-1}),\frac{\partial}{\partial t}p_{2\tau}(t_{n-1})\big) + \int_{t_{n-1}}^{t}{\|\mathbf{j}^{\prime}(\theta)\|^{2}}{\, \mathrm{d}\theta}
			\\
			&\ + \int_{t_{n-1}}^{t}{\|g^{\prime}(\theta)\|^{2}}{\, \mathrm{d}\theta} + \int_{t_{n-1}}^{t}{\|\frac{\partial}{\partial t}\mathbf{E}_{1\tau}(\theta)\|^{2}}{\, \mathrm{d}\theta} + \int_{t_{n-1}}^{t}{\|\frac{\partial}{\partial t}p_{2\tau}(\theta)\|^{2}}{\, \mathrm{d}\theta}.
		\end{aligned}
	\end{equation}
	For any $1\leq i\leq n-1$, integrating \eqref{e3.4.12} over $I_{i}$, we have
	\begin{equation}\label{e3.4.14}
		\begin{aligned}
			&\ \epsilon\|\frac{\partial}{\partial t}\mathbf{E}_{1\tau}(t_{i})\|^{2} + \mu\|\frac{\partial}{\partial t}\mathbf{H}_{1\tau}(t_{i})\|^{2} + \big((c_{0}+\mathcal{B})\frac{\partial}{\partial t}p_{2\tau}(t_{i}),\frac{\partial}{\partial t}p_{2\tau}(t_{i})\big)
			\\
			&\ + 2\kappa \int_{t_{i-1}}^{t_{i}}{\|\nabla \frac{\partial}{\partial t}p_{2\tau}(\theta)\|^{2}}{\, \mathrm{d}\theta}
			\\
			\leq&\ \epsilon\|\frac{\partial}{\partial t}\mathbf{E}_{1\tau}(t_{i-1})\|^{2} + \mu\|\frac{\partial}{\partial t}\mathbf{H}_{1\tau}(t_{i-1})\|^{2}
			\\
			&\ + \big((c_{0}+\mathcal{B})\frac{\partial}{\partial t}p_{2\tau}(t_{i-1}),\frac{\partial}{\partial t}p_{2\tau}(t_{i-1})\big) + \int_{t_{i-1}}^{t_{i}}{\|\mathbf{j}^{\prime}(\theta)\|^{2}}{\, \mathrm{d}\theta}
			\\
			&\ + \int_{t_{i-1}}^{t_{i}}{\|g^{\prime}(\theta)\|^{2}}{\, \mathrm{d}\theta} + \int_{t_{i-1}}^{t_{i}}{\|\frac{\partial}{\partial t}\mathbf{E}_{1\tau}(\theta)\|^{2}}{\, \mathrm{d}\theta} + \int_{t_{i-1}}^{t_{i}}{\|\frac{\partial}{\partial t}p_{2\tau}(\theta)\|^{2}}{\, \mathrm{d}\theta}.
		\end{aligned}
	\end{equation}
	Summing up \eqref{e3.4.14} from $i=n-1$ to $1$ and additionally adding \eqref{e3.4.13}, we get
	\begin{equation}\label{e3.4.15}
		\begin{aligned}
			&\, \epsilon\|\frac{\partial}{\partial t}\mathbf{E}_{1\tau}(t)\|^{2} \!\!+\! \mu\|\frac{\partial}{\partial t}\mathbf{H}_{1\tau}(t)\|^{2} \!\!+\! c_{0}\|\frac{\partial}{\partial t}p_{2\tau}(t)\|^{2} \!\!+\! 2\kappa \! \int_{0}^{t}{\!\|\nabla \frac{\partial}{\partial t}p_{2\tau}(\theta)\|^{2}}{\, \mathrm{d}\theta}
			\\
			\leq&\, \epsilon\|\frac{\partial}{\partial t}\mathbf{E}_{1\tau}(0)\|^{2} + \mu\|\frac{\partial}{\partial t}\mathbf{H}_{1\tau}(0)\|^{2} + C\|\frac{\partial}{\partial t}p_{2\tau}(0)\|^{2} + \int_{0}^{t}{\|\mathbf{j}^{\prime}(\theta)\|^{2}}{\, \mathrm{d}\theta}
			\\
			&\, + \int_{0}^{t}{\|g^{\prime}(\theta)\|^{2}}{\, \mathrm{d}\theta} + \int_{0}^{t}{\|\frac{\partial}{\partial t}\mathbf{E}_{1\tau}(\theta)\|^{2}}{\, \mathrm{d}\theta} + \int_{0}^{t}{\|\frac{\partial}{\partial t}p_{2\tau}(\theta)\|^{2}}{\, \mathrm{d}\theta}.
		\end{aligned}
	\end{equation}
	Taking $(\mathbf{D},\mathbf{B})=(\frac{\partial}{\partial t}\mathbf{E}_{1\tau}(t),\frac{\partial}{\partial t}\mathbf{H}_{1\tau}(t))$ in \eqref{e3.4.5} and $q=\frac{\partial}{\partial t}p_{2\tau}(t)$ in \eqref{e3.4.7}, letting $t=0$ and integrating by parts, we obtain
	\begin{align*}
		\epsilon \|\frac{\partial}{\partial t}\mathbf{E}_{1\tau}(0)\| \leq&\ \|\mathbf{j}(0)\| + \sigma \|\mathbf{E}_{0}\| + \|\nabla\times \mathbf{H}_{0}\| + L \|\nabla p_{0}\| \leq C,
		\\
		\mu \|\frac{\partial}{\partial t}\mathbf{H}_{1\tau}(0)\| \leq&\ \|\nabla\times \mathbf{E}_{0}\| \leq C,
		\\
		c_{0} \|\frac{\partial}{\partial t}p_{2\tau}(0)\| \leq&\ \|g(0)\|+\kappa\|\Delta p_{0}\|+L\|\nabla\cdot \tilde{\mathbf{E}}^{1}\| \leq C,
	\end{align*}
	where we have used Cauchy-Schwarz inequality and Remark \ref{r3.4.1}. Then, applying Gronwall's lemma to \eqref{e3.4.15}, we get there exists a constant $C>0$ independent of $\tau$ such that the first estimate in this lemma holds.
	Integrating by parts in \eqref{e3.4.7} gives
	\begin{equation*}
		\kappa\Delta p_{2\tau}(t) = (c_{0}+\mathcal{B})\frac{\partial}{\partial t}p_{2\tau}(t) + L\nabla\cdot\mathbf{E}_{1\tau}(t_{n}) - g(t) \text{ in } L^{2}(\Omega).
	\end{equation*}
	Combining with Lemma \ref{l3.4.3}, we have $\|\Delta p_{2\tau}(t)\|^{2} \leq C$. Then, it is easy to prove the rest estimates.
\end{proof}

We can repeat the procedure in the proof of the above lemma by taking derivative with respect to $t$ to  obtain higher order estimates for solutions to \eqref{e3.4.5} and \eqref{e3.4.6}. Here, we present the estimates in next lemma and omit thfe proof.

\begin{lemma}\label{l3.4.5}
	Assume (H1) and (H2$^{\prime}$). Let $(\mathbf{E}_{1\tau},\mathbf{H}_{1\tau})$ and $(\mathbf{u}_{2\tau},p_{2\tau})$ be solutions to \eqref{e3.4.5} and \eqref{e3.4.6}, respectively. Then there exists a constant $C>0$ independent of $\tau$ such that for all $t\in [0,T]$,
	\begin{align*}
		\|\mathbf{E}_{1\tau}(t)\|_{2}^{2}+\|\mathbf{H}_{1\tau}(t)\|_{2}^{2}+\|p_{2\tau}(t)\|_{4}^{2} & \leq C,
		\\
		\int_{0}^{t}{\!\|\frac{\partial}{\partial t}\mathbf{E}_{1\tau}(\theta)\|_{2}^{2}}{\, \mathrm{d}\theta}
		\!+\!\! \int_{0}^{t}{\!\|\frac{\partial}{\partial t}\mathbf{H}_{1\tau}(\theta)\|_{2}^{2}}{\, \mathrm{d}\theta}
		\!+\!\! \int_{0}^{t}{\!\|\frac{\partial}{\partial t}\mathbf{u}_{2\tau}(\theta)\|_{3}^{2}}{\, \mathrm{d}\theta} & 
            \\
		\!+\!\! \int_{0}^{t}{\!\|\frac{\partial}{\partial t}p_{2\tau}(\theta)\|_{2}^{2}}{\, \mathrm{d}\theta}
		& \leq C,
		\\
		\int_{0}^{t}{\|\nabla \!\times\! \frac{\partial}{\partial t}\mathbf{E}_{1\tau}(\theta)\|^{2}}{\, \mathrm{d}\theta}
		\!+\! \int_{0}^{t}{\|\nabla \!\times\! \frac{\partial}{\partial t}\mathbf{H}_{1\tau}(\theta)\|^{2}}{\, \mathrm{d}\theta}
		\!+\! \int_{0}^{t}{\|\frac{\partial^{2}}{\partial t^{2}}p_{2\tau}(\theta)\|^{2}}{\, \mathrm{d}\theta}
		& \leq C.
	\end{align*}
\end{lemma}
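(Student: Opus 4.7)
The plan is to iterate the argument of Lemma \ref{l3.4.4} by differentiating equations \eqref{e3.4.5} and \eqref{e3.4.7} a second time in $t$ on each interval $I_n$. A useful observation is that the coupling source terms $L\nabla\tilde{p}^{n-1}$ in \eqref{e3.4.5} and $L\mathbf{E}_{1\tau}(t_n)$ in \eqref{e3.4.7} are constant in $t$ on $I_n$, so they disappear after one further time differentiation. Testing the twice-differentiated equations with $\bigl(\tfrac{\partial^2}{\partial t^2}\mathbf{E}_{1\tau}(t),\tfrac{\partial^2}{\partial t^2}\mathbf{H}_{1\tau}(t),\tfrac{\partial^2}{\partial t^2}p_{2\tau}(t)\bigr)$, summing across intervals as in \eqref{e3.4.13}--\eqref{e3.4.15}, and invoking Gronwall's lemma produces the bound on $\int_0^t\|\tfrac{\partial^2}{\partial t^2}p_{2\tau}(\theta)\|^2\,\mathrm{d}\theta$. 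The initial-value contributions at $t=0$ are controlled by evaluating the once-differentiated equations at $t=0$ and using Cauchy--Schwarz together with Lemma \ref{l3.4.4} and the smoothness of $\mathbf{j},g,\mathbf{E}_0,\mathbf{H}_0,p_0$ from (H2$^{\prime}$).

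For the curl estimates of $\tfrac{\partial}{\partial t}\mathbf{E}_{1\tau}$ and $\tfrac{\partial}{\partial t}\mathbf{H}_{1\tau}$, I would differentiate \eqref{e3.4.5} in $t$ and rearrange to the strong form
\begin{equation*}
\nabla\times\tfrac{\partial}{\partial t}\mathbf{H}_{1\tau}(t) = \epsilon\tfrac{\partial^2}{\partial t^2}\mathbf{E}_{1\tau}(t) + \sigma\tfrac{\partial}{\partial t}\mathbf{E}_{1\tau}(t) - \mathbf{j}'(t), \qquad \nabla\times\tfrac{\partial}{\partial t}\mathbf{E}_{1\tau}(t) = -\mu\tfrac{\partial^2}{\partial t^2}\mathbf{H}_{1\tau}(t),
\end{equation*}
so the desired $L^2(0,t)$-curl bounds reduce to the second-derivative estimates just obtained. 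Likewise, applying $\tfrac{\partial}{\partial t}\nabla\cdot$ to \eqref{e3.4.6} gives a pointwise bound on $\tfrac{\partial}{\partial t}\mathbf{u}_{2\tau}$ in $H^3$ via the elliptic operator $a(\cdot,\cdot)$ and Lemma \ref{l3.2.2}.

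For the $H^2$ and $H^4$ spatial bounds, I would bootstrap using elliptic regularity on the convex domain $\Omega$. From the identity $\kappa\Delta p_{2\tau}(t)=(c_0+\mathcal{B})\tfrac{\partial}{\partial t}p_{2\tau}(t) + L\nabla\cdot\mathbf{E}_{1\tau}(t_n)-g(t)$ established in the proof of Lemma \ref{l3.4.4}, together with the commutation property of Lemma \ref{l3.2.2} and $H^2$-regularity of the Dirichlet Laplacian, I iterate twice to raise $p_{2\tau}$ from $H^2$ to $H^4$; each iteration feeds back a requirement on the spatial regularity of $\mathbf{E}_{1\tau}(t_n)$ and $g$, which is supplied by (H2$^{\prime}$) and by applying the Maxwell regularity estimate $\|\mathbf{v}\|_1\le C(\|\nabla\times\mathbf{v}\|+\|\nabla\cdot\mathbf{v}\|)$ on $\bm{H}_0(\mathbf{curl})\cap\bm{H}(\mathrm{div})$ to $\mathbf{E}_{1\tau}$ and the analogous bound for $\mathbf{H}_{1\tau}$, combined with Lemma \ref{l3.4.3} and the curl estimates from the previous paragraph. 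The remaining time integrals in $H^2$ (or $H^3$ for $\mathbf{u}_{2\tau}$) then follow by repeating the same bootstrap applied to the equations satisfied by $\tfrac{\partial}{\partial t}\mathbf{E}_{1\tau},\tfrac{\partial}{\partial t}\mathbf{H}_{1\tau},\tfrac{\partial}{\partial t}p_{2\tau}$.

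The main obstacle will be keeping all constants independent of $\tau$ while chaining the per-interval energy inequalities and handling the nonlocal operator $(c_0+\mathcal{B})$ in the elliptic bootstrap for $p_{2\tau}$; the commutator identity of Lemma \ref{l3.2.2} and the boundedness and self-adjointness of $(c_0+\mathcal{B})^{-1}$ on $L^2(\Omega)$ are what make this step go through cleanly, so once the energy estimate of Step one is summed uniformly over $n$ in the manner of \eqref{e3.4.15}, the rest reduces to algebraic manipulations already developed in Lemmas \ref{l3.3.6}--\ref{l3.3.7} and Lemma \ref{l3.4.4}.
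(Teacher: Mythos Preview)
Your proposal is correct and takes essentially the same approach as the paper: the paper omits the proof of this lemma, remarking only that one ``repeat[s] the procedure in the proof of the above lemma by taking derivative with respect to $t$,'' which is precisely your Step one of differentiating \eqref{e3.4.5} and \eqref{e3.4.7} once more on each $I_n$, exploiting that the coupling terms $L\nabla\tilde p^{n-1}$ and $L\mathbf{E}_{1\tau}(t_n)$ vanish under $\partial_t$, and then summing the per-interval energy inequalities as in \eqref{e3.4.13}--\eqref{e3.4.15}. Your subsequent recovery of the curl bounds from the strong form and the elliptic bootstrap for the spatial $H^2$/$H^4$ regularity via Lemma \ref{l3.2.2} and the Maxwell div--curl estimate mirror exactly the pattern already established in Lemmas \ref{l3.3.6}--\ref{l3.3.7} and Lemma \ref{l3.4.4}, so there is nothing to add.
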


\subsection{Convergence order of splitting approximate solution}\label{s3.4.3}
For any $0\leq s\leq t\leq T$, let $\mathbf{\Phi}_{s}^{t}:= (\mathcal{E}_{s}^{t},\mathcal{H}_{s}^{t},\mathcal{P}_{s}^{t})$ with $\mathbf{\Phi}_{s}^{s}=\mathbf{I}$ be the solution operator to \eqref{e3.2.8}. Then, for any $\mathbf{U}_{0}$, the function
\begin{equation*}
	\mathbf{U}(t)=\mathbf{\Phi}_{s}^{t}\mathbf{U}_{0}=(\mathcal{E}_{s}^{t}\mathbf{U}_{0},\mathcal{H}_{s}^{t}\mathbf{U}_{0},\mathcal{P}_{s}^{t}\mathbf{U}_{0})
\end{equation*}
is the unique solution to \eqref{e3.2.8} satisfying $\mathbf{U}(s)=\mathbf{U}_{0}$. 
For any $1\leq n\leq N$, noticing the semi-group property $\mathbf{\Phi}_{s}^{t}=\mathbf{\Phi}_{\iota}^{t}\mathbf{\Phi}_{s}^{\iota}$ for any $0\leq s\leq \iota\leq t$, we have a decomposition
\begin{equation}\label{e3.4.16}
	\begin{aligned}
		&\ \mathbf{U}(t_{n}) - \tilde{\mathbf{U}}^{n}
		= \mathbf{\Phi}_{t_{0}}^{t_{n}}\mathbf{U}_{0} - \mathbf{\Phi}_{t_{n}}^{t_{n}}\tilde{\mathbf{U}}^{n}
		\\
		=&\ \mathbf{\Phi}_{t_{0}}^{t_{n}}\tilde{\mathbf{U}}^{0} \mp \mathbf{\Phi}_{t_{1}}^{t_{n}}\tilde{\mathbf{U}}^{1} \mp \cdots \mp \mathbf{\Phi}_{t_{n-1}}^{t_{n}}\tilde{\mathbf{U}}^{n-1} - \mathbf{\Phi}_{t_{n}}^{t_{n}}\tilde{\mathbf{U}}^{n}
		\\
		=& \sum_{i=1}^{n}(\mathbf{\Phi}_{t_{i-1}}^{t_{n}}\tilde{\mathbf{U}}^{i-1} - \mathbf{\Phi}_{t_{i}}^{t_{n}}\tilde{\mathbf{U}}^{i})
		= \sum_{i=1}^{n}\mathbf{\Phi}_{t_{i}}^{t_{n}}(\mathbf{\Phi}_{t_{i-1}}^{t_{i}}\tilde{\mathbf{U}}^{i-1} - \tilde{\mathbf{U}}^{i}).
	\end{aligned}
\end{equation}

For any $1\leq i\leq n\leq N$ and $t\in I_{i}$, define
\begin{equation*}
	\Theta_{i-1}(t) \!=\! \mathbf{\Phi}_{t_{i-1}}^{t}\tilde{\mathbf{U}}^{i-1} - \mathbf{\Gamma}_{t_{i-1}}^{t}\mathbf{R}_{t_{i-1}}^{t}\tilde{\mathbf{U}}^{i-1}
	:=\! \big(\Theta_{(i-1)\mathbf{E}}(t),\Theta_{(i-1)\mathbf{H}}(t),\Theta_{(i-1)p}(t)\big).
\end{equation*}
Then, $\Theta_{i-1}(t_{i-1}) = (\mathbf{0},\mathbf{0},0)$.

\begin{lemma}\label{l3.4.6}
	Assume (H1) and (H2$^{\prime}$). Then there exists a constant $C>0$ independent of $\tau$ such that for all $1\leq i\leq N$ and $t\in I_{i}$,
	\begin{equation*}
		\|\Theta_{(i-1)\mathbf{E}}(t)\| + \|\Theta_{(i-1)\mathbf{H}}(t)\| + \|\Theta_{(i-1)p}(t)\| \leq C \tau^{2}.
	\end{equation*}
\end{lemma}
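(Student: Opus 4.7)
The strategy is to derive a perturbed system for $\Theta_{i-1}$ on $I_i$ that is structurally identical to \eqref{e3.2.8}, has zero initial data at $t_{i-1}$, and carries a source of pointwise size $O(\tau)$ in the natural $L^2$-norms, then close the estimate with a ``differentiate-the-norm'' (rather than squared-norm) energy argument which produces an $L^1$-in-time (rather than $L^2$-in-time) control of the source. A naive squared-norm closure would yield only $O(\tau^{3/2})$, which is too weak to support the first-order global convergence obtained via the telescoping identity \eqref{e3.4.16}.

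Subtracting \eqref{e3.4.5} and \eqref{e3.4.7} from \eqref{e3.2.8}, and using $p_{2\tau}(t_{i-1}) = \tilde{p}^{i-1}$, the components of $\Theta_{i-1}$ satisfy, for every $(\mathbf{D},\mathbf{B},q) \in \mathbb{S}$ and $t \in I_i$,
\begin{align*}
  (\epsilon\partial_t\Theta_{(i-1)\mathbf{E}}, \mathbf{D}) & + (\sigma\Theta_{(i-1)\mathbf{E}}, \mathbf{D}) - (\nabla\times\Theta_{(i-1)\mathbf{H}}, \mathbf{D}) - (L\nabla\Theta_{(i-1)p}, \mathbf{D}) \\
   & = (L\nabla(p_{2\tau}(t)-\tilde{p}^{i-1}), \mathbf{D}), \\
  (\mu\partial_t\Theta_{(i-1)\mathbf{H}}, \mathbf{B}) & + (\nabla\times\Theta_{(i-1)\mathbf{E}}, \mathbf{B}) = 0, \\
  (\partial_t(c_0+\mathcal{B})\Theta_{(i-1)p}, q) & + (\kappa\nabla\Theta_{(i-1)p}, \nabla q) - (L\Theta_{(i-1)\mathbf{E}}, \nabla q) \\
   & = (L(\mathbf{E}_{1\tau}(t)-\mathbf{E}_{1\tau}(t_i)), \nabla q),
\end{align*}
with $\Theta_{i-1}(t_{i-1}) = \mathbf{0}$. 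Each right-hand side is a time-increment of a split-solution quantity across an interval of length at most $\tau$, so the fundamental theorem of calculus gives $\|\nabla(p_{2\tau}(t)-\tilde{p}^{i-1})\| \le \tau\sup_{s\in[0,T]}\|\nabla\partial_s p_{2\tau}(s)\|$ and $\|\nabla\cdot(\mathbf{E}_{1\tau}(t)-\mathbf{E}_{1\tau}(t_i))\| \le \tau\sup_{s\in[0,T]}\|\partial_s\nabla\cdot\mathbf{E}_{1\tau}(s)\|$. Both pointwise-in-time suprema are finite: reading $\partial_t p_{2\tau}$ and $\partial_t\mathbf{E}_{1\tau}$ off the strong forms of \eqref{e3.4.5}, \eqref{e3.4.7}, applying Lemma~\ref{l3.2.2} and the $H^1$-boundedness of $(c_0+\mathcal{B})^{-1}$, and using the uniform spatial regularity of Lemma~\ref{l3.4.5} together with~(H2$'$), controls them by a constant independent of $\tau$.

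For closure, I would test the three equations against $\Theta_{(i-1)\mathbf{E}}$, $\Theta_{(i-1)\mathbf{H}}$, and $\Theta_{(i-1)p}$ and add; the curl terms cancel by the tangential boundary condition, while integration by parts transfers $\nabla$ from $\Theta_{(i-1)p}$ onto $\mathbf{E}_{1\tau}(t)-\mathbf{E}_{1\tau}(t_i)$ in the $p$-forcing. The quantity $\sigma\|\Theta_{(i-1)\mathbf{E}}\|^2 + \kappa\|\nabla\Theta_{(i-1)p}\|^2 - 2L(\Theta_{(i-1)\mathbf{E}}, \nabla\Theta_{(i-1)p})$ is nonnegative by~(H1) and \eqref{e3.2.9}. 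Setting $Q(t) = \epsilon\|\Theta_{(i-1)\mathbf{E}}\|^2 + \mu\|\Theta_{(i-1)\mathbf{H}}\|^2 + ((c_0+\mathcal{B})\Theta_{(i-1)p}, \Theta_{(i-1)p})$ and discarding the nonnegative dissipation leaves $\tfrac12 \tfrac{d}{dt} Q \le C\tau\, Q^{1/2}(t)$, equivalently $\tfrac{d}{dt} Q^{1/2}(t) \le C\tau$; integrating from $t_{i-1}$, where $Q = 0$, gives $Q^{1/2}(t) \le C\tau^2$, and the norm equivalence supplied by \eqref{e3.2.7} finishes the estimate. The principal obstacle is the passage from the $L^2$-in-time regularity estimates of Lemma~\ref{l3.4.5} to the pointwise-in-time $H^1$ bounds on $\partial_t p_{2\tau}$ and $\partial_t\mathbf{E}_{1\tau}$; this is what the strong-form manipulations above accomplish.
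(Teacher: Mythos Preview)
Your argument is correct and takes a genuinely different route from the paper's. The paper treats the three components of $\Theta_{i-1}$ \emph{separately}: for $\Theta_{(i-1)\mathbf{E}}$ it keeps $(\nabla\times\Theta_{(i-1)\mathbf{H}},\mathbf{D})$ on the right-hand side and bounds $\|\nabla\times\Theta_{(i-1)\mathbf{H}}(\theta)\|\le C\tau$ by a crude triangle inequality against $\|\nabla\times\nabla\times\mathcal{E}_{t_{i-1}}^{\nu}\tilde{\mathbf{U}}^{i-1}\|$ and $\|\nabla\times\nabla\times\mathbf{E}_{1\tau}(\nu)\|$ (invoking the $H^{2}$-bounds of Lemmas~\ref{l3.3.7} and~\ref{l3.4.5}); for $\Theta_{(i-1)p}$ it tests against the weighted function $(c_{0}+\mathcal{B})\Theta_{(i-1)p}$ so that no cross-coupling with $\Theta_{(i-1)\mathbf{E}}$ survives, and the source is $\nabla\cdot\boldsymbol{\xi}_{(i-1)\mathbf{E}}$ with $\boldsymbol{\xi}_{(i-1)\mathbf{E}}=\mathcal{E}_{t_{i-1}}^{t}\tilde{\mathbf{U}}^{i-1}-(\mathcal{E}_{1})_{t_{i-1}}^{t}\tilde{\mathbf{U}}^{i-1}$. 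In particular the paper does \emph{not} use (H1) in this lemma.

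You instead treat all three components at once by the natural coupled energy of \eqref{e3.2.8}: the curl terms cancel exactly by the $\bm{H}_{0}(\mathbf{curl})$ boundary condition, the mixed term $-2L(\Theta_{(i-1)\mathbf{E}},\nabla\Theta_{(i-1)p})$ is absorbed into the nonnegative dissipation via~(H1) and \eqref{e3.2.9}, and only the two genuine splitting residuals $p_{2\tau}(t)-\tilde{p}^{i-1}$ and $\mathbf{E}_{1\tau}(t)-\mathbf{E}_{1\tau}(t_{i})$ appear as forcing. This is closer in spirit to the basic stability estimate (Lemma~\ref{l3.3.1}) and avoids the double-curl bounds, at the cost of relying essentially on the smallness condition~(H1). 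Both proofs share the same key mechanism (the $\frac{d}{dt}Q^{1/2}$ step) to turn an $O(\tau)$ pointwise source on an interval of length $\tau$ into an $O(\tau^{2})$ bound. One point to make fully precise in your write-up is the pointwise-in-time control of $\|\nabla\partial_{s}p_{2\tau}(s)\|$: the lemmas you cite give it only in $L^{2}(0,T)$, so you do need the strong-form manipulation you describe, together with the commutation Lemma~\ref{l3.2.2} and the pointwise bounds $\|p_{2\tau}(t)\|_{4}+\|\mathbf{E}_{1\tau}(t)\|_{2}\le C$ from Lemma~\ref{l3.4.5}, to conclude.
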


\begin{proof}
	For any $1\leq i\leq N$, noticing that $\mathbf{\Phi}_{s}^{t}$, $\mathbf{\Gamma}_{s}^{t}$ and $\mathbf{R}_{s}^{t}$ are solution operators, and then $\Theta_{(i-1)\mathbf{E}}(t)$ $(t\in I_{i})$ satisfies a variational equation for all $\mathbf{D}\in \bm{H}_{0}(\mathbf{curl})$,
	\begin{equation*}
		\big(\epsilon\frac{\partial}{\partial t}\Theta_{(i-1)\mathbf{E}}(t),\mathbf{D}\big) \!+\! \big(\sigma \Theta_{(i-1)\mathbf{E}}(t),\mathbf{D}\big) \!-\! \big(\nabla\times \Theta_{(i-1)\mathbf{H}}(t),\mathbf{D}\big) \!\!=\!\! \big(L\nabla \eta_{(i-1)p}(t),\mathbf{D}\big),
	\end{equation*}
	where $\eta_{(i-1)p}(t) = \mathcal{P}_{t_{i-1}}^{t}\tilde{\mathbf{U}}^{i-1}-\tilde{p}^{i-1}$. Taking $\mathbf{D}=\Theta_{(i-1)\mathbf{E}}(t)$ in above equation yields
	\begin{align*}
		&\ \big(\epsilon\frac{\partial}{\partial t}\Theta_{(i-1)\mathbf{E}}(t),\Theta_{(i-1)\mathbf{E}}(t)\big)
		+ \sigma\|\Theta_{(i-1)\mathbf{E}}(t)\|^{2}
		\\
		=&\ \big(\nabla\times \Theta_{(i-1)\mathbf{H}}(t),\Theta_{(i-1)\mathbf{E}}(t)\big)
		+ \big(L\nabla \eta_{(i-1)p}(t),\Theta_{(i-1)\mathbf{E}}(t)\big)
		\\
		\leq&\ \big(\|\nabla\times \Theta_{(i-1)\mathbf{H}}(t)\|+L\|\nabla \eta_{(i-1)p}(t)\|\big)\|\Theta_{(i-1)\mathbf{E}}(t)\|.
	\end{align*}
	It is easy to see $\big(\frac{\partial}{\partial t}\Theta_{(i-1)\mathbf{E}}(t),\Theta_{(i-1)\mathbf{E}}(t)\big)
	=\|\Theta_{(i-1)\mathbf{E}}(t)\|\frac{\mathrm{d}}{\mathrm{d}t}\|\Theta_{(i-1)\mathbf{E}}(t)\|$, then
	\begin{equation*}
		\|\Theta_{(i-1)\mathbf{E}}(t)\|
		\leq \frac{1}{\epsilon}\int_{t_{i-1}}^{t}{\|\nabla\times \Theta_{(i-1)\mathbf{H}}(\theta)\|}{\mathrm{d}\theta}
		+ \frac{L}{\epsilon} \int_{t_{i-1}}^{t}{\|\nabla \eta_{(i-1)p}(\theta)\|}{\mathrm{d}\theta}.
	\end{equation*}
	From Lemmas \ref{l3.3.7} and \ref{l3.4.5}, it follows that
	\begin{equation*}
		\|\nabla\times \Theta_{(i-1)\mathbf{H}}(\theta)\| \leq \frac{1}{\mu}\int_{t_{i-1}}^{\theta}
		{\!\!\big( \|\nabla \!\times\! \nabla \!\times\! \mathcal{E}_{t_{i-1}}^{\nu}\tilde{\mathbf{U}}^{i-1}\| \!+\! \|\nabla \!\times\! \nabla \!\times\! \mathbf{E}_{1\tau}(\nu)\| \big)}{\mathrm{d}\nu} 
		\leq C\tau.
	\end{equation*}
	According to the third equation in \eqref{e3.2.8} and the formula of integration by parts, we have
	\begin{equation*}
		(c_{0}+\mathcal{B})\eta_{(i-1)p}(\theta) = \int_{t_{i-1}}^{\theta}
		{\big( \kappa\Delta \mathcal{P}_{t_{i-1}}^{\nu}\tilde{\mathbf{U}}^{i-1}
			- L \nabla\cdot \mathcal{E}_{t_{i-1}}^{\nu}\tilde{\mathbf{U}}^{i-1} + g(\nu) \big)}{\mathrm{d}\nu}.
	\end{equation*}
	By virtue of Lemma \ref{l3.2.2}, we obtain
	\begin{align*}
		&\ (c_{0}+\mathcal{B})\nabla\eta_{(i-1)p}(\theta) = \nabla(c_{0}+\mathcal{B})\eta_{(i-1)p}(\theta)
		\\
		=&\ \int_{t_{i-1}}^{\theta}
		{\big( \kappa\nabla\Delta \mathcal{P}_{t_{i-1}}^{\nu}\tilde{\mathbf{U}}^{i-1}
			- L \nabla(\nabla\cdot \mathcal{E}_{t_{i-1}}^{\nu}\tilde{\mathbf{U}}^{i-1}) + \nabla g(\nu) \big)}{\mathrm{d}\nu},
	\end{align*}
	which together with \eqref{e3.2.7} and Lemma \ref{l3.3.7} leads to
	\begin{align*}
		\|\nabla\eta_{(i-1)p}(\theta)\| \leq \frac{1}{c_{0}}\|(c_{0}+\mathcal{B})\nabla\eta_{(i-1)p}(\theta)\|
		\leq C\tau.
	\end{align*}
	Thus, $\|\Theta_{(i-1)\mathbf{E}}(t)\| \leq C \tau^{2}$. In a similar way, we can prove $\|\Theta_{(i-1)\mathbf{H}}(t)\| \leq C \tau^{2}$.
	
	By virtue of solution operators $\mathbf{\Phi}_{s}^{t}$, $\mathbf{\Gamma}_{s}^{t}$ and $\mathbf{R}_{s}^{t}$ again, we get $\Theta_{(i-1)p}(t)$ $(t\in I_{i})$ satisfies a variational equation
	\begin{equation*}
		\big(\frac{\partial}{\partial t}(c_{0}+\mathcal{B})\Theta_{(i-1)p}(t),q\big) + \big(\kappa\nabla \Theta_{(i-1)p}(t),\nabla q\big) \!=\!  -\big(L \nabla\cdot\boldsymbol{\xi}_{(i-1)\mathbf{E}}(t),q\big), \ \forall q\in H^{1}_{0},
	\end{equation*}
	where $\boldsymbol{\xi}_{(i-1)\mathbf{E}}(t) = \mathcal{E}_{t_{i-1}}^{t}\tilde{\mathbf{U}}^{i-1}-(\mathcal{E}_{1})_{t_{i-1}}^{t}\tilde{\mathbf{U}}^{i-1}$.
	Taking $q=(c_{0}+\mathcal{B})\Theta_{(i-1)p}(t)$ in above equation, we have
	\begin{align*}
		& \big(\frac{\partial}{\partial t}(c_{0}\!+\!\mathcal{B})\Theta_{(i-1)p}(t),(c_{0}\!+\!\mathcal{B})\Theta_{(i-1)p}(t)\big) \!\!+\!\! \big(\kappa\nabla \Theta_{(i-1)p}(t),\nabla (c_{0}\!+\!\mathcal{B})\Theta_{(i-1)p}(t)\big)
		\\
		& \quad = -\big(L \nabla\cdot\boldsymbol{\xi}_{(i-1)\mathbf{E}}(t),(c_{0}+\mathcal{B})\Theta_{(i-1)p}(t)\big).
	\end{align*}
	According to Lemma \ref{l3.2.2}, we get
	\begin{equation*}
		\big(\kappa\nabla \Theta_{(i-1)p}(t),\nabla (c_{0}+\mathcal{B})\Theta_{(i-1)p}(t)\big)
		\!=\! \big(\kappa\nabla \Theta_{(i-1)p}(t),(c_{0}+\mathcal{B})\nabla \Theta_{(i-1)p}(t)\big) \geq 0.
	\end{equation*}
	By means of \eqref{e3.2.7}, we obtain
	\begin{equation*}
		\|\Theta_{(i-1)p}(t)\|
		\leq \frac{1}{c_{0}} \|(c_{0}+\mathcal{B})\Theta_{(i-1)p}(t)\|
		\leq \frac{L}{c_{0}} \int_{t_{i-1}}^{t}{\|\nabla\cdot \boldsymbol{\xi}_{(i-1)\mathbf{E}}(\theta)\|}{\mathrm{d}\theta}.
	\end{equation*}
	Applying Lemmas \ref{l3.3.7} and \ref{l3.4.5} yields
	\begin{align*}
		&\ \|\nabla\cdot \boldsymbol{\xi}_{(i-1)\mathbf{E}}(\theta)\|
		\\
		\leq&\ \frac{1}{\epsilon}\int_{t_{i-1}}^{\theta}
		{\!\!\big( \sigma \|\nabla \!\cdot\! \mathcal{E}_{t_{i-1}}^{\nu}\tilde{\mathbf{U}}^{i-1}\|
		\!+\! L \|\Delta \mathcal{P}_{t_{i-1}}^{\nu}\tilde{\mathbf{U}}^{i-1}\|
		\!+\! \sigma \|\nabla \!\cdot\! \mathbf{E}_{1\tau}(\nu)\|
		\!+\! L \|\Delta \tilde{p}^{i-1}\| \big)}{\mathrm{d}\nu}
		\\
		\leq&\ C\tau,
	\end{align*}
	which leads to $\|\Theta_{(i-1)p}(t)\| \leq C \tau^{2}$. 
	Thus, the proof is completed.
\end{proof}

Now, we are ready to study the error estimates between the exact solution and the  splitting approximation.

\begin{theorem}\label{t3.4.7}
	Assume (H1) and (H2$^{\prime}$). Let $(\mathbf{E},\mathbf{H},\mathbf{u},p)$ and $(\tilde{\mathbf{E}}^{n},\tilde{\mathbf{H}}^{n},\tilde{\mathbf{u}}^{n},\tilde{p}^{n})$ be exact and splitting approximate solutions to \eqref{e3.2.4}, respectively. Then there exists a constant $C>0$ independent of $\tau$ such that for all $0\leq n\leq N$,
	\begin{equation*}
		\|\mathbf{E}(t_{n})-\tilde{\mathbf{E}}^{n}\| + \|\mathbf{H}(t_{n})-\tilde{\mathbf{H}}^{n}\| + \|\mathbf{u}(t_{n})-\tilde{\mathbf{u}}^{n}\|_{1} + \|p(t_{n})-\tilde{p}^{n}\| \leq C\tau.
	\end{equation*}
\end{theorem}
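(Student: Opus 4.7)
The plan is to use the telescoping decomposition \eqref{e3.4.16}, which already reduces the global error to a sum of local splitting errors propagated by the exact solution operator, and then to combine the local consistency estimate (Lemma \ref{l3.4.6}) with a stability bound for the operator $\mathbf{\Phi}_{s}^{t}$.

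First, observe that the map $\mathbf{\Phi}_{s}^{t}$ is affine in its argument, so the difference of two solutions of \eqref{e3.2.8} issuing from distinct initial data (with identical source terms $\mathbf{j}$ and $g$) satisfies the \emph{homogeneous} version of \eqref{e3.2.8}. Applying the a priori bound of Lemma \ref{l3.3.1} to this difference equation (with zero sources) gives the stability estimate
\begin{equation*}
\|\mathbf{\Phi}_{s}^{t}\mathbf{V}_{1}-\mathbf{\Phi}_{s}^{t}\mathbf{V}_{2}\| \leq C\|\mathbf{V}_{1}-\mathbf{V}_{2}\|, \quad 0\leq s\leq t\leq T,
\end{equation*}
with $C$ depending only on $T$ and the physical parameters. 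I would state this as a preliminary observation before diving into the estimate.

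Second, taking norms in the telescoping identity \eqref{e3.4.16}, using the semigroup property, and invoking the stability bound above followed by Lemma \ref{l3.4.6}, I obtain
\begin{equation*}
\|\mathbf{U}(t_{n})-\tilde{\mathbf{U}}^{n}\| \leq \sum_{i=1}^{n}\|\mathbf{\Phi}_{t_{i}}^{t_{n}}(\mathbf{\Phi}_{t_{i-1}}^{t_{i}}\tilde{\mathbf{U}}^{i-1}-\tilde{\mathbf{U}}^{i})\| \leq C\sum_{i=1}^{n}\|\Theta_{i-1}(t_{i})\| \leq Cn\tau^{2} \leq CT\tau,
\end{equation*}
which controls $\|\mathbf{E}(t_{n})-\tilde{\mathbf{E}}^{n}\|+\|\mathbf{H}(t_{n})-\tilde{\mathbf{H}}^{n}\|+\|p(t_{n})-\tilde{p}^{n}\|$ by $C\tau$.

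Finally, for the displacement error, I would subtract the defining equations \eqref{e3.2.6} for $\mathbf{u}(t_{n})$ and $\tilde{\mathbf{u}}^{n}$ (recall $\tilde{\mathbf{u}}^{n}=\mathbf{u}_{2\tau}(t_{n})$ satisfies the first line of \eqref{e3.4.6} with $p_{2\tau}(t_{n})=\tilde{p}^{n}$). Testing the resulting equation with $\mathbf{v}=\mathbf{u}(t_{n})-\tilde{\mathbf{u}}^{n}$ and using the coercivity of $a(\cdot,\cdot)$ together with the Cauchy--Schwarz inequality gives $\|\mathbf{u}(t_{n})-\tilde{\mathbf{u}}^{n}\|_{1}\leq C\|p(t_{n})-\tilde{p}^{n}\|\leq C\tau$, which completes the argument.

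The main obstacle is the stability step: one must be confident that Lemma \ref{l3.3.1} indeed yields a constant $C$ independent of the starting time $t_{i}$, so that the same bound can be applied uniformly to all $n$ propagations. This is not automatic but does follow from Lemma \ref{l3.3.1} once it is observed that the Gronwall constant depends only on the length of the time window (bounded by $T$) and on the coefficients, and that the homogeneous difference equation has no forcing. Once this uniform stability is in hand, the rest of the proof is a routine telescoping plus summation of $n=T/\tau$ terms of size $O(\tau^{2})$.
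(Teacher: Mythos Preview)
Your proof is correct and follows essentially the same approach as the paper: telescoping via \eqref{e3.4.16}, stability of $\mathbf{\Phi}_{t_i}^{t_n}$ on differences via Lemma~\ref{l3.3.1}, the local $O(\tau^{2})$ bound from Lemma~\ref{l3.4.6}, and the coercivity of $a(\cdot,\cdot)$ for the displacement. The only cosmetic difference is that the paper squares the norm and applies Cauchy--Schwarz to the sum (picking up a factor $n$) before invoking stability and the local estimate, whereas you work directly with the triangle inequality on the norms; both routes yield the same $O(\tau)$ bound.
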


\begin{proof}
	For any $1\leq n\leq N$, from \eqref{e3.4.16} and Lemmas \ref{l3.3.1}, \ref{l3.4.6}, it follows that
	\begin{align*}
		&\ \|\mathbf{E}(t_{n})-\tilde{\mathbf{E}}^{n}\|^{2}
		+\|\mathbf{H}(t_{n})-\tilde{\mathbf{H}}^{n}\|^{2}
		+\|p(t_{n})-\tilde{p}^{n}\|^{2}
		\\
		=&\ \Big( \|\sum_{i=1}^{n}\mathbf{\Phi}_{t_{i}}^{t_{n}}(\mathbf{\Phi}_{t_{i-1}}^{t_{i}}\tilde{\mathbf{U}}^{i-1} - \tilde{\mathbf{U}}^{i})\| \Big)^{2}
		\leq \Big( \sum_{i=1}^{n}\|\mathbf{\Phi}_{t_{i}}^{t_{n}}(\mathbf{\Phi}_{t_{i-1}}^{t_{i}}\tilde{\mathbf{U}}^{i-1} - \tilde{\mathbf{U}}^{i})\| \Big)^{2}
		\\
		\leq&\ n \sum_{i=1}^{n}\|\mathbf{\Phi}_{t_{i}}^{t_{n}}(\mathbf{\Phi}_{t_{i-1}}^{t_{i}}\tilde{\mathbf{U}}^{i-1} - \tilde{\mathbf{U}}^{i})\|^{2} 
		\leq C N \sum_{i=1}^{n}\|\mathbf{\Phi}_{t_{i-1}}^{t_{i}}\tilde{\mathbf{U}}^{i-1} - \tilde{\mathbf{U}}^{i}\|^{2}
		\\
		=&\ C N \sum_{i=1}^{n}\Big(\|\Theta_{(i-1)\mathbf{E}}(t_{i})\|^{2}+\|\Theta_{(i-1)\mathbf{H}}(t_{i})\|^{2}
		+\|\Theta_{(i-1)p}(t_{i})\|^{2}\Big)
		\\
		\leq&\ C N^{2} \tau^{4}
		\leq C \tau^{2}.
	\end{align*}
	From the third equation in \eqref{e3.2.4}, the first equation in \eqref{e3.4.6} and the coerciveness of $a(\cdot,\cdot)$, it follows that
	\begin{align*}
		\beta_{1} \|\mathbf{u}(t_{n})-\mathbf{u}_{2\tau}(t_{n})\|_{1}^{2} \leq&\ |a(\mathbf{u}(t_{n})-\mathbf{u}_{2\tau}(t_{n}),\mathbf{u}(t_{n})-\mathbf{u}_{2\tau}(t_{n}))| 
		\\
		=&\ \big|\big(p(t_{n})-p_{2\tau}(t_{n}), \alpha \nabla\cdot (\mathbf{u}(t_{n})-\mathbf{u}_{2\tau}(t_{n}))\big)\big| 
		\\
		\leq&\ C\|p(t_{n})-p_{2\tau}(t_{n})\|\|\mathbf{u}(t_{n})-\mathbf{u}_{2\tau}(t_{n})\|_{1},
	\end{align*}
	which implies $\|\mathbf{u}(t_{n})-\tilde{\mathbf{u}}^{n}\|_{1} \leq C \|p(t_{n})-\tilde{p}^{n}\| \leq C\tau$.
\end{proof}

\section{Finite element discretization and error estimates}\label{s3.5}
In this section, we propose a numerical approximation to \eqref{e3.2.1}--\eqref{e3.2.3} based on splitting technique and derive its convergence order. We first introduce N\'{e}d\'{e}lec's finite element spaces \cite{Mo1991,Mo2003,Ne1980}, and apply FEM and backward Euler method to construct full-discretized schemes to \eqref{e3.4.5} and \eqref{e3.4.6}, respectively. Then, we define a splitting numerical solution and study its stability. Finally, we derive error estimates.

\subsection{Splitting numerical scheme}\label{s3.5.1}
For simplicity, we assume that the domain $\Omega$ is a polyhedron, otherwise, we can construct a polyhedron to approximate $\Omega$. Let $\mathcal{T}_{h}$ be a regular tetrahedral partition of $\Omega$. For each element $K\in \mathcal{T}_{h}$ with edges $\mathcal{E}_{K}$ and faces $\mathcal{F}_{K}$, let $h_{K}$ be its diameter and $h=\max_{K\in \mathcal{T}_{h}}h_{K}$ be the mesh size of $\mathcal{T}_{h}$. For any integer $k\geq 0$, denote by $\mathbb{P}_{k}(\Omega)$ and $\tilde{\mathbb{P}}_{k}(\Omega)$ the spaces of polynomials with degree no more than $k$ and homogeneous polynomials with degree $k$, respectively.
For any $k\geq 1$, define two subspaces of $(\mathbb{P}_{k}(K))^{3}$ as follows. 
\begin{align*}
	\mathbb{S}_{k}(K) =&\ \{\mathbf{P}\in (\tilde{\mathbb{P}}_{k}(K))^{3}: \mathbf{P}(\mathbf{x})\cdot\mathbf{x}=0, \ \forall\, \mathbf{x}\in K\},
	\\
	\mathbb{R}_{k}(K) =&\ (\mathbb{P}_{k-1}(K))^{3}\oplus \mathbb{S}_{k}(K).
\end{align*}
Then, we construct four finite element spaces by
\begin{align*}
	\mathbb{E}_{h} =&\ \{\mathbf{E}_{h}\in \bm{H}(\mathbf{curl}): \mathbf{E}_{h}|_{K}\in \mathbb{R}_{k}(K), \ \forall\, K\in \mathcal{T}_{h}\},
	\\
	\mathbb{H}_{h} =&\ \{\mathbf{H}_{h}\in \bm{L}^{2}(\Omega): \mathbf{H}_{h}|_{K}\in (\mathbb{P}_{k-1}(K))^{3}, \ \forall\, K\in \mathcal{T}_{h}\},
	\\
	\mathbb{U}_{h} =&\ \{\mathbf{u}_{h}\in \bm{C}(\Omega): \mathbf{u}_{h}|_{\partial\Omega}=0,\, \mathbf{u}_{h}|_{K}\in (\mathbb{P}_{k}(K))^{3}, \ \forall\, K\in \mathcal{T}_{h}\},
	\\
	\mathbb{P}_{h} =&\ \{p_{h}\in C(\Omega): p_{h}|_{\partial\Omega}=0,\, p_{h}|_{K}\in \mathbb{P}_{k}(K), \ \forall\, K\in \mathcal{T}_{h}\},
\end{align*}
where $\mathbb{E}_{h}$ is the space of N\'{e}d\'{e}lec edge elements \cite{Ne1980}. In this work, we fix $k=1$.

For notation simplicity, we simultaneously denote by $\mathcal{P}_{h}$ the $L^{2}$-orthogonal projection operator from $\bm{H}(\mathbf{curl})$ to $\mathbb{E}_{h}$, or from $\bm{L}^{2}(\Omega)$ to $\mathbb{H}_{h}$, or from $\bm{H}^{1}_{0}$ to $\mathbb{U}_{h}$, or from $H^{1}_{0}$ to $\mathbb{P}_{h}$ when no confusion occurs. Then, for any $(\mathbf{E},\mathbf{H},\mathbf{u},p) \in \bm{H}^{2} \times \bm{H}^{1} \times \bm{H}^{2} \times H^{2}$, there hold \cite{ErGu2021}
	\begin{equation}\label{e3.5.1}
		\begin{aligned}
			\|\mathbf{E}-\mathcal{P}_{h}\mathbf{E}\| \leq&\ Ch^{2}\|\mathbf{E}\|_{2},
			\\
			\|\mathbf{H}-\mathcal{P}_{h}\mathbf{H}\| \leq&\ Ch\|\mathbf{H}\|_{1},
			\\
			\|\mathbf{u}-\mathcal{P}_{h}\mathbf{u}\| + h \|\mathbf{u}-\mathcal{P}_{h}\mathbf{u}\|_{1} 
			\leq&\ Ch^{2}\|\mathbf{u}\|_{2},
			\\
			\|p-\mathcal{P}_{h}p\| + h \|\nabla (p-\mathcal{P}_{h}p)\| 
			\leq&\ Ch^{2}\|p\|_{2}.
		\end{aligned}
	\end{equation}

Next, we construct a fully discretized FEM approximation  and define a splitting numerical solution $(\mathbf{E}_{h}^{n},\mathbf{H}_{h}^{n},\mathbf{u}_{h}^{n},p_{h}^{n})$ $(0\leq n\leq N)$ to \eqref{e3.2.4}. Take initial setting
\begin{equation*}
	(\mathbf{E}_{h}^{0},\mathbf{H}_{h}^{0},\mathbf{u}_{h}^{0},p_{h}^{0})
	:=(\mathcal{P}_{h}\mathbf{E}_{0},\mathcal{P}_{h}\mathbf{H}_{0},\mathcal{P}_{h}\mathbf{u}_{0},\mathcal{P}_{h}p_{0}).
\end{equation*}
Assume that we have already known $(\mathbf{E}_{h}^{n-1},\mathbf{H}_{h}^{n-1},\mathbf{u}_{h}^{n-1},p_{h}^{n-1})$ for $n\geq 1$, we define the next approximation $\mathbf{U}_{h}^{n}$ via approximating \eqref{e3.4.5} and \eqref{e3.4.6}.

A fully discretized scheme to \eqref{e3.4.5} on $I_{n}$ is defined as  seeking $(\mathbf{E}_{1\tau,h}^{n},\mathbf{H}_{1\tau,h}^{n})\in \mathbb{E}_{h}\times \mathbb{H}_{h}$ such that for all $(\mathbf{D}_{h},\mathbf{B}_{h})\in \mathbb{E}_{h}\times \mathbb{H}_{h}$,
\begin{equation}\label{e3.5.2}
	\begin{aligned}
		& (\epsilon \bar\partial_{t}\mathbf{E}_{1\tau,h}^{n},\mathbf{D}_{h}) \!+\! (\sigma \mathbf{E}_{1\tau,h}^{n},\mathbf{D}_{h}) \!-\! (\mathbf{H}_{1\tau,h}^{n},\nabla\times \mathbf{D}_{h})
		\!=\! (L\nabla p_{h}^{n-1},\mathbf{D}_{h}) \!+\! (\mathbf{j}(t_{n}),\mathbf{D}_{h}),
		\\
		& (\mu \bar\partial_{t}\mathbf{H}_{1\tau,h}^{n},\mathbf{B}_{h}) + (\nabla\times \mathbf{E}_{1\tau,h}^{n},\mathbf{B}_{h}) = 0,
	\end{aligned}
\end{equation}
where $\bar\partial_{t}\mathbf{E}_{1\tau,h}^{n} = (\mathbf{E}_{1\tau,h}^{n}-\mathbf{E}_{1\tau,h}^{n-1})/ \tau$ and $(\mathbf{E}_{1\tau,h}^{n-1},\mathbf{H}_{1\tau,h}^{n-1})=(\mathbf{E}_{h}^{n-1},\mathbf{H}_{h}^{n-1})$. Here we have used the formula of integration by parts to convert $\big(\nabla\times \mathbf{H}_{1\tau},\mathbf{D}\big)$ to $\big(\mathbf{H}_{1\tau},\nabla\times \mathbf{D}\big)$ in \eqref{e3.4.5} in order to match the choice of finite element for $\mathbf{H}_{1\tau}$. 
Similarly, a fully discretized scheme to \eqref{e3.4.6} on $I_{n}$ is defined as searching $(\mathbf{u}_{2\tau,h}^{n},p_{2\tau,h}^{n})\in \mathbb{U}_{h}\times  \mathbb{P}_{h}$ such that for all $(\mathbf{v}_{h},q_{h})\in \mathbb{U}_{h} \times \mathbb{P}_{h}$,
\begin{equation}\label{e3.5.3}
	\begin{aligned}
		& (\lambda_{c}\nabla\cdot \mathbf{u}_{2\tau,h}^{n}, \nabla\cdot \mathbf{v}_{h}) + (G\nabla \mathbf{u}_{2\tau,h}^{n}, \nabla \mathbf{v}_{h}) - (p_{2\tau,h}^{n}, \alpha \nabla\cdot \mathbf{v}_{h}) = 0,
		\\
		& (\bar\partial_{t}(c_{0}p_{2\tau,h}^{n} \!\!+\! \alpha\nabla \!\cdot\! \mathbf{u}_{2\tau,h}^{n}),q_{h}) \!\!+\!\! (\kappa\nabla p_{2\tau,h}^{n},\nabla q_{h}) \!\!=\!\! (L\mathbf{E}_{1\tau,h}^{n},\nabla q_{h}) \!\!+\!\! (g(t_{n}),q_{h}),
	\end{aligned}
\end{equation}
where $(\mathbf{u}_{2\tau,h}^{n-1}, p_{2\tau,h}^{n-1}) = (\mathbf{u}_{h}^{n-1}, p_{h}^{n-1})$. 
Then, the splitting numerical solution to \eqref{e3.2.4} at $t_{n}$ is defined by
\begin{equation*}
	(\mathbf{E}_{h}^{n},\mathbf{H}_{h}^{n},\mathbf{u}_{h}^{n},p_{h}^{n}) 
	= (\mathbf{E}_{1\tau,h}^{n},\mathbf{H}_{1\tau,h}^{n},\mathbf{u}_{2\tau,h}^{n},p_{2\tau,h}^{n}), \quad 1\leq n\leq N.
\end{equation*}

In order to study the stability of splitting numerical solution, we first establish a relationship between $\mathbf{u}_{h}^{n}$ and $p_{h}^{n}$ in terms of the first equation in \eqref{e3.5.3}. For any given $p_{h}^{n}=p_{h}\in \mathbb{P}_{h}$, there exists a unique function $\mathbf{u}_{h}\in \mathbb{U}_{h}$ such that
\begin{equation*}
	a(\mathbf{u}_{h},\mathbf{v}_{h}) = (p_{h}, \alpha\nabla\cdot \mathbf{v}_{h}), \quad \forall \, \mathbf{v}_{h}\in \mathbb{U}_{h}.
\end{equation*}
Then, we define an operator $\mathcal{B}_{h}: \mathbb{P}_{h}\rightarrow \mathbb{P}_{h}$ by $\mathcal{B}_{h}p_{h} = \alpha\nabla\cdot \mathbf{u}_{h}$. Similar to the study for the operator $\mathcal{B}$, we can verify that $\mathcal{B}_{h}$ is a linear, bounded, self-adjoint and monotone operator and satisfies $\mathcal{B}_{h}\frac{\partial}{\partial t}=\frac{\partial}{\partial t}\mathcal{B}_{h}$ and $\mathcal{B}_{h}\frac{\partial}{\partial x_{i}} = \frac{\partial}{\partial x_{i}}\mathcal{B}_{h}$ $(i=1,2,3)$. Furthermore, $\|(c_{0}+\mathcal{B}_{h})p_{h}\|^{2} \geq c_{0}^{2} \|p_{h}\|^{2}$. 
Thus, the second equation in \eqref{e3.4.3} is equivalent to
\begin{equation}\label{e3.5.4}
	(\bar\partial_{t}(c_{0} \!+\! \mathcal{B}_{h}) p_{h}^{n},q_{h}) \!+\! (\kappa\nabla p_{h}^{n},\nabla q_{h}) \!=\! (L\mathbf{E}_{h}^{n},\nabla q_{h}) \!+\! (g(t_{n}),q_{h}), \quad \forall \, q_{h}\in \mathbb{P}_{h}.
\end{equation}

\begin{lemma}\label{l3.5.1}
	Assume (H1) and (H2$^{\prime}$). Let $(\mathbf{E}_{h}^{n},\mathbf{H}_{h}^{n},\mathbf{u}_{h}^{n},p_{h}^{n})$ $(0\leq n\leq N)$ be splitting numerical solution to \eqref{e3.2.4}. Then there exists a constant $C>0$ independent of both $\tau$ and $h$ such that
	\begin{equation*}
		\|\mathbf{E}_{h}^{n}\|^{2} + \|\mathbf{H}_{h}^{n}\|^{2} + \|\mathbf{u}_{h}^{n}\|_{1}^{2} + \|p_{h}^{n}\|^{2} + \tau\|\nabla p_{h}^{n}\|^{2} \leq C.
	\end{equation*}
\end{lemma}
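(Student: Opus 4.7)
The plan is to mimic the continuous energy argument used in Lemma~\ref{l3.4.2} at the fully discrete level, i.e., test the fully discretized Maxwell block and the discrete pressure equation with the natural energy test functions, then use the discrete analogue of the identity $\frac{1}{2}\frac{\mathrm{d}}{\mathrm{d}t}\|\cdot\|^{2}$, namely $2(\bar\partial_{t}v^{n},v^{n})\tau \geq \|v^{n}\|^{2}-\|v^{n-1}\|^{2}$, and apply Young's inequality together with a discrete Gronwall argument. The key difference from the continuous case is that the cross-coupling terms involve the pressure at two different time levels ($\nabla p_{h}^{n-1}$ in the Maxwell block and $\nabla p_{h}^{n}$ in the discretized pressure block), so summation in time needs to be handled carefully using (H1).

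First, in \eqref{e3.5.2} take $(\mathbf{D}_{h},\mathbf{B}_{h}) = (\mathbf{E}_{h}^{n},\mathbf{H}_{h}^{n})$ so that the curl terms cancel when the two equations are added, and in the equivalent formulation \eqref{e3.5.4} take $q_{h}=p_{h}^{n}$; note that thanks to the self-adjointness and monotonicity of $c_{0}+\mathcal{B}_{h}$ we have $2\tau(\bar\partial_{t}(c_{0}+\mathcal{B}_{h})p_{h}^{n},p_{h}^{n}) \geq ((c_{0}+\mathcal{B}_{h})p_{h}^{n},p_{h}^{n}) - ((c_{0}+\mathcal{B}_{h})p_{h}^{n-1},p_{h}^{n-1})$. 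Multiplying both by $2\tau$ and summing gives, for $1\leq m\leq N$,
\begin{align*}
&\epsilon\|\mathbf{E}_{h}^{m}\|^{2} + \mu\|\mathbf{H}_{h}^{m}\|^{2} + \bigl((c_{0}+\mathcal{B}_{h})p_{h}^{m},p_{h}^{m}\bigr) + 2\sigma\tau\sum_{n=1}^{m}\|\mathbf{E}_{h}^{n}\|^{2} + 2\kappa\tau\sum_{n=1}^{m}\|\nabla p_{h}^{n}\|^{2}
\\
&\quad \leq C_{0} + 2L\tau\sum_{n=1}^{m}\bigl[(\nabla p_{h}^{n-1},\mathbf{E}_{h}^{n}) + (\mathbf{E}_{h}^{n},\nabla p_{h}^{n})\bigr] + 2\tau\sum_{n=1}^{m}\bigl[(\mathbf{j}(t_{n}),\mathbf{E}_{h}^{n}) + (g(t_{n}),p_{h}^{n})\bigr],
\end{align*}
where $C_{0}$ collects the initial-data contributions bounded via \eqref{e3.5.1}.

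The crucial step is to absorb the cross terms. Applying Young's inequality with the sharp choice of weights $\delta_{1}=\delta_{2}=\sigma$, namely $2L\tau|(\nabla p_{h}^{n-1},\mathbf{E}_{h}^{n})| \leq \sigma\tau\|\mathbf{E}_{h}^{n}\|^{2} + (L^{2}/\sigma)\tau\|\nabla p_{h}^{n-1}\|^{2}$ and likewise for the second cross term with $\nabla p_{h}^{n}$, the contribution to $\|\mathbf{E}_{h}^{n}\|^{2}$ is fully absorbed by $2\sigma\tau\|\mathbf{E}_{h}^{n}\|^{2}$, while a telescoping-in-$n$ rearrangement leaves the coefficient of $\|\nabla p_{h}^{n}\|^{2}$ equal to $2\kappa-2L^{2}/\sigma$ for $1\leq n\leq m-1$ and $2\kappa-L^{2}/\sigma$ for $n=m$, both positive by assumption (H1). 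The initial term $(L^{2}/\sigma)\tau\|\nabla p_{h}^{0}\|^{2}$ is bounded by the initial-data regularity in (H2$^{\prime}$) combined with the $H^{1}$-stability of $\mathcal{P}_{h}$. Using $\|(c_{0}+\mathcal{B}_{h})p_{h}^{m}\|^{2}\geq c_{0}^{2}\|p_{h}^{m}\|^{2}$ to bound the pressure from below, and Young's inequality on the source terms $2\tau(\mathbf{j}(t_{n}),\mathbf{E}_{h}^{n})$ and $2\tau(g(t_{n}),p_{h}^{n})$, we reach an inequality of the form
\begin{equation*}
\epsilon\|\mathbf{E}_{h}^{m}\|^{2} + \mu\|\mathbf{H}_{h}^{m}\|^{2} + c_{0}\|p_{h}^{m}\|^{2} + C_{1}\tau\|\nabla p_{h}^{m}\|^{2} \leq C_{2} + C_{3}\tau\sum_{n=1}^{m}\bigl(\|\mathbf{E}_{h}^{n}\|^{2} + \|p_{h}^{n}\|^{2}\bigr).
\end{equation*}
Then the discrete Gronwall lemma yields the desired bound on $\|\mathbf{E}_{h}^{m}\|^{2}+\|\mathbf{H}_{h}^{m}\|^{2}+\|p_{h}^{m}\|^{2}+\tau\|\nabla p_{h}^{m}\|^{2}$. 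Finally, for $\mathbf{u}_{h}^{n}$, testing the first equation of \eqref{e3.5.3} with $\mathbf{v}_{h}=\mathbf{u}_{h}^{n}$ and using the coercivity of $a(\cdot,\cdot)$ gives $\beta_{1}\|\mathbf{u}_{h}^{n}\|_{1}^{2} \leq |(p_{h}^{n},\alpha\nabla\cdot\mathbf{u}_{h}^{n})| \leq C\|p_{h}^{n}\|\|\mathbf{u}_{h}^{n}\|_{1}$, so $\|\mathbf{u}_{h}^{n}\|_{1}\leq C\|p_{h}^{n}\|\leq C$, completing the estimate. The main technical obstacle is precisely the two-time-level cross coupling, and I expect (H1) to enter in exactly the way needed to make the arithmetic-geometric balance work.
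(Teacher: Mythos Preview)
Your proposal is correct and follows essentially the same energy argument as the paper's proof: test \eqref{e3.5.2} with $(\mathbf{E}_{h}^{n},\mathbf{H}_{h}^{n})$ and \eqref{e3.5.4} with $p_{h}^{n}$, use the discrete identity $2\tau(\bar\partial_{t}v^{n},v^{n})\geq\|v^{n}\|^{2}-\|v^{n-1}\|^{2}$ (and its $(c_{0}+\mathcal{B}_{h})$-weighted analogue), absorb the two-level cross terms via Young and (H1), then apply the discrete Gronwall lemma and finish with coercivity of $a(\cdot,\cdot)$ for $\mathbf{u}_{h}^{n}$. The only cosmetic differences are your Young weights $(\sigma,L^{2}/\sigma)$ versus the paper's $(\sigma/2,\kappa/2)$, and that you sum first while the paper writes the one-step telescoping inequality and then sums; both yield the same conclusion. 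One small slip: the lower bound you need on the left is $((c_{0}+\mathcal{B}_{h})p_{h}^{m},p_{h}^{m})\geq c_{0}\|p_{h}^{m}\|^{2}$ (monotonicity of $\mathcal{B}_{h}$), not the norm inequality $\|(c_{0}+\mathcal{B}_{h})p_{h}^{m}\|^{2}\geq c_{0}^{2}\|p_{h}^{m}\|^{2}$ you quoted.
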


\begin{proof}
	Taking $(\mathbf{D}_{h},\mathbf{B}_{h})=(\mathbf{E}_{h}^{n},\mathbf{H}_{h}^{n})$ in \eqref{e3.5.2} and $q_{h}=p_{h}^{n}$ in \eqref{e3.5.4}, respectively, we obtain
	\begin{equation}\label{e3.5.5}
		\begin{aligned}
			& (\epsilon \bar\partial_{t}\mathbf{E}_{h}^{n},\mathbf{E}_{h}^{n}) \!+\! (\mu \bar\partial_{t}\mathbf{H}_{h}^{n},\mathbf{H}_{h}^{n}) \!+\! (\bar\partial_{t}(c_{0} \!+\! \mathcal{B}_{h}) p_{h}^{n},p_{h}^{n}) \!+\! \sigma \|\mathbf{E}_{h}^{n}\|^{2} \!+\! \kappa \|\nabla p_{h}^{n}\|^{2} 
			\\
			&\quad - (L \nabla p_{h}^{n-1},\mathbf{E}_{h}^{n}) - (L \mathbf{E}_{h}^{n},\nabla p_{h}^{n})
			= (\mathbf{j}(t_{n}),\mathbf{E}_{h}^{n}) + (g(t_{n}),p_{h}^{n}).
		\end{aligned}
	\end{equation}
	Direct computation gives
	\begin{equation}\label{e3.5.6}
		\begin{aligned}
			\tau(\bar\partial_{t}\mathbf{E}_{h}^{n},\mathbf{E}_{h}^{n}) =&\ \frac{\tau^{2}}{2}(\bar\partial_{t}\mathbf{E}_{h}^{n},\bar\partial_{t}\mathbf{E}_{h}^{n}) + \frac{1}{2}(\mathbf{E}_{h}^{n},\mathbf{E}_{h}^{n}) - \frac{1}{2}(\mathbf{E}_{h}^{n-1},\mathbf{E}_{h}^{n-1})
			\\
			\geq&\ \frac{1}{2}\|\mathbf{E}_{h}^{n}\|^{2} - \frac{1}{2}\|\mathbf{E}_{h}^{n-1}\|^{2}.
		\end{aligned}
	\end{equation}
	Similarly, we get $\tau(\bar\partial_{t}\mathbf{H}_{h}^{n},\mathbf{H}_{h}^{n})\geq \frac{1}{2}\|\mathbf{H}_{h}^{n}\|^{2} - \frac{1}{2}\|\mathbf{H}_{h}^{n-1}\|^{2}$. 
	By virtue of the monotonicity and self-adjointness of operator $\mathcal{B}_{h}$, we have
	\begin{align*}
		\tau(\bar\partial_{t}(c_{0}+\mathcal{B}_{h}) p_{h}^{n},p_{h}^{n}) =&\ \frac{\tau^{2}}{2}((c_{0}+\mathcal{B}_{h})\bar\partial_{t}p_{h}^{n},\bar\partial_{t}p_{h}^{n}) + \frac{1}{2}((c_{0}+\mathcal{B}_{h}) p_{h}^{n},p_{h}^{n})
		\\
		&\ - \frac{1}{2}((c_{0}+\mathcal{B}_{h}) p_{h}^{n-1},p_{h}^{n-1})
		\\
		\geq&\ \frac{1}{2}((c_{0}+\mathcal{B}_{h}) p_{h}^{n},p_{h}^{n}) - \frac{1}{2}((c_{0}+\mathcal{B}_{h}) p_{h}^{n-1},p_{h}^{n-1}).
	\end{align*}
	From assumption (H1), it follows that
	\begin{align*}
		(L \nabla p_{h}^{n-1},\mathbf{E}_{h}^{n}) <&\ (\sqrt{\sigma\kappa} \nabla p_{h}^{n-1},\mathbf{E}_{h}^{n}) 
		\leq \frac{\kappa}{2}\|\nabla p_{h}^{n-1}\|^{2} + \frac{\sigma}{2}\|\mathbf{E}_{h}^{n}\|^{2},
		\\
		(L \mathbf{E}_{h}^{n},\nabla p_{h}^{n}) <&\ (\sqrt{\sigma\kappa} \mathbf{E}_{h}^{n},\nabla p_{h}^{n}) 
		\leq \frac{\sigma}{2}\|\mathbf{E}_{h}^{n}\|^{2} + \frac{\kappa}{2}\|\nabla p_{h}^{n}\|^{2}.
	\end{align*}
	Noticing \eqref{e3.5.5}, we get
	\begin{align*}
		&\ \epsilon\|\mathbf{E}_{h}^{n}\|^{2} - \epsilon\|\mathbf{E}_{h}^{n-1}\|^{2} + \mu\|\mathbf{H}_{h}^{n}\|^{2} - \mu\|\mathbf{H}_{h}^{n-1}\|^{2} + ((c_{0}+\mathcal{B}_{h}) p_{h}^{n},p_{h}^{n}) 
		\\
		&\quad - ((c_{0}+\mathcal{B}_{h}) p_{h}^{n-1},p_{h}^{n-1}) + 2\sigma \tau \|\mathbf{E}_{h}^{n}\|^{2} + 2\kappa \tau \|\nabla p_{h}^{n}\|^{2} 
		\\
		&\quad - \kappa \tau \|\nabla p_{h}^{n-1}\|^{2} - \sigma \tau \|\mathbf{E}_{h}^{n}\|^{2} 
		- \sigma \tau \|\mathbf{E}_{h}^{n}\|^{2} - \kappa \tau \|\nabla p_{h}^{n}\|^{2}
		\\
		\leq&\ \tau\|\mathbf{j}(t_{n})\|^{2} + \tau\|\mathbf{E}_{h}^{n}\|^{2} + \tau\|g(t_{n})\|^{2} + \tau\|p_{h}^{n}\|^{2}.
	\end{align*}
	Inductively for $n\geq 1$ in above inequality, we obtain
	\begin{align*}
		&\ \|\mathbf{E}_{h}^{n}\|^{2} + \|\mathbf{H}_{h}^{n}\|^{2} + \|p_{h}^{n}\|^{2} + \tau\|\nabla p_{h}^{n}\|^{2}
		\\
		\leq&\ C(\|\mathbf{E}_{h}^{0}\|^{2} \!+\! \|\mathbf{H}_{h}^{0}\|^{2} \!+\! \|p_{h}^{0}\|^{2} \!+\! \tau\|\nabla p_{h}^{0}\|^{2}) + C\sum_{i=1}^{n}\tau\|\mathbf{j}(t_{i})\|^{2} + C\sum_{i=1}^{n}\tau\|g(t_{i})\|^{2}
		\\
		&\ + C\sum_{i=1}^{n}\tau(\|\mathbf{E}_{h}^{i}\|^{2} + \|p_{h}^{i}\|^{2}).
	\end{align*}
	Eliminating the item $\tau\|\nabla p_{h}^{n}\|^{2}$ in above inequality and then applying discrete Gronwall's lemma, we have
	\begin{equation*}
		\|\mathbf{E}_{h}^{n}\|^{2} + \|\mathbf{H}_{h}^{n}\|^{2} + \|p_{h}^{n}\|^{2}
		\leq C e^{C\sum_{i=1}^{n}\tau}
		\leq C(T).
	\end{equation*}
	This directly leads to $\tau\|\nabla p_{h}^{n}\|^{2} \leq C$.
	Then, applying \eqref{e3.5.3} and the coerciveness of $a(\cdot,\cdot)$ yields
	\begin{equation*}
		\beta_{1} \|\mathbf{u}_{h}^{n}\|_{1}^{2} \leq |a(\mathbf{u}_{h}^{n},\mathbf{u}_{h}^{n})| = |(p_{h}^{n}, \alpha \nabla\cdot \mathbf{u}_{h}^{n})| \leq C \|p_{h}^{n}\|\|\mathbf{u}_{h}^{n}\|_{1}.
	\end{equation*}
	Thus, we complete the proof.
\end{proof}

\subsection{Error estimates}\label{s3.5.2}
In this part, we derive the error estimates for the splitting numerical approximation to \eqref{e3.2.4} and investigate its convergence order. We will set up an error equation according to \eqref{e3.4.5} and \eqref{e3.5.2} together with \eqref{e3.4.6} and \eqref{e3.5.3} without using the operator $\mathcal{B}_{h}$ and analyze the error estimation. 
Now, we introduce some operators onto finite element spaces. 
Define three sets of moments for $\mathbf{w}\in \bm{H}^{2}$ on $K\in \mathcal{T}_{h}$ \cite{Mo1991,Mo2003,Ne1980}
\begin{align*}
	M_{e_{K}}(\mathbf{w}) =&\ \left\{ \int_{e_{K}}{(\mathbf{w}\cdot\boldsymbol{\tau}_{e_{K}})\chi}{\, \mathrm{d}e_{K}}, \ \forall \, \chi\in \mathbb{P}_{k-1}(e_{K}), \, e_{K}\in \mathcal{E}_{K} \right\},
	\\
	M_{f_{K}}(\mathbf{w}) =&\ \left\{ \int_{f_{K}}{(\mathbf{w}\times\mathbf{n})\cdot\boldsymbol{\chi}}{\, \mathrm{d}f_{K}}, \ \forall \, \boldsymbol{\chi}\in (\mathbb{P}_{k-2}(f_{K}))^{2}, \, f_{K}\in \mathcal{F}_{K} \right\},
	\\
	M_{K}(\mathbf{w}) =&\ \left\{ \int_{K}{\mathbf{w}\cdot\boldsymbol{\chi}}{\, \mathrm{d}K}, \ \forall \, \boldsymbol{\chi}\in (\mathbb{P}_{k-3}(K))^{3} \right\},
\end{align*}
where $\boldsymbol{\tau}_{e_{K}}$ denotes a unit vector parallel to $e_{K}$. N\'{e}d\'{e}lec \cite{Ne1980} proved that these sets are unisolvent on $\mathbb{R}_{k}$ and conforming in $\bm{H}(\mathbf{curl})$. 
Denote by $\mathcal{I}_{h}: \bm{H}^{2}\to \mathbb{E}_{h}$ the interpolation operator such that $(\mathcal{I}_{h}\mathbf{E})|_{K}$ is the unique polynomial in $\mathbb{R}_{k}(K)$ having the same moments as $\mathbf{E}|_{K}$ \cite{GiRa1986,Mo1991,Ne1980}. 
Define two bilinear forms: for any $\mathbf{v}\in \bm{H}^{1}_{0}$ and $\chi,q \in H^{1}_{0}$,
\begin{equation*}
	b(\mathbf{v},\chi) = (\chi,\alpha \nabla\cdot \mathbf{v}),
	\quad
	c(\chi,q) = (\kappa \nabla\chi, \nabla q).
\end{equation*}
We also define two Ritz projection operators $\mathcal{Q}_{h}:\bm{H}^{1}_{0}\rightarrow \mathbb{U}_{h}$ and $\mathcal{R}_{h}:H^{1}_{0}\rightarrow \mathbb{P}_{h}$ satisfying for any $\mathbf{u}\in \bm{H}^{1}_{0}$ and $p\in H^{1}_{0}$,
\begin{align}
	\label{e3.5.7}
	a(\mathcal{Q}_{h}\mathbf{u},\mathbf{v}_{h})-b(\mathbf{v}_{h},\mathcal{R}_{h}p) =&\ a(\mathbf{u},\mathbf{v}_{h})-b(\mathbf{v}_{h},p), \quad \forall \, \mathbf{v}_{h}\in \mathbb{U}_{h},
	\\
	\label{e3.5.8}
	c(\mathcal{R}_{h}p,q_{h}) =&\ c(p,q_{h}), \quad \forall \, q_{h}\in \mathbb{P}_{h}.
\end{align}

Define a norm by $\|\cdot\|_{a}=\sqrt{a(\cdot,\cdot)}$ and it is equivalent to the norm $\|\cdot\|_{1}$, which will be used in forthcoming error estimates.

\begin{lemma}\label{l3.5.2}
	For any $\mathbf{E}\in \bm{H}^{2}$, $\mathbf{u}\in \bm{H}^{2}\cap \bm{H}^{1}_{0}$ and $p\in H^{2}\cap H^{1}_{0}$, there hold
	\begin{align*}
		\|\mathcal{I}_{h}\mathbf{E}-\mathbf{E}\| + \|\nabla\times (\mathcal{I}_{h}\mathbf{E}-\mathbf{E})\| \leq&\ Ch\|\mathbf{E}\|_{2},
		\\
		\|\mathcal{R}_{h}p-p\| + h \|\nabla (\mathcal{R}_{h}p-p)\| \leq&\ Ch^{2}\|p\|_{2},
		\\
		\|\mathcal{Q}_{h}\mathbf{u}-\mathbf{u}\|_{1} \leq&\ Ch\|\mathbf{u}\|_{2} + Ch^{2}\|p\|_{2},
	\end{align*}
	where $C>0$ is a constant independent of $h$.
\end{lemma}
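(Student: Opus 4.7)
The plan is to prove the three estimates separately, since they concern three genuinely different operators (the Nédélec interpolant, the $H^1$-Ritz projection onto $\mathbb{P}_h$, and the coupled elasticity-pressure Ritz projection). The first two are classical and I would quote the underlying approximation theory; the third is what really needs attention because $\mathcal{Q}_h$ is defined through a system that couples it to $\mathcal{R}_h$, and this coupling must be traced carefully.

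For the estimate on $\mathcal{I}_h \mathbf{E} - \mathbf{E}$, I would invoke the standard theory of N\'ed\'elec edge-element interpolation. Since the moments $M_{e_K}$, $M_{f_K}$, $M_K$ are unisolvent on $\mathbb{R}_k(K)$ and preserve constants (for $k=1$, $\mathbb{R}_1(K)\supset (\mathbb{P}_0(K))^3$), a Bramble--Hilbert argument on the reference tetrahedron combined with a Piola-type scaling gives $\|\mathcal{I}_h\mathbf{E}-\mathbf{E}\|_{L^2(K)} + h_K\|\nabla\times(\mathcal{I}_h\mathbf{E}-\mathbf{E})\|_{L^2(K)} \leq C h_K^2 \|\mathbf{E}\|_{H^2(K)}$; summing over $K\in\mathcal{T}_h$ and dividing through yields the claimed $O(h)$ bound in the $\bm{H}(\mathbf{curl})$-norm. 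This is exactly the estimate proved in N\'ed\'elec's original paper and reproduced in Monk's monograph, which the authors cite.

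For the estimate on $\mathcal{R}_h p - p$, note that \eqref{e3.5.8} is the standard Galerkin projection for the Laplace operator with form $c(\cdot,\cdot)$, which is symmetric and $H^1_0$-coercive. C\'ea's lemma together with \eqref{e3.5.1} gives $\|\nabla(\mathcal{R}_h p - p)\| \leq C\inf_{q_h\in\mathbb{P}_h}\|\nabla(p-q_h)\| \leq Ch\|p\|_2$. For the $L^2$-bound I would run the Aubin--Nitsche duality argument: let $\varphi\in H^2\cap H^1_0$ solve $-\kappa\Delta\varphi = \mathcal{R}_h p - p$ with $\|\varphi\|_2\leq C\|\mathcal{R}_h p - p\|$ (elliptic regularity on the convex $\Omega$), then
\begin{equation*}
\|\mathcal{R}_h p - p\|^2 = c(\mathcal{R}_h p - p, \varphi) = c(\mathcal{R}_h p - p, \varphi - \mathcal{P}_h\varphi) \leq Ch\|\nabla(\mathcal{R}_h p - p)\|\cdot\|\varphi\|_2,
\end{equation*}
which delivers the $O(h^2)$ bound.

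For the estimate on $\mathcal{Q}_h\mathbf{u} - \mathbf{u}$, which is the main obstacle because \eqref{e3.5.7} couples it to $\mathcal{R}_h p - p$, I would subtract to obtain the error equation
\begin{equation*}
a(\mathcal{Q}_h\mathbf{u} - \mathbf{u}, \mathbf{v}_h) = b(\mathbf{v}_h, \mathcal{R}_h p - p), \quad \forall\,\mathbf{v}_h\in\mathbb{U}_h.
\end{equation*}
Split $\mathcal{Q}_h\mathbf{u} - \mathbf{u} = (\mathcal{Q}_h\mathbf{u} - \mathcal{P}_h\mathbf{u}) + (\mathcal{P}_h\mathbf{u} - \mathbf{u})$, where $\mathcal{P}_h\mathbf{u}$ is the Lagrange interpolant from \eqref{e3.5.1}. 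Taking $\mathbf{v}_h = \mathcal{Q}_h\mathbf{u} - \mathcal{P}_h\mathbf{u}$, using the coercivity and continuity of $a(\cdot,\cdot)$ and the bound $|b(\mathbf{v}_h, \mathcal{R}_h p - p)| \leq C\|\mathcal{R}_h p - p\|\,\|\mathbf{v}_h\|_1$,
\begin{equation*}
\beta_1\|\mathcal{Q}_h\mathbf{u} - \mathcal{P}_h\mathbf{u}\|_1 \leq \beta_2\|\mathbf{u} - \mathcal{P}_h\mathbf{u}\|_1 + C\|\mathcal{R}_h p - p\| \leq Ch\|\mathbf{u}\|_2 + Ch^2\|p\|_2,
\end{equation*}
and a triangle inequality with \eqref{e3.5.1} finishes the proof. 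The only subtle point is confirming that the off-diagonal term $b(\mathbf{v}_h,\mathcal{R}_h p - p)$ can be absorbed using only the $L^2$-bound on $\mathcal{R}_h p - p$ (rather than an $H^1$-bound), which follows because $b(\mathbf{v}_h,\chi) = (\chi,\alpha\nabla\cdot\mathbf{v}_h)$ pairs the pressure error with $\nabla\cdot\mathbf{v}_h$ in $L^2$; this is exactly why the second (superconvergent) estimate on $\mathcal{R}_h p - p$ is crucial and must be established first.
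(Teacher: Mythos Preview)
Your proposal is correct and follows essentially the same approach as the paper: the first two estimates are cited as classical (N\'ed\'elec interpolation and standard Ritz-projection theory with Aubin--Nitsche), and the third is obtained by testing the error equation $a(\mathcal{Q}_h\mathbf{u}-\mathbf{u},\mathbf{v}_h)=b(\mathbf{v}_h,\mathcal{R}_hp-p)$ with $\mathbf{v}_h=\mathcal{Q}_h\mathbf{u}-\mathcal{P}_h\mathbf{u}$, exactly as the authors do. One minor slip: in the paper $\mathcal{P}_h$ is the $L^2$-orthogonal projection rather than the Lagrange interpolant, but this does not affect the argument since the approximation bounds \eqref{e3.5.1} are all that is used.
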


\begin{proof}
	The first and second inequalities are two classical estimates, cf.\ \cite[Theorem 2]{Ne1980} and \cite[Lemma 1.1]{Th2006}, respectively. 
	From \eqref{e3.5.7}, it follows that
	\begin{equation*}
		a(\mathcal{Q}_{h}\mathbf{u}-\mathcal{P}_{h}\mathbf{u},\mathbf{v}_{h}) = a(u-\mathcal{P}_{h}\mathbf{u},\mathbf{v}_{h}) + b(\mathbf{v}_{h},\mathcal{R}_{h}p-p), \quad \forall \, \mathbf{v}_{h}\in \mathbb{U}_{h}.
	\end{equation*}
	Taking $\mathbf{v}_{h} = \mathcal{Q}_{h}\mathbf{u}-\mathcal{P}_{h}\mathbf{u} \in \mathbb{U}_{h}$ in above equation yields
	\begin{align*}
		&\ \beta_{1} \|\mathcal{Q}_{h}\mathbf{u}-\mathcal{P}_{h}\mathbf{u}\|_{1}^{2} 
           \\
        \leq&\ |a(\mathcal{Q}_{h}\mathbf{u}-\mathcal{P}_{h}\mathbf{u},\mathcal{Q}_{h}\mathbf{u}-\mathcal{P}_{h}\mathbf{u})|
		\\
		\leq&\ |a(u-\mathcal{P}_{h}\mathbf{u},\mathcal{Q}_{h}\mathbf{u}-\mathcal{P}_{h}\mathbf{u})| + |b(\mathcal{Q}_{h}\mathbf{u}-\mathcal{P}_{h}\mathbf{u},\mathcal{R}_{h}p-p)|
		\\
		\leq&\ C\|\mathbf{u}-\mathcal{P}_{h}\mathbf{u}\|_{1} \|\mathcal{Q}_{h}\mathbf{u}-\mathcal{P}_{h}\mathbf{u}\|_{1} + C\|\mathcal{Q}_{h}\mathbf{u}-\mathcal{P}_{h}\mathbf{u}\|_{1} \|\mathcal{R}_{h}p-p\|,
	\end{align*}
	from which we get
	\begin{equation*}
		\|\mathcal{Q}_{h}\mathbf{u}-\mathcal{P}_{h}\mathbf{u}\|_{1} \leq C\|\mathbf{u}-\mathcal{P}_{h}\mathbf{u}\|_{1} + C\|\mathcal{R}_{h}p-p\| \leq Ch\|\mathbf{u}\|_{2} + Ch^{2}\|p\|_{2}.
	\end{equation*}
	Then, we have
	\begin{equation*}
		\|\mathcal{Q}_{h}\mathbf{u}-\mathbf{u}\|_{1} \leq \|\mathcal{Q}_{h}\mathbf{u}-\mathcal{P}_{h}\mathbf{u}\|_{1} + \|\mathcal{P}_{h}\mathbf{u}-\mathbf{u}\|_{1} \leq Ch\|\mathbf{u}\|_{2} + Ch^{2}\|p\|_{2}.
	\end{equation*}
	Thus, the proof is completed.
\end{proof}

Now, we derive the error estimates for the splitting numerical approximation to \eqref{e3.2.4}.

\begin{theorem}\label{t3.5.3}
	Assume (H1) and (H2$^{\prime}$). Let $(\mathbf{E},\mathbf{H},\mathbf{u},p)$ and $(\mathbf{E}_{h}^{n},\mathbf{H}_{h}^{n},\mathbf{u}_{h}^{n},p_{h}^{n})$ $(0\leq n\leq N)$ be the exact solution and splitting numerical approximation to \eqref{e3.2.4}, respectively. Then there exists a constant $C>0$ independent of both $\tau$ and $h$ such that
	\begin{equation*}
		\|\mathbf{E}(t_{n})-\mathbf{E}_{h}^{n}\| + \|\mathbf{H}(t_{n})-\mathbf{H}_{h}^{n}\| + \|\mathbf{u}(t_{n})-\mathbf{u}_{h}^{n}\|_{1} + \|p(t_{n})-p_{h}^{n}\|  \leq C(\tau+h).
	\end{equation*}
\end{theorem}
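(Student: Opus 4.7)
The plan is to split the total error into a time-splitting part and a finite-element/backward-Euler part via the triangle inequality
\[
\|\mathbf{E}(t_n)-\mathbf{E}_h^n\|\le \|\mathbf{E}(t_n)-\tilde{\mathbf{E}}^n\|+\|\tilde{\mathbf{E}}^n-\mathbf{E}_h^n\|,
\]
and analogously for $\mathbf{H}$, $\mathbf{u}$ and $p$. The first piece is already $\mathcal{O}(\tau)$ by Theorem \ref{t3.4.7}, so the whole task reduces to proving $\|\tilde{\mathbf{E}}^n-\mathbf{E}_h^n\|+\|\tilde{\mathbf{H}}^n-\mathbf{H}_h^n\|+\|\tilde{\mathbf{u}}^n-\mathbf{u}_h^n\|_1+\|\tilde{p}^n-p_h^n\|\le C(\tau+h)$. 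Recalling that $\tilde{\mathbf{E}}^n=\mathbf{E}_{1\tau}(t_n)$, $\tilde{\mathbf{H}}^n=\mathbf{H}_{1\tau}(t_n)$, $\tilde{\mathbf{u}}^n=\mathbf{u}_{2\tau}(t_n)$ and $\tilde{p}^n=p_{2\tau}(t_n)$, this amounts to comparing the semi-discrete splitting solution of \eqref{e3.4.5}--\eqref{e3.4.6} at $t_n$ with the fully discrete solution of \eqref{e3.5.2}--\eqref{e3.5.3}.

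For that comparison I would insert the projections of Lemma \ref{l3.5.2} and write
\[
\mathbf{E}_{1\tau}(t_n)-\mathbf{E}_h^n=\bigl(\mathbf{E}_{1\tau}(t_n)-\mathcal{I}_h\mathbf{E}_{1\tau}(t_n)\bigr)+\theta_{\mathbf{E}}^n,\qquad \theta_{\mathbf{E}}^n:=\mathcal{I}_h\mathbf{E}_{1\tau}(t_n)-\mathbf{E}_h^n,
\]
and analogously $\theta_{\mathbf{H}}^n$ (with $\mathcal{P}_h$), $\theta_{\mathbf{u}}^n$ (with $\mathcal{Q}_h$) and $\theta_p^n$ (with $\mathcal{R}_h$). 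The projection errors $\mathbf{E}_{1\tau}(t_n)-\mathcal{I}_h\mathbf{E}_{1\tau}(t_n)$ etc.\ are immediately controlled by Lemma \ref{l3.5.2} together with the uniform $H^2$/$H^4$ regularity of $(\mathbf{E}_{1\tau},\mathbf{H}_{1\tau},\mathbf{u}_{2\tau},p_{2\tau})$ provided by Lemma \ref{l3.4.5}, yielding an $\mathcal{O}(h)$ bound in the required norms. The remaining work is to estimate the discrete errors $\theta^n$.

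For the $\theta^n$'s I would derive a discrete error system by subtracting \eqref{e3.5.2}--\eqref{e3.5.3} from \eqref{e3.4.5}, \eqref{e3.4.6} tested with the same discrete trial functions and evaluated/integrated in time consistently with backward Euler. This produces consistency residuals of two types: (i) a time-truncation term $\bar\partial_t\mathbf{E}_{1\tau}(t_n)-\tfrac{\partial}{\partial t}\mathbf{E}_{1\tau}(t_n)$ together with its $\mathbf{H}$ and $p$ counterparts, which by Taylor's formula and Lemma \ref{l3.4.5} are bounded in $L^2(\Omega)$ by $C\tau$; and (ii) projection residuals $\mathcal{I}_h\mathbf{E}_{1\tau}(t_n)-\mathbf{E}_{1\tau}(t_n)$ and $\mathcal{R}_h p_{2\tau}(t_n)-p_{2\tau}(t_n)$ appearing through the time derivative and coupling terms, which by Lemma \ref{l3.5.2} are $\mathcal{O}(h)$. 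Crucially, the definitions \eqref{e3.5.7}--\eqref{e3.5.8} of $\mathcal{Q}_h$ and $\mathcal{R}_h$ are chosen so that the stationary bilinear forms $a(\cdot,\cdot)$, $b(\cdot,\cdot)$, $c(\cdot,\cdot)$ vanish identically on $(\mathbf{u}-\mathcal{Q}_h\mathbf{u},p-\mathcal{R}_h p)$ tested against $\mathbb{U}_h\times\mathbb{P}_h$, so these do not contaminate the estimate.

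I would then test the error equation with $(\theta_{\mathbf{E}}^n,\theta_{\mathbf{H}}^n,\theta_p^n)$, mimicking the stability argument of Lemma \ref{l3.5.1}: use the identity $\tau(\bar\partial_t v^n,v^n)\ge\tfrac{1}{2}(\|v^n\|^2-\|v^{n-1}\|^2)$, absorb the cross term $(L\nabla\theta_p^{n-1},\theta_{\mathbf{E}}^n)+(L\theta_{\mathbf{E}}^n,\nabla\theta_p^n)$ into $\sigma\|\theta_{\mathbf{E}}^n\|^2+\kappa\|\nabla\theta_p^n\|^2$ thanks to hypothesis (H1), and then sum from $i=1$ to $n$. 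The consistency residuals contribute $C\tau\sum_i\tau(\tau^2+h^2)\le C(\tau^2+h^2)$, so discrete Gronwall produces $\|\theta_{\mathbf{E}}^n\|^2+\|\theta_{\mathbf{H}}^n\|^2+\|\theta_p^n\|^2\le C(\tau^2+h^2)$. The bound on $\|\theta_{\mathbf{u}}^n\|_1$ then follows from the first equation of \eqref{e3.5.3} and the coercivity of $a(\cdot,\cdot)$, exactly as in Lemma \ref{l3.5.1}. The main obstacle I anticipate is the proper treatment of the coupling terms involving $L\nabla p_h^{n-1}$ and $L\mathbf{E}_h^n$ across the two subproblems, as these couple the $\theta_{\mathbf{E}}^n$ and $\theta_p^n$ estimates and require the sharp condition $L<\sqrt{\sigma\kappa}$ to close the energy estimate uniformly in $\tau$ and $h$.
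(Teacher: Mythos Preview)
Your proposal is correct and follows essentially the same route as the paper: split off the $\mathcal{O}(\tau)$ piece via Theorem~\ref{t3.4.7}, decompose the remaining error through the projections $\mathcal{I}_h,\mathcal{P}_h,\mathcal{Q}_h,\mathcal{R}_h$ of Lemma~\ref{l3.5.2}, derive the discrete error system from \eqref{e3.4.5}--\eqref{e3.4.6} versus \eqref{e3.5.2}--\eqref{e3.5.3}, test with the discrete errors, use (H1) to absorb the $L$-coupling, and close with discrete Gronwall. The only cosmetic difference is that the paper tests the $\mathbf{u}$-error identity $a(\boldsymbol{\gamma}_h^n,\mathbf{v}_h)=b(\mathbf{v}_h,\eta_h^n)$ with $\mathbf{v}_h=\bar\partial_t\boldsymbol{\gamma}_h^n$ so that the term $b(\bar\partial_t\boldsymbol{\gamma}_h^n,\eta_h^n)$ in the $p$-error equation becomes $a(\bar\partial_t\boldsymbol{\gamma}_h^n,\boldsymbol{\gamma}_h^n)$ and telescopes; your plan to ``mimic Lemma~\ref{l3.5.1}'' and recover $\|\theta_{\mathbf u}^n\|_1$ afterwards is equivalent (since $b(\bar\partial_t\boldsymbol{\gamma}_h^n,\eta_h^n)=(\bar\partial_t\mathcal{B}_h\eta_h^n,\eta_h^n)$), but be aware that your claim that $b(\cdot,\cdot)$ vanishes on the projection error is not quite right---the residual $b(\bar\partial_t(\mathbf{u}_{2\tau}-\mathcal{Q}_h\mathbf{u}_{2\tau}),q_h)$ survives and is precisely the paper's $\omega_7^n$ term, controlled by Lemma~\ref{l3.5.2}.
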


\begin{proof}
	Define four error functions by 
	\begin{align*}
		\mathbf{E}_{1\tau}(t_{n})-\mathbf{E}_{h}^{n} =&\ (\mathbf{E}_{1\tau}(t_{n})-\mathcal{I}_{h}\mathbf{E}_{1\tau}(t_{n})) + (\mathcal{I}_{h}\mathbf{E}_{1\tau}(t_{n})-\mathbf{E}_{h}^{n}) := \boldsymbol{\rho}(t_{n}) + \boldsymbol{\xi}_{h}^{n}, 
		\\
		\mathbf{H}_{1\tau}(t_{n})-\mathbf{H}_{h}^{n} =&\ (\mathbf{H}_{1\tau}(t_{n})\!-\!\mathcal{P}_{h}\mathbf{H}_{1\tau}(t_{n})) \!+\! (\mathcal{P}_{h}\mathbf{H}_{1\tau}(t_{n})\!-\mathbf{H}_{h}^{n}) := \boldsymbol{\phi}(t_{n}) + \boldsymbol{\psi}_{h}^{n}, 
		\\
		\mathbf{u}_{2\tau}(t_{n})-\mathbf{u}_{h}^{n} =&\ (\mathbf{u}_{2\tau}(t_{n})-\mathcal{Q}_{h}\mathbf{u}_{2\tau}(t_{n})) + (\mathcal{Q}_{h}\mathbf{u}_{2\tau}(t_{n})-\mathbf{u}_{h}^{n}) := \boldsymbol{\pi}(t_{n}) + \boldsymbol{\gamma}_{h}^{n}, 
		\\
		p_{2\tau}(t_{n})-p_{h}^{n} =&\ (p_{2\tau}(t_{n})-\mathcal{R}_{h}p_{2\tau}(t_{n})) + (\mathcal{R}_{h}p_{2\tau}(t_{n})-p_{h}^{n}) := \zeta(t_{n}) + \eta_{h}^{n}.
	\end{align*}
	By virtue of \eqref{e3.5.1} and Lemmas \ref{l3.4.5}, \ref{l3.5.2}, we arrive at
	\begin{equation}\label{e3.5.9}
		\begin{aligned}
			\|\boldsymbol{\rho}(t_{n})\| =&\ \|\mathbf{E}_{1\tau}(t_{n})-\mathcal{I}_{h}\mathbf{E}_{1\tau}(t_{n})\| 
			\leq Ch\|\mathbf{E}_{1\tau}(t_{n})\|_{2} \leq Ch, 
			\\
			\|\boldsymbol{\phi}(t_{n})\| =&\ \|\mathbf{H}_{1\tau}(t_{n})-\mathcal{P}_{h}\mathbf{H}_{1\tau}(t_{n})\| 
			\leq Ch\|\mathbf{H}_{1\tau}(t_{n})\|_{1} \leq Ch, 
			\\
			\|\boldsymbol{\pi}(t_{n})\|_{1} =&\ \|\mathbf{u}_{2\tau}(t_{n})-\mathcal{Q}_{h}\mathbf{u}_{2\tau}(t_{n})\|_{1} 
			\leq Ch\|\mathbf{u}_{2\tau}(t_{n})\|_{2} + Ch^{2}\|p_{2\tau}(t_{n})\|_{2}
			\\
			\leq&\ Ch \big( \|\mathbf{u}_{0}\|_{2} + \int_{0}^{t_{n}}{\|\frac{\partial}{\partial t} \mathbf{u}_{2\tau}(\theta)\|_{3}}{\, \mathrm{d}\theta} \big) + Ch^{2}\|p_{2\tau}(t_{n})\|_{2} \leq Ch, 
			\\
			\|\zeta(t_{n})\| =&\ \|p_{2\tau}(t_{n})-\mathcal{R}_{h}p_{2\tau}(t_{n})\| 
			\leq Ch^{2}\|p_{2\tau}(t_{n})\|_{2} \leq Ch^{2}.
		\end{aligned}
	\end{equation}
	According to the first equations in \eqref{e3.4.5} and \eqref{e3.5.2}, we have that for any $\mathbf{D}_{h}\in \mathbb{E}_{h}$,
	\begin{align*}
		&\ (\epsilon\bar\partial_{t}\boldsymbol{\xi}_{h}^{n},\mathbf{D}_{h}) = (\epsilon\bar\partial_{t}(\mathcal{I}_{h}\mathbf{E}_{1\tau}(t_{n})-\mathbf{E}_{h}^{n}),\mathbf{D}_{h}) 
		\\
		=&\ \big( \epsilon(\mathcal{I}_{h}\bar\partial_{t}\mathbf{E}_{1\tau}(t_{n})-\bar\partial_{t}\mathbf{E}_{1\tau}(t_{n})),\mathbf{D}_{h} \big) 
		+ \big( \epsilon(\bar\partial_{t}\mathbf{E}_{1\tau}(t_{n})-\frac{\partial}{\partial t}\mathbf{E}_{1\tau}(t_{n})),\mathbf{D}_{h}\big)
		\\
		&\ + \big( \epsilon\frac{\partial}{\partial t}\mathbf{E}_{1\tau}(t_{n}),\mathbf{D}_{h}\big) 
		- (\epsilon\bar\partial_{t}\mathbf{E}_{h}^{n},\mathbf{D}_{h})
		\\
		=&\ \big( \epsilon(\mathcal{I}_{h}\bar\partial_{t}\mathbf{E}_{1\tau}(t_{n})-\bar\partial_{t}\mathbf{E}_{1\tau}(t_{n})),\mathbf{D}_{h} \big) 
		+ \big( \epsilon(\bar\partial_{t}\mathbf{E}_{1\tau}(t_{n})-\frac{\partial}{\partial t}\mathbf{E}_{1\tau}(t_{n})),\mathbf{D}_{h}\big)
		\\
		&\ - (\sigma \boldsymbol{\rho}(t_{n}),\mathbf{D}_{h}) - (\sigma \boldsymbol{\xi}_{h}^{n},\mathbf{D}_{h}) 
		+ (\boldsymbol{\phi}(t_{n}),\nabla\times \mathbf{D}_{h}) 
		\\
		&\ + (\boldsymbol{\psi}_{h}^{n},\nabla\times \mathbf{D}_{h}) 
		+ (L\nabla \zeta(t_{n-1}),\mathbf{D}_{h}) 
		+ (L\nabla \eta_{h}^{n-1},\mathbf{D}_{h}).
	\end{align*}
	Denote by $\boldsymbol{\omega}_{1}^{n}:= \mathcal{I}_{h}\bar\partial_{t}\mathbf{E}_{1\tau}(t_{n})-\bar\partial_{t}\mathbf{E}_{1\tau}(t_{n})$ and $\boldsymbol{\omega}_{2}^{n}:= \bar\partial_{t}\mathbf{E}_{1\tau}(t_{n})-\frac{\partial}{\partial t}\mathbf{E}_{1\tau}(t_{n})$. 
	From the definitions of the finite element spaces $\mathbb{E}_{h}$ and $\mathbb{H}_{h}$, it follows that $\nabla\times \mathbf{D}_{h}|_{K} \in \mathbb{H}_{h}|_{K}$. This together with $\mathcal{P}_{h}$ leads to $(\boldsymbol{\phi}(t_{n}),\nabla\times \mathbf{D}_{h}) = 0$. 
	By means of the second equations in \eqref{e3.4.5} and \eqref{e3.5.2}, we get that for any $\mathbf{B}_{h}\in \mathbb{H}_{h}$,
	\begin{align*}
		&\ (\mu\bar\partial_{t}\boldsymbol{\psi}_{h}^{n},\mathbf{B}_{h}) = (\mu\bar\partial_{t}(\mathcal{P}_{h}\mathbf{H}_{1\tau}(t_{n})-\mathbf{H}_{h}^{n}),\mathbf{B}_{h}) 
		\\
		=&\ \big( \mu(\mathcal{P}_{h}\bar\partial_{t}\mathbf{H}_{1\tau}(t_{n})-\bar\partial_{t}\mathbf{H}_{1\tau}(t_{n})), \mathbf{B}_{h} \big) 
		+ \big( \mu(\bar\partial_{t}\mathbf{H}_{1\tau}(t_{n})-\frac{\partial}{\partial t}\mathbf{H}_{1\tau}(t_{n})),\mathbf{B}_{h}\big)
		\\
		&\ + \big( \mu\frac{\partial}{\partial t}\mathbf{H}_{1\tau}(t_{n}),\mathbf{B}_{h}\big) 
		- (\mu\bar\partial_{t}\mathbf{H}_{h}^{n},\mathbf{B}_{h})
		\\
		=&\ \big( \mu(\mathcal{P}_{h}\bar\partial_{t}\mathbf{H}_{1\tau}(t_{n})-\bar\partial_{t}\mathbf{H}_{1\tau}(t_{n})), \mathbf{B}_{h} \big) 
		+ \big( \mu(\bar\partial_{t}\mathbf{H}_{1\tau}(t_{n})-\frac{\partial}{\partial t}\mathbf{H}_{1\tau}(t_{n})),\mathbf{B}_{h}\big)
		\\
		&\ - (\nabla\times \boldsymbol{\rho}(t_{n}),\mathbf{B}_{h}) - (\nabla\times \boldsymbol{\xi}_{h}^{n},\mathbf{B}_{h}).
	\end{align*}
	Denote by $\boldsymbol{\omega}_{3}^{n}:= \mathcal{P}_{h}\bar\partial_{t}\mathbf{H}_{1\tau}(t_{n})-\bar\partial_{t}\mathbf{H}_{1\tau}(t_{n})$ and $\boldsymbol{\omega}_{4}^{n}:= \bar\partial_{t}\mathbf{H}_{1\tau}(t_{n})-\frac{\partial}{\partial t}\mathbf{H}_{1\tau}(t_{n})$. 
	By \eqref{e3.4.6}, \eqref{e3.5.3} and \eqref{e3.5.7}, we obtain that for any $\mathbf{v}_{h}\in \mathbb{U}_{h}$,
	\begin{align*}
		&\ a(\boldsymbol{\gamma}_{h}^{n},\mathbf{v}_{h}) - b(\mathbf{v}_{h},\eta_{h}^{n})
		\\
		=&\ a(\mathcal{Q}_{h}\mathbf{u}_{2\tau}(t_{n}),\mathbf{v}_{h}) - a(\mathbf{u}_{h}^{n},\mathbf{v}_{h}) - b(\mathbf{v}_{h},\mathcal{R}_{h}p_{2\tau}(t_{n})) + b(\mathbf{v}_{h},p_{h}^{n})
		\\
		=&\ a(\mathbf{u}_{2\tau}(t_{n}),\mathbf{v}_{h}) - b(\mathbf{v}_{h},p_{2\tau}(t_{n})) - a(\mathbf{u}_{h}^{n},\mathbf{v}_{h}) + b(\mathbf{v}_{h},p_{h}^{n}) = 0.
	\end{align*}
	By virtue of \eqref{e3.4.6}, \eqref{e3.5.3} and \eqref{e3.5.8}, we have that for any $q_{h}\in \mathbb{P}_{h}$,
	\begin{align*}
		&\ (c_{0}\bar\partial_{t}\eta_{h}^{n},q_{h}) + b(\bar\partial_{t}\boldsymbol{\gamma}_{h}^{n},q_{h}) + c(\eta_{h}^{n},q_{h})
		\\
		=&\ (c_{0}\bar\partial_{t}\mathcal{R}_{h}p_{2\tau}(t_{n}),q_{h}) - (c_{0}\bar\partial_{t}p_{h}^{n},q_{h}) + b(\bar\partial_{t}\mathcal{Q}_{h}\mathbf{u}_{2\tau}(t_{n}),q_{h}) - b(\bar\partial_{t}\mathbf{u}_{h}^{n},q_{h})
		\\
		&\ + c(\mathcal{R}_{h}p_{2\tau}(t_{n}),q_{h}) -c(p_{h}^{n},q_{h})
		\\
		=&\ \big( c_{0}(\mathcal{R}_{h}\bar\partial_{t}p_{2\tau}(t_{n})-\bar\partial_{t}p_{2\tau}(t_{n})),q_{h} \big) + \big( c_{0}(\bar\partial_{t}p_{2\tau}(t_{n})-\frac{\partial}{\partial t}p_{2\tau}(t_{n})),q_{h} \big)
		\\
		&\ + \big( c_{0}\frac{\partial}{\partial t}p_{2\tau}(t_{n}),q_{h} \big) - (c_{0}\bar\partial_{t}p_{h}^{n},q_{h})
		\\
		&\ + \big( \alpha(\nabla\cdot \mathcal{Q}_{h}\bar\partial_{t}\mathbf{u}_{2\tau}(t_{n})-\nabla\cdot \bar\partial_{t}\mathbf{u}_{2\tau}(t_{n})),q_{h} \big)
		\\
		&\ + \big( \alpha(\nabla\cdot \bar\partial_{t}\mathbf{u}_{2\tau}(t_{n})-\nabla\cdot \frac{\partial}{\partial t}\mathbf{u}_{2\tau}(t_{n})),q_{h} \big)
		\\
		&\ + \big( \alpha\nabla\cdot \frac{\partial}{\partial t}\mathbf{u}_{2\tau}(t_{n}),q_{h} \big) - (\alpha\nabla\cdot \bar\partial_{t}\mathbf{u}_{h}^{n},q_{h}) + c(p_{2\tau}(t_{n}),q_{h}) -c(p_{h}^{n},q_{h})
		\\
		=&\ \big( c_{0}(\mathcal{R}_{h}\bar\partial_{t}p_{2\tau}(t_{n})-\bar\partial_{t}p_{2\tau}(t_{n})),q_{h} \big) + \big( c_{0}(\bar\partial_{t}p_{2\tau}(t_{n})-\frac{\partial}{\partial t}p_{2\tau}(t_{n})),q_{h} \big)
		\\
		&\ + \big( \alpha(\nabla\cdot \mathcal{Q}_{h}\bar\partial_{t}\mathbf{u}_{2\tau}(t_{n})-\nabla\cdot \bar\partial_{t}\mathbf{u}_{2\tau}(t_{n})),q_{h} \big)
		\\
		&\ + \big( \alpha(\nabla\cdot \bar\partial_{t}\mathbf{u}_{2\tau}(t_{n})-\nabla\cdot \frac{\partial}{\partial t}\mathbf{u}_{2\tau}(t_{n})),q_{h} \big) 
		+ (L\boldsymbol{\rho}(t_{n}),\nabla q_{h}) + (L\boldsymbol{\xi}_{h}^{n},\nabla q_{h})
		\\
		:=&\ (c_{0}\omega_{5}^{n},q_{h}) \!+\! (c_{0}\omega_{6}^{n},q_{h}) \!+\! (\alpha\omega_{7}^{n},q_{h}) \!+\! (\alpha\omega_{8}^{n},q_{h}) \!+\! (L\boldsymbol{\rho}(t_{n}),\nabla q_{h}) \!+\! (L\boldsymbol{\xi}_{h}^{n},\nabla q_{h}).
	\end{align*}
	Taking $\mathbf{D}_{h}=\boldsymbol{\xi}_{h}^{n}$, $\mathbf{B}_{h}=\boldsymbol{\psi}_{h}^{n}$, $\mathbf{v}_{h}=\bar\partial_{t}\boldsymbol{\gamma}_{h}^{n}$ and $q_{h}=\eta_{h}^{n}$ in above four equations, respectively, and then summing up, we obtain an error equation
	\begin{align*}
		&\ (\epsilon\bar\partial_{t}\boldsymbol{\xi}_{h}^{n},\boldsymbol{\xi}_{h}^{n}) + (\mu\bar\partial_{t}\boldsymbol{\psi}_{h}^{n},\boldsymbol{\psi}_{h}^{n}) + a(\bar\partial_{t}\boldsymbol{\gamma}_{h}^{n},\boldsymbol{\gamma}_{h}^{n}) + (c_{0}\bar\partial_{t}\eta_{h}^{n},\eta_{h}^{n}) + c(\eta_{h}^{n},\eta_{h}^{n}) 
		\\
		=&\ (\epsilon\boldsymbol{\omega}_{1}^{n},\boldsymbol{\xi}_{h}^{n}) 
		+ (\epsilon\boldsymbol{\omega}_{2}^{n},\boldsymbol{\xi}_{h}^{n}) 
		- (\sigma \boldsymbol{\rho}(t_{n}),\boldsymbol{\xi}_{h}^{n}) 
		- (\sigma \boldsymbol{\xi}_{h}^{n},\boldsymbol{\xi}_{h}^{n}) 
		+ (L\nabla \zeta(t_{n-1}),\boldsymbol{\xi}_{h}^{n}) 
		\\
		&\ + (L\nabla \eta_{h}^{n-1},\boldsymbol{\xi}_{h}^{n}) 
		+ (\mu\boldsymbol{\omega}_{3}^{n},\boldsymbol{\psi}_{h}^{n}) 
		+ (\mu\boldsymbol{\omega}_{4}^{n},\boldsymbol{\psi}_{h}^{n}) 
		- (\nabla\times \boldsymbol{\rho}(t_{n}),\boldsymbol{\psi}_{h}^{n})
		\\
		&\ + (c_{0}\omega_{5}^{n},\eta_{h}^{n}) + (c_{0}\omega_{6}^{n},\eta_{h}^{n}) + (\alpha\omega_{7}^{n},\eta_{h}^{n}) + (\alpha\omega_{8}^{n},\eta_{h}^{n})
            \\
		&\ + (L\boldsymbol{\rho}(t_{n}),\nabla \eta_{h}^{n}) + (L\boldsymbol{\xi}_{h}^{n},\nabla \eta_{h}^{n}).
	\end{align*}
	Similar to the proof of \eqref{e3.5.6}, we can also estimate $\tau(\bar\partial_{t}\boldsymbol{\xi}_{h}^{n},\boldsymbol{\xi}_{h}^{n})$, $\tau(\bar\partial_{t}\boldsymbol{\psi}_{h}^{n},\boldsymbol{\psi}_{h}^{n})$, $\tau a(\bar\partial_{t}\boldsymbol{\gamma}_{h}^{n},\boldsymbol{\gamma}_{h}^{n})$ and $\tau(\bar\partial_{t}\eta_{h}^{n},\eta_{h}^{n})$, and then utilizing assumption (H1) yields
	\begin{align*}
		&\ \frac{\epsilon}{2}\big(\|\boldsymbol{\xi}_{h}^{n}\|^{2}-\|\boldsymbol{\xi}_{h}^{n-1}\|^{2}\big) 
		+ \frac{\mu}{2}\big(\|\boldsymbol{\psi}_{h}^{n}\|^{2}-\|\boldsymbol{\psi}_{h}^{n-1}\|^{2}\big) 
		+ \frac{1}{2}\big(\|\boldsymbol{\gamma}_{h}^{n}\|_{a}^{2}-\|\boldsymbol{\gamma}_{h}^{n-1}\|_{a}^{2}\big) 
		\\
		&\ + \frac{c_{0}}{2}\big(\|\eta_{h}^{n}\|^{2}-\|\eta_{h}^{n-1}\|^{2}\big) 
		+ \tau \kappa \|\nabla \eta_{h}^{n}\|^{2}
		\\
		\leq&\ \tau (\epsilon\bar\partial_{t}\boldsymbol{\xi}_{h}^{n},\boldsymbol{\xi}_{h}^{n}) 
		+ \tau (\mu\bar\partial_{t}\boldsymbol{\psi}_{h}^{n},\boldsymbol{\psi}_{h}^{n}) 
		+ \tau a(\bar\partial_{t}\boldsymbol{\gamma}_{h}^{n},\boldsymbol{\gamma}_{h}^{n}) 
		+ \tau (c_{0}\bar\partial_{t}\eta_{h}^{n},\eta_{h}^{n}) 
		+ \tau c(\eta_{h}^{n},\eta_{h}^{n})
		\\
		\leq&\ \frac{\epsilon}{2}\tau \|\boldsymbol{\omega}_{1}^{n}\|^{2} 
		+ \frac{\epsilon}{2}\tau \|\boldsymbol{\xi}_{h}^{n}\|^{2} 
		+ \frac{\epsilon}{2}\tau \|\boldsymbol{\omega}_{2}^{n}\|^{2} 
		+ \frac{\epsilon}{2}\tau \|\boldsymbol{\xi}_{h}^{n}\|^{2} 
		+ \frac{\sigma}{2}\tau \|\boldsymbol{\rho}(t_{n})\|^{2} 
		+ \frac{\sigma}{2}\tau \|\boldsymbol{\xi}_{h}^{n}\|^{2} 
		\\
		&\ + \sigma\tau \|\boldsymbol{\xi}_{h}^{n}\|^{2} 
		+ \frac{\kappa}{2}\tau \|\nabla \zeta(t_{n-1})\|^{2} 
		+ \frac{\sigma}{2}\tau \|\boldsymbol{\xi}_{h}^{n}\|^{2} 
		+ \frac{\kappa}{2}\tau \|\nabla \eta_{h}^{n-1}\|^{2} 
		+ \frac{\sigma}{2}\tau \|\boldsymbol{\xi}_{h}^{n}\|^{2} 
		\\
		&\ + \frac{\mu}{2}\tau \|\boldsymbol{\omega}_{3}^{n}\|^{2} 
		+ \frac{\mu}{2}\tau \|\boldsymbol{\psi}_{h}^{n}\|^{2} 
		+ \frac{\mu}{2}\tau \|\boldsymbol{\omega}_{4}^{n}\|^{2} 
		+ \frac{\mu}{2}\tau \|\boldsymbol{\psi}_{h}^{n}\|^{2} 
		+ \frac{1}{2}\tau \|\nabla\times \boldsymbol{\rho}(t_{n})\|^{2} 
		\\
		&\ + \frac{1}{2}\tau \|\boldsymbol{\psi}_{h}^{n}\|^{2} 
		+ \frac{c_{0}}{2}\tau \|\omega_{5}^{n}\|^{2} + \frac{c_{0}}{2}\tau \|\eta_{h}^{n}\|^{2} + \frac{c_{0}}{2}\tau \|\omega_{6}^{n}\|^{2} + \frac{c_{0}}{2}\tau \|\eta_{h}^{n}\|^{2} 
		\\
		&\ + \frac{\alpha}{2}\tau \|\omega_{7}^{n}\|^{2} + \frac{\alpha}{2}\tau \|\eta_{h}^{n}\|^{2} + \frac{\alpha}{2}\tau \|\omega_{8}^{n}\|^{2} + \frac{\alpha}{2}\tau \|\eta_{h}^{n}\|^{2}
		\\
		&\ + \sigma\tau \|\boldsymbol{\rho}(t_{n})\|^{2} + \frac{\kappa}{4}\tau \|\nabla \eta_{h}^{n}\|^{2} 
		+ \sigma\tau \|\boldsymbol{\xi}_{h}^{n}\|^{2} + \frac{\kappa}{4}\tau \|\nabla \eta_{h}^{n}\|^{2}.
	\end{align*}
	Inductively for $n\geq 1$ in above inequality, we obtain
	\begin{equation}\label{e3.5.10}
		\begin{aligned}
			&\ \epsilon\|\boldsymbol{\xi}_{h}^{n}\|^{2} + \mu\|\boldsymbol{\psi}_{h}^{n}\|^{2} 
			+ \|\boldsymbol{\gamma}_{h}^{n}\|_{a}^{2} + c_{0}\|\eta_{h}^{n}\|^{2} 
			+ \kappa \tau \|\nabla \eta_{h}^{n}\|^{2} 
			\\
			\leq&\ \epsilon\|\boldsymbol{\xi}_{h}^{0}\|^{2} + \mu\|\boldsymbol{\psi}_{h}^{0}\|^{2} 
			+ \|\boldsymbol{\gamma}_{h}^{0}\|_{a}^{2} + c_{0}\|\eta_{h}^{0}\|^{2} 
			+ \kappa \tau \|\nabla \eta_{h}^{0}\|^{2} 
			\\
			&\ + \epsilon\sum_{j=1}^{n}\tau\|\boldsymbol{\omega}_{1}^{j}\|^{2} 
			\!+\! \epsilon\sum_{j=1}^{n}\tau\|\boldsymbol{\omega}_{2}^{j}\|^{2}
			\!+\! 3\sigma\sum_{j=1}^{n}\tau\|\boldsymbol{\rho}(t_{j})\|^{2} 
			\!+\! \kappa\sum_{j=1}^{n}\tau\|\nabla \zeta(t_{j-1})\|^{2} 
			\\
			&\ + \mu\sum_{j=1}^{n}\tau\|\boldsymbol{\omega}_{3}^{j}\|^{2} 
			+ \mu\sum_{j=1}^{n}\tau\|\boldsymbol{\omega}_{4}^{j}\|^{2} 
			+ \sum_{j=1}^{n}\tau\|\nabla\times \boldsymbol{\rho}(t_{j})\|^{2}
			\\
			&\ + c_{0}\sum_{j=1}^{n}\tau\|\omega_{5}^{j}\|^{2} + c_{0}\sum_{j=1}^{n}\tau\|\omega_{6}^{j}\|^{2} 
			+ \alpha\sum_{j=1}^{n}\tau\|\omega_{7}^{j}\|^{2} + \alpha\sum_{j=1}^{n}\tau\|\omega_{8}^{j}\|^{2} 
			\\
			&\ + (2\epsilon\!+\!7\sigma)\sum_{j=1}^{n}\tau\|\boldsymbol{\xi}_{h}^{j}\|^{2} 
			\!+\! (2\mu\!+\!1)\sum_{j=1}^{n}\tau\|\boldsymbol{\psi}_{h}^{j}\|^{2} 
			\!+\! 2(c_{0}\!+\!\alpha)\sum_{j=1}^{n}\tau\|\eta_{h}^{j}\|^{2}.
		\end{aligned}
	\end{equation}
	From Lemma \ref{l3.5.2} and \eqref{e3.5.1}, it follows that
	\begin{align*}
		\|\boldsymbol{\xi}_{h}^{0}\| =&\ \|\mathcal{I}_{h}\mathbf{E}_{0}-\mathbf{E}_{h}^{0}\| 
		\leq C\|\mathcal{I}_{h}\mathbf{E}_{0}-\mathbf{E}_{0}\| + C\|\mathbf{E}_{0}-\mathcal{P}_{h}\mathbf{E}_{0}\| 
		\\
		\leq&\ Ch\|\mathbf{E}_{0}\|_{2} + Ch^{2}\|\mathbf{E}_{0}\|_{2} \leq Ch,
		\\
		\|\boldsymbol{\psi}_{h}^{0}\| =&\ \|\mathcal{P}_{h}\mathbf{H}_{0}-\mathbf{H}_{h}^{0}\| = 0, 
		\\
		\|\boldsymbol{\gamma}_{h}^{0}\|_{a} =&\ \|\mathcal{Q}_{h}\mathbf{u}_{0}-\mathbf{u}_{h}^{0}\|_{a} \leq C\|\mathcal{Q}_{h}\mathbf{u}_{0}-\mathbf{u}_{h}^{0}\|_{1}
		\\
		\leq&\ C\|\mathcal{Q}_{h}\mathbf{u}_{0}-\mathbf{u}_{0}\|_{1} + C\|\mathbf{u}_{0}-\mathcal{P}_{h}\mathbf{u}_{0}\|_{1}
		\\
		\leq&\ Ch\|\mathbf{u}_{0}\|_{2} + Ch^{2}\|p_{0}\|_{2} + Ch\|\mathbf{u}_{0}\|_{2} \leq Ch,
		\\
		\|\eta_{h}^{0}\| =&\ \|\mathcal{R}_{h}p_{0}-p_{h}^{0}\| \leq \|\mathcal{R}_{h}p_{0}-p_{0}\|+\|p_{0}-\mathcal{P}_{h}p_{0}\| 
		\\
		\leq&\ Ch^{2}\|p_{0}\|_{2} + Ch^{2}\|p_{0}\|_{2} \leq Ch^{2},
		\\
		\|\nabla \eta_{h}^{0}\| =&\ \|\nabla (\mathcal{R}_{h}p_{0}-p_{h}^{0})\| \leq \|\nabla (\mathcal{R}_{h}p_{0}-p_{0})\|+\|\nabla (p_{0}-\mathcal{P}_{h}p_{0})\| 
		\\
		\leq&\ Ch\|p_{0}\|_{2} + Ch\|p_{0}\|_{2} \leq Ch.
	\end{align*}
	Direct computation yields
	\begin{equation*}
		\boldsymbol{\omega}_{1}^{j}= \mathcal{I}_{h}\bar\partial_{t}\mathbf{E}_{1\tau}(t_{j})-\bar\partial_{t}\mathbf{E}_{1\tau}(t_{j}) 
		=\frac{1}{\tau}\int_{t_{j-1}}^{t_{j}}{\big(\mathcal{I}_{h}\frac{\partial}{\partial t} \mathbf{E}_{1\tau}(\theta)-\frac{\partial}{\partial t}\mathbf{E}_{1\tau}(\theta)\big)}{\, \mathrm{d}\theta}.
	\end{equation*}
	According to Cauchy--Schwarz inequality and Lemma \ref{l3.5.2}, we get
	\begin{align*}
		\|\boldsymbol{\omega}_{1}^{j}\|^{2}
		=&\ \int_{\Omega}{\Big( \frac{1}{\tau} \int_{t_{j-1}}^{t_{j}}{\big(\mathcal{I}_{h}\frac{\partial}{\partial t} \mathbf{E}_{1\tau}(\theta)-\frac{\partial}{\partial t} \mathbf{E}_{1\tau}(\theta)\big)}{\, \mathrm{d}\theta} \Big)^{2}}{\, \mathrm{d}\mathbf{x}}
		\\
		\leq&\ \frac{1}{\tau^{2}}\int_{\Omega}{\Big( \int_{t_{j-1}}^{t_{j}}{1^{2}}{\, \mathrm{d}\theta} \int_{t_{j-1}}^{t_{j}}\big(\mathcal{I}_{h}\frac{\partial}{\partial t} \mathbf{E}_{1\tau}(\theta)-\frac{\partial}{\partial t} \mathbf{E}_{1\tau}(\theta)\big)^{2}{\, \mathrm{d}\theta} \Big)}{\, \mathrm{d}\mathbf{x}}
		\\
		\leq&\ \frac{1}{\tau}\int_{t_{j-1}}^{t_{j}}{\|\mathcal{I}_{h}\frac{\partial}{\partial t} \mathbf{E}_{1\tau}(\theta)-\frac{\partial}{\partial t} \mathbf{E}_{1\tau}(\theta)\|^{2}}{\, \mathrm{d}\theta}
		\\
		\leq&\ C\frac{1}{\tau} h^{2} \int_{t_{j-1}}^{t_{j}}{\|\frac{\partial}{\partial t} \mathbf{E}_{1\tau}(\theta)\|_{2}^{2}}{\, \mathrm{d}\theta}.
	\end{align*}
	Similarly, applying Cauchy--Schwarz inequality, \eqref{e3.5.1} and Lemma \ref{l3.5.2}, we obtain
	\begin{align*}
		\|\boldsymbol{\omega}_{3}^{j}\|^{2} \leq&\ \frac{1}{\tau}\int_{t_{j-1}}^{t_{j}}{\!\!\|\mathcal{P}_{h}\frac{\partial}{\partial t} \mathbf{H}_{1\tau}(\theta)\!-\!\frac{\partial}{\partial t} \mathbf{H}_{1\tau}(\theta)\|^{2}}{\, \mathrm{d}\theta} 
		\leq C\frac{1}{\tau} h^{2} \int_{t_{j-1}}^{t_{j}}{\!\!\|\frac{\partial}{\partial t} \mathbf{H}_{1\tau}(\theta)\|_{1}^{2}}{\, \mathrm{d}\theta},
		\\
		\|\omega_{5}^{j}\|^{2} \leq&\ \frac{1}{\tau}\int_{t_{j-1}}^{t_{j}}{\|\mathcal{R}_{h}\frac{\partial}{\partial t} p_{2\tau}(\theta)-\frac{\partial}{\partial t} p_{2\tau}(\theta)\|^{2}}{\, \mathrm{d}\theta} 
		\leq C\frac{1}{\tau} h^{4} \int_{t_{j-1}}^{t_{j}}{\|\frac{\partial}{\partial t} p_{2\tau}(\theta)\|_{2}^{2}}{\, \mathrm{d}\theta},
		\\
		\|\omega_{7}^{j}\|^{2} \leq&\ \frac{1}{\tau}\int_{t_{j-1}}^{t_{j}}{\|\nabla\cdot \big(\mathcal{Q}_{h}\frac{\partial}{\partial t} \mathbf{u}_{2\tau}(\theta)-\frac{\partial}{\partial t} \mathbf{u}_{2\tau}(\theta)\big)\|^{2}}{\, \mathrm{d}\theta} 
		\\
		\leq&\ \frac{1}{\tau}\int_{t_{j-1}}^{t_{j}}{\|\mathcal{Q}_{h}\frac{\partial}{\partial t} \mathbf{u}_{2\tau}(\theta)-\frac{\partial}{\partial t} \mathbf{u}_{2\tau}(\theta)\|_{1}^{2}}{\, \mathrm{d}\theta}
		\\
		\leq&\ C\frac{1}{\tau} h^{2} \int_{t_{j-1}}^{t_{j}}{\|\frac{\partial}{\partial t} \mathbf{u}_{2\tau}(\theta)\|_{2}^{2}}{\, \mathrm{d}\theta} + C\frac{1}{\tau} h^{4} \int_{t_{j-1}}^{t_{j}}{\|\frac{\partial}{\partial t} p_{2\tau}(\theta)\|_{2}^{2}}{\, \mathrm{d}\theta}.
	\end{align*}
	According to Taylor expansion, we obtain
	\begin{equation*}
		\boldsymbol{\omega}_{2}^{j}= \bar\partial_{t}\mathbf{E}_{1\tau}(t_{j})-\frac{\partial}{\partial t}\mathbf{E}_{1\tau}(t_{j}) 
		=\frac{1}{\tau}\int_{t_{j-1}}^{t_{j}}{(t_{j-1}-\theta)\frac{\partial^{2}}{\partial t^{2}} \mathbf{E}_{1\tau}(\theta)}{\, \mathrm{d}\theta}.
	\end{equation*}
	Utilizing Cauchy--Schwarz inequality yields
	\begin{align*}
		\|\boldsymbol{\omega}_{2}^{j}\|^{2}
		&= \int_{\Omega}{\Big( \frac{1}{\tau} \int_{t_{j-1}}^{t_{j}}{(t_{j-1}-\theta)\frac{\partial^{2}}{\partial t^{2}} \mathbf{E}_{1\tau}(\theta)}{\, \mathrm{d}\theta} \Big)^{2}}{\, \mathrm{d}\mathbf{x}}
		\\
		&\leq \frac{1}{\tau^{2}}\int_{\Omega}{\Big( \int_{t_{j-1}}^{t_{j}}{(t_{j-1}-\theta)^{2}}{\, \mathrm{d}\theta} \int_{t_{j-1}}^{t_{j}}{\big(\frac{\partial^{2}}{\partial t^{2}} \mathbf{E}_{1\tau}(\theta)\big)^{2}}{\, \mathrm{d}\theta} \Big)}{\, \mathrm{d}\mathbf{x}}
		\\
		&\leq C\tau\int_{t_{j-1}}^{t_{j}}{\|\frac{\partial^{2}}{\partial t^{2}} \mathbf{E}_{1\tau}(\theta)\|^{2}}{\, \mathrm{d}\theta}.
	\end{align*}
	In a similar way, we have
	\begin{align*}
		\|\boldsymbol{\omega}_{4}^{j}\|^{2} \leq&\ C\tau\int_{t_{j-1}}^{t_{j}}{\|\frac{\partial^{2}}{\partial t^{2}} \mathbf{H}_{1\tau}(\theta)\|^{2}}{\, \mathrm{d}\theta},
		\\
		\|\omega_{6}^{j}\|^{2} \leq&\ C\tau\int_{t_{j-1}}^{t_{j}}{\|\frac{\partial^{2}}{\partial t^{2}} p_{2\tau}(\theta)\|^{2}}{\, \mathrm{d}\theta},
		\\
		\|\omega_{8}^{j}\|^{2} \leq&\ C\tau\int_{t_{j-1}}^{t_{j}}{\|\nabla\cdot \frac{\partial^{2}}{\partial t^{2}} \mathbf{u}_{2\tau}(\theta)\|^{2}}{\, \mathrm{d}\theta}
		\leq C\tau\int_{t_{j-1}}^{t_{j}}{\|\frac{\partial^{2}}{\partial t^{2}} \mathbf{u}_{2\tau}(\theta)\|_{1}^{2}}{\, \mathrm{d}\theta}
		\\
		\leq&\ C\tau\int_{t_{j-1}}^{t_{j}}{\|\frac{\partial^{2}}{\partial t^{2}} p_{2\tau}(\theta)\|^{2}}{\, \mathrm{d}\theta}.
	\end{align*}
	Applying Lemma \ref{l3.5.2}, we arrive at
	\begin{align*}
		\|\boldsymbol{\rho}(t_{j})\|^{2} =&\ \|\mathbf{E}_{1\tau}(t_{j})-\mathcal{I}_{h}\mathbf{E}_{1\tau}(t_{j})\|^{2} 
		\leq Ch^{2}\|\mathbf{E}_{1\tau}(t_{j})\|_{2}^{2},
		\\
		\|\nabla \zeta(t_{j-1})\|^{2} =&\ \|\nabla (p_{2\tau}(t_{j-1})-\mathcal{R}_{h}p_{2\tau}(t_{j-1}))\|^{2} 
		\leq Ch^{2}\|p_{2\tau}(t_{j-1})\|_{2}^{2},
		\\
		\|\nabla\times \boldsymbol{\rho}(t_{j})\|^{2} =&\ \|\nabla\times (\mathbf{E}_{1\tau}(t_{j})-\mathcal{I}_{h}\mathbf{E}_{1\tau}(t_{j}))\|^{2} 
		\leq Ch^{2}\|\mathbf{E}_{1\tau}(t_{j})\|_{2}^{2}.
	\end{align*}
	Combining with \eqref{e3.5.10} and Lemma \ref{l3.4.5}, we get
	\begin{align*}
		&\, \|\boldsymbol{\xi}_{h}^{n}\|^{2} + \|\boldsymbol{\psi}_{h}^{n}\|^{2} 
		+ \|\boldsymbol{\gamma}_{h}^{n}\|_{a}^{2} + \|\eta_{h}^{n}\|^{2} 
		\\
		\leq&\, Ch^{2} \!+\! Ch^{2} \!+\! Ch^{4} \!+\! C\tau h^{2} 
		\!+\! C h^{2} \!\! \int_{0}^{t_{n}}{\!\!\|\frac{\partial}{\partial t} \mathbf{E}_{1\tau}(\theta)\|_{2}^{2}}{\mathrm{d}\theta} 
            \!+\! C\tau^{2} \!\! \int_{0}^{t_{n}}{\!\!\|\frac{\partial^{2}}{\partial t^{2}} \mathbf{E}_{1\tau}(\theta)\|^{2}}{\mathrm{d}\theta} 
            \\
		&\, + C h^{2} \sum_{j=1}^{n}\tau \|\mathbf{E}_{1\tau}(t_{j})\|_{2}^{2} 
            \!+\! C h^{2} \sum_{j=1}^{n}\tau \|p_{2\tau}(t_{j-1})\|_{2}^{2} 
            \!+\! C h^{2} \!\! \int_{0}^{t_{n}}{\!\|\frac{\partial}{\partial t} \mathbf{H}_{1\tau}(\theta)\|_{1}^{2}}{\, \mathrm{d}\theta}
		\\
		&\, + C\tau^{2} \!\! \int_{0}^{t_{n}}{\!\|\frac{\partial^{2}}{\partial t^{2}} \mathbf{H}_{1\tau}(\theta)\|^{2}}{\mathrm{d}\theta} 
		\!+\! C h^{2} \sum_{j=1}^{n}\tau \|\mathbf{E}_{1\tau}(t_{j})\|_{2}^{2} 
		\!+\! Ch^{4} \!\! \int_{0}^{t_{n}}{\|\frac{\partial}{\partial t} p_{2\tau}(\theta)\|_{2}^{2}}{\mathrm{d}\theta} 
		\\
		&\, +\! C\tau^{2} \!\! \int_{0}^{t_{n}}{\!\!\!\|\frac{\partial^{2}}{\partial t^{2}} p_{2\tau}(\theta)\|^{2}}{\mathrm{d}\theta} 
		\!+\! \big( Ch^{2} \!\! \int_{0}^{t_{n}}{\!\!\!\|\frac{\partial}{\partial t} \mathbf{u}_{2\tau}(\theta)\|_{2}^{2}}{\mathrm{d}\theta} 
		\!+\! Ch^{4} \!\! \int_{0}^{t_{n}}{\!\!\!\|\frac{\partial}{\partial t} p_{2\tau}(\theta)\|_{2}^{2}}{\mathrm{d}\theta} \big)
		\\
		&\, + C\tau^{2} \int_{0}^{t_{n}}{\|\frac{\partial^{2}}{\partial t^{2}} p_{2\tau}(\theta)\|^{2}}{\, \mathrm{d}\theta}
		+ C \sum_{j=1}^{n}\tau \big( \|\boldsymbol{\xi}_{h}^{j}\|^{2} + \|\boldsymbol{\psi}_{h}^{j}\|^{2} + \|\eta_{h}^{j}\|^{2} \big)
		\\
		\leq&\, C (\tau^{2} + h^{2}) + C \sum_{j=1}^{n}\tau \big( \|\boldsymbol{\xi}_{h}^{j}\|^{2} + \|\boldsymbol{\psi}_{h}^{j}\|^{2} + \|\eta_{h}^{j}\|^{2} \big).
	\end{align*}
	By discrete Gronwall's lemma, we obtain
	\begin{equation*}
		\|\boldsymbol{\xi}_{h}^{n}\|^{2} + \|\boldsymbol{\psi}_{h}^{n}\|^{2} 
		+ \|\boldsymbol{\gamma}_{h}^{n}\|_{a}^{2} + \|\eta_{h}^{n}\|^{2} 
		\leq C (\tau^{2} + h^{2}) e^{C\sum_{j=1}^{n}\tau} \leq C (\tau^{2} + h^{2}),
	\end{equation*}
	which leads to
	\begin{equation}\label{e3.5.11}
		\|\boldsymbol{\xi}_{h}^{n}\| + \|\boldsymbol{\psi}_{h}^{n}\| 
		+ \|\boldsymbol{\gamma}_{h}^{n}\|_{1} + \|\eta_{h}^{n}\| \leq C (\tau + h).
	\end{equation}
	From \eqref{e3.5.9}, \eqref{e3.5.11} and Theorem \ref{t3.4.7}, it follows that
	\begin{equation*}
		\|\mathbf{E}(t_{n})-\mathbf{E}_{h}^{n}\| + \|\mathbf{H}(t_{n})-\mathbf{H}_{h}^{n}\| + \|\mathbf{u}(t_{n})-\mathbf{u}_{h}^{n}\|_{1} + \|p(t_{n})-p_{h}^{n}\|  \leq C(\tau+h),
	\end{equation*}
	which completes the proof.
\end{proof}

\section{Numerical experiments}\label{s3.6}
Let $\Omega=[0, 1]^{3}$, $T=0.1$ and fix physical parameters $\epsilon=1$, $\sigma=2$, $L=1$, $\mu=1$, $\lambda=2$, $G=1$, $\alpha=1$, $c_{0}=1$ and $\kappa=2$. Set time step-size $\tau=(1/20)^{2}$ and take uniform tetrahedral partition for $\Omega$ with mesh size $h$. We employ the FEniCS \cite{AlBlHaJoKeLoRiRiRoWe2015,LoMaWe2012} computing platform to carry out numerical experiments. 

\begin{example}\label{eg3.6.1}
	The initial value $(\mathbf{E}_{0},\mathbf{H}_{0},\mathbf{u}_{0},p_{0})$ and right-hand functions $\mathbf{j}$, $\mathbf{f}$, $g$ are chosen such that the exact solution to \eqref{e3.2.1}--\eqref{e3.2.3} reads
	\begin{align*}
		\mathbf{E} =
		\left(
		\begin{array}{c}
			\sin(\pi x)\sin(\pi y)\sin(\pi z)\sin(t)
			\\
			\sin(\pi x)\sin(\pi y)\sin(\pi z)\sin(t)
			\\
			\sin(\pi x)\sin(\pi y)\sin(\pi z)\sin(t)
		\end{array}
		\right),
	\end{align*}
	\begin{align*}
		\mathbf{H} =
		\left(
		\begin{array}{c}
			\pi(\sin(\pi x)\cos(\pi y)\sin(\pi z)-\sin(\pi x)\sin(\pi y)\cos(\pi z))\cos(t)
			\\
			\pi(\sin(\pi x)\sin(\pi y)\cos(\pi z)-\cos(\pi x)\sin(\pi y)\sin(\pi z))\cos(t)
			\\
			\pi(\cos(\pi x)\sin(\pi y)\sin(\pi z)-\sin(\pi x)\cos(\pi y)\sin(\pi z))\cos(t)
		\end{array}
		\right),
	\end{align*}
	\begin{align*}
		\mathbf{u} =
		\left(
		\begin{array}{c}
			\sin(\pi x)\sin(\pi y)\sin(\pi z)e^{-t}
			\\
			\sin(\pi x)\sin(\pi y)\sin(\pi z)e^{-t}
			\\
			\sin(\pi x)\sin(\pi y)\sin(\pi z)e^{-t}
		\end{array}
		\right),
	\end{align*}
	\begin{equation*}
		p = \sin(\pi x)\sin(\pi y)\sin(\pi z)e^{-t}.
	\end{equation*}
\end{example}
	Here, $\mathbf{E}$ and $\mathbf{H}$ are periodic in $t$. 
	We implement numerical computations with splitting backward Euler FEM \eqref{e3.5.2} and \eqref{e3.5.3} for $h= \frac{1}{4},\frac{1}{8},\frac{1}{12},\frac{1}{15},\frac{1}{18}$, respectively. The corresponding errors and convergence orders are listed in Table \ref{tab3.6.1}. We observe that the errors for $\mathbf{u}$ and $p$ in $L^{2}$-norm exhibit extra convergence orders comparing with the theoretical results in Theorem \ref{t3.5.3}. We also apply a standard FEM to carry out numerical experiments \cite{HuMe2022} to compare with our method. The errors of our method and standard FEM in log-log scale are shown in Figure \ref{fig3.6.1}, which indicates that both of these two methods have the same convergence order. Table \ref{tab3.6.2} shows that the splitting backward Euler FEM improves the computational efficiency.

	\begin{table}[H]
		\belowrulesep=0pt
		\aboverulesep=0pt
		\renewcommand{\arraystretch}{1.2}
		\setlength{\arrayrulewidth}{0.1mm}
		\centering
		\caption{Errors and convergence orders of splitting numerical solution.}
		\label{tab3.6.1}
		\begin{tabular}{@{}c|cc|cc|cc@{}}
			\toprule
			$h$ & $\|\mathbf{E}(t_{n})\!-\!\mathbf{E}_{h}^{n}\|$ & Order & $\|\mathbf{H}(t_{n})\!-\!\mathbf{H}_{h}^{n}\|$ & Order & - & - 
			\\
			\midrule
			1/4 & 3.5223E-02 & - & 8.4312E-01 & - & - & - 
			\\
			1/8 & 1.7263E-02 & 1.0292 & 4.2855E-01 & 0.9763 & - & - 
			\\
			1/12 & 1.1197E-02 & 1.0678 & 2.8632E-01 & 0.9947 & - & - 
			\\
			1/15 & 8.8741E-03 & 1.0418 & 2.2915E-01 & 0.9981 & - & - 
			\\
			1/18 & 7.3633E-03 & 1.0236 & 1.9098E-01 & 0.9993 & - & - 
			\\
			\midrule[0.7pt]
			$h$ & $\|\mathbf{u}(t_{n})\!-\!\mathbf{u}_{h}^{n}\|$ & Order & $\|\mathbf{u}(t_{n})\!-\!\mathbf{u}_{h}^{n}\|_{1}$ & Order & $\|p(t_{n})\!-\!p_{h}^{n}\|$ & Order
			\\
			\midrule[0.2pt]
			1/4 & 1.1322E-01 & - & 1.4423E-00 & - & 8.0152E-02 & - 
			\\
			1/8 & 2.9383E-02 & 1.9460 & 7.5397E-01 & 0.9358 & 2.2321E-02 & 1.8443
			\\
			1/12 & 1.3135E-02 & 1.9857 & 5.0650E-01 & 0.9812 & 1.0114E-02 & 1.9523
			\\
			1/15 & 8.4201E-03 & 1.9928 & 4.0608E-01 & 0.9903 & 6.5009E-03 & 1.9808
			\\
			1/18 & 5.8522E-03 & 1.9954 & 3.3879-01 & 0.9936 & 4.5192E-03 & 1.9942
			\\
			\bottomrule
		\end{tabular}
	\end{table}
	
	\begin{figure}[H]
		\centering   % ,trim=11 2 30 18,clip
		\mbox{ \includegraphics [width=0.48\textwidth,trim=0 0 0 0,clip]{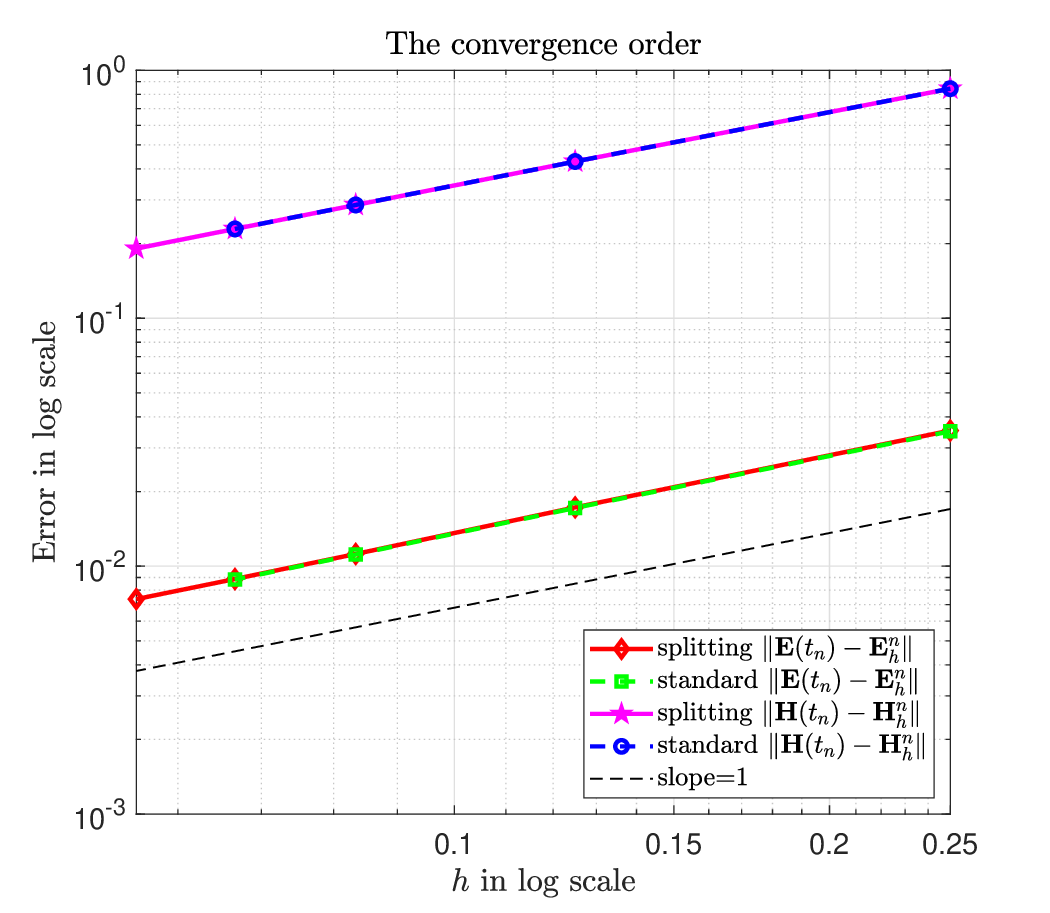}
		\includegraphics [width=0.48\textwidth,trim=0 0 0 0,clip]{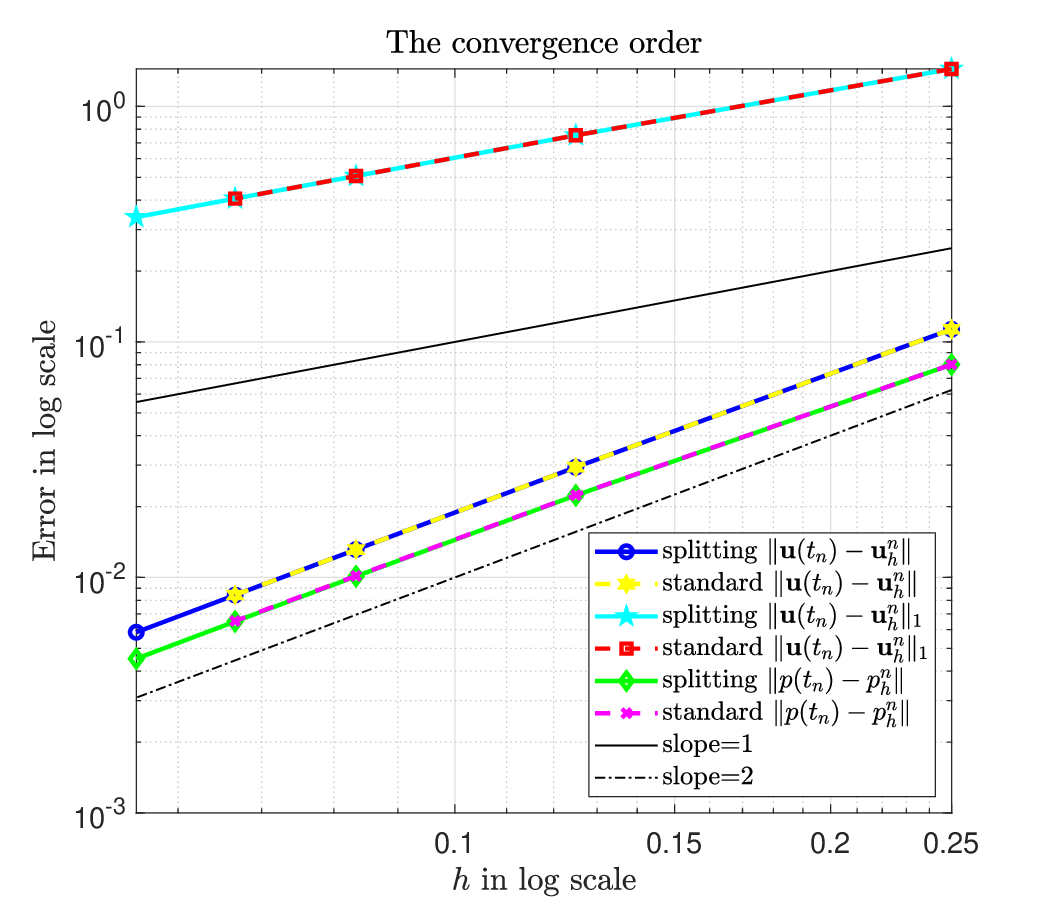} }
		\caption{Errors in log-log scale.}
		\label{fig3.6.1}
	\end{figure}
	
	\begin{table}[H]
		\belowrulesep=0pt
		\aboverulesep=0pt
		\renewcommand{\arraystretch}{1.2}
		\centering
		\caption{Comparison of computational complexity.}
		\label{tab3.6.2}
		\begin{tabular}{@{}|c|c|c|@{}}
			\toprule
			\diagbox{\qquad \qquad \quad \ \ $h$}{run time (s)} & Our method & Standard FEM
			\\
			\midrule
			1/4 & 11 & 13
			\\
			1/8 & 92 & 153
			\\
			1/12 & 516 & 1374
			\\
			1/15 & 1745 & 7098
			\\
			\bottomrule
		\end{tabular}
	\end{table}

\section{Conclusions}\label{s3.7}

In this paper, we established a new well-posedness theory for the quasi-static electroporoelasticity model and developed a splitting fully discretized finite element method (FEM) approximation. We derived the convergence order in both temporal and spatial variables for the finite element approximation. The efficiency of the splitting scheme was validated through numerical experiments.  In future work, we aim to derive the second-order convergence for the pressure $p$ and displacement $\mathbf{u}$, as suggested by our numerical results. Additionally, we plan to investigate locking-free numerical approximations in the regime where the Lam\'{e} constant becomes large.

%\begin{acknowledgements}

%\end{acknowledgements}

% Authors must disclose all relationships or interests that 
% could have direct or potential influence or impart bias on 
% the work: 
%
\section*{Compliance with Ethical Standards}
The authors declare that there is no conflict of interest. Research does not involve human participants or animals. 
\section*{Data Availability Statement}
No data set is used in the reesearch

% BibTeX users please use one of
%\bibliographystyle{spbasic}      % basic style, author-year citations
\bibliographystyle{spmpsci}      % mathematics and physical sciences
\bibliography{electroporoelasticity_splitting_references}   % name your BibTeX data base

% Non-BibTeX users please use
%\begin{thebibliography}{}
%
% and use \bibitem to create references. Consult the Instructions
% for authors for reference list style.
%
%\bibitem{RefJ}
% Format for Journal Reference
%Author, Article title, Journal, Volume, page numbers (year)
% Format for books
%\bibitem{RefB}
%Author, Book title, page numbers. Publisher, place (year)
% etc
%\end{thebibliography}

\end{document}